\newtheorem{theorem}{Theorem}[section]
\newtheorem{proposition}[theorem]{Proposition}
\newtheorem{conjecture}[theorem]{Conjecture}
\newtheorem{lemma}[theorem]{Lemma}
\newtheorem{corollary}[theorem]{Corollary}
\newtheorem{definition}[theorem]{Definition}
\theoremstyle{remark}
\newtheorem{remark}[theorem]{Remark}
\theoremstyle{plain}
\newcommand\wt\widetilde
\newcommand\sheaf\mathcal
\newcommand\complex\mathscr
\newcommand{\exterior}[1]{\ensuremath{{\textstyle\bigwedge}^{\! #1 }}}
\newcommand\lto\longrightarrow
\newcommand\poly{\ensuremath{H^*_{\sheaf E}(X)}}
\newcommand\polybeta{\ensuremath{H^*_{\sheaf E_\beta}(\M\beta)}}
\newcommand\polypq[2]{\ensuremath{H^{#2}(X, \exterior {#1} \sheaf E^\vee)}}
\newcommand\Z[2]{\ensuremath{\mathfrak  Z^{(#1)}_{#2}}}
\newcommand\ZK[1]{\ensuremath{\mathfrak  Z^{K}_{#1}}}
\newcommand\Ss[2]{\ensuremath{{S_{#2}^{(#1)}}}}
\newcommand\mori[1]{\ensuremath{{\text{NE}}(#1)}}
\newcommand\moriz[1]{\ensuremath{{\text{NE}}(#1)_\ZZ}}
\newcommand\Sym{\ensuremath{\text{Sym}}}
\newcommand\Hom{\ensuremath{\text{Hom}}}
\newcommand\Pic{\ensuremath{\text{Pic}}}
\newcommand\PP{\ensuremath{\mathbb P}}
\newcommand\CC{\ensuremath{\mathbb C}}
\newcommand\ZZ{\ensuremath{\mathbb Z}}
\newcommand\RR{\ensuremath{\mathbb R}}
\newcommand\X{{\ensuremath{X}}}
\newcommand\M[1]{\ensuremath{X_{#1}}}
\newcommand\homo{S}
\begin{document}
\bibliographystyle{hep}

\parindent=0cm
\parskip=0.5cm

\title[A Mathematical Theory of Quantum Sheaf Cohomology]{A Mathematical 
Theory of Quantum Sheaf Cohomology}
\author{Ron Donagi}
\address{Department of Mathematics, University of Pennsylvania, Philadelphia, PA 19104}
\email{donagi@math.upenn.edu}
\author{Josh Guffin}
\address{Department of Mathematics, University of Pennsylvania, Philadelphia, PA 19104}
\email{guffin@math.upenn.edu}
\author{Sheldon Katz}
\address{Department of Mathematics, University of Illinois, Urbana, IL  61801}
\email{katz@math.uiuc.edu}
\author{Eric Sharpe}
\address{
Physics Department, Virginia Tech, Blacksburg, VA  24061}
\email{ersharpe@vt.edu}

\subjclass{Primary 32L10, 81T20; Secondary 14N35}
\date{\today}

\begin{abstract}
The purpose of this paper is to present a mathematical theory 
of the half-twisted $(0,2)$ gauged linear sigma model and its 
correlation functions
that agrees with and extends results from physics.
The theory is associated to a smooth projective toric variety $X$ 
and a deformation $\sheaf E$
of its tangent bundle $T_X$.  
It gives a quantum deformation of the cohomology ring of the exterior algebra of $\sheaf E^*$.
We prove that in the general case, the correlation functions are independent of `nonlinear' deformations. We derive quantum sheaf cohomology relations that correctly specialize to the ordinary quantum cohomology relations described by Batyrev in the special case $\sheaf E = T_X$.
\end{abstract}

\maketitle
\thispagestyle{empty}

\section{Introduction}

The gauged linear sigma model (GLSM) was introduced in \cite{Witten:1993yc}
as a quantum field theory that is closely related to the nonlinear sigma
model (NLSM), but 
easier to analyze for both (2,2) and (0,2) versions.
Quantum cohomology relations for the (2,2) GLSM were described
in \cite{Batyrev:1993} and elaborated on by \cite{Morrison:1994fr}.  
In this paper, we present a mathematical
theory for the (0,2) GLSM and derive analogous results.
To put this work in context, we give some background and motivation from 
physics before focusing on the mathematical formulation.  More details from 
the physics perspective are given in the paper \cite{Donagi:2011} by the
authors.  

The (0,2) NLSM is a physical theory associated to a
Calabi-Yau threefold $X$ and a vector bundle $\sheaf E$ on $X$
satisfying $c_1(\sheaf E)=0$ and the 
Green-Schwarz anomaly cancellation condition
$c_2(\sheaf E)=c_2(T_X)$.  The half-twisted
(0,2) NLSM (sometimes called the $A/2$ model) is a closely-related but
simpler theory that can be constructed in a much more general situation.

Consider a compact K\"ahler manifold $X$ 
with a holomorphic 
vector bundle $\sheaf E \to X$ 
satisfying the equalities
\begin{enumerate}[(i)]
   \item $c_1(\sheaf E) = c_1(T_X)$, \label{cond:c2}
   \item $c_2(\sheaf E) = c_2(T_X)$ \label{cond:det}
\end{enumerate}
in the cohomology of $X$.
As these conditions extend the usual Green-Schwarz anomaly
cancellation condition of heterotic string theory
we will call such a
bundle \emph{omalous} (i.e.\ not anomalous).
Given an omalous bundle $\sheaf E$ on  $X$, 
the half-twisted (0,2) NLSM can be defined as a physical
theory associated to maps from a
genus-zero Riemann surface $\Sigma$ to $X$, 
with fermions associated to
$\sheaf E$.  This quantum field theory possesses a ``quasi-topological
subsector''; a subalgebra of vertex operators conjectured to be independent of
the complex structure on $X$ and referred to  as the \emph{quantum
sheaf cohomology} of $\sheaf E \to X$ \cite{Adams:2003d,Adams:2005tc}.   The
operators
in the quasi-topological sector are in one-to-one correspondence with
the sheaf cohomology $\oplus_{p,q}\polypq p q$.

Ignoring quantum corrections, the product of operators corresponds to the
cup product of corresponding classes
\[
H^q(X, \exterior p \sheaf E^\vee) \otimes
H^{q'}(X, \exterior {p'} \sheaf E^\vee)  \mathop{\longrightarrow}^\cup
H^{q+q'}(X, \exterior {p+p'} \sheaf E^\vee),
\]
where the cup product $\cup$ refers to the usual cup product on cohomology
followed by the product in the exterior algebra of $\sheaf E^\vee$.
In the full quantum theory, instanton corrections 
modify the product, but  for now
we discuss the classical algebra, which we call the \emph{polymology} of
$\sheaf E$.  

\begin{definition}
  The \emph{polymology} of a vector bundle
   $\sheaf E$ is the associative algebra
   \begin{equation}
      \poly := \bigoplus_{p,q} \polypq p q
      \label{eq:classicalalgebra}
   \end{equation}
   equipped with the cup product.
\end{definition}
If 
$\sheaf E=T_X$, i.e.\ if the (0,2) 
theory is actually a (2,2) theory, then the polymology
is canonically isomorphic to the ordinary cohomology of $X$ by Hodge theory.

The polymology can be defined for any vector bundle, but
if $\sheaf E$ is omalous, a choice of isomorphism $\det
\sheaf E^\vee \simeq \omega_X$ induces an isomorphism $\psi:H^n(X,
\det \sheaf E^\vee) \simeq H^n(X, \omega_X)$, where $n$ is the dimension
of $X$. This isomorphism in turn induces a pairing
\begin{equation}
   (\alpha, \beta) = \int_X \psi(\alpha\cup\beta)
   \label{eq:polyPairing}
\end{equation}
satisfying $(\alpha\cup\beta, \gamma) = (\alpha, \beta \cup
\gamma)$, which is perfect by Serre duality:
\begin{equation}
   \begin{split}
      H^p(X, \exterior q \sheaf E^\vee)^\vee
      &\simeq H^{n-p}(X, \exterior q\sheaf E \otimes \omega_X)\\
      &\simeq H^{n-p}(X, \exterior q\sheaf E \otimes \exterior n\sheaf E^\vee)\\
      &\simeq H^{n-p}(X, \exterior {n-q}\sheaf E^\vee).\\
   \end{split}
   \label{eq:serreDuality}
\end{equation}
It follows easily that the polymology of an omalous vector bundle admits
the structure of a bigraded Frobenius algebra.

The classical correlation functions can be identified with the
pairing \eqref{eq:polyPairing}.
Note that as $H^n(X, \exterior n\sheaf E^\vee)$ is 
isomorphic to the complex numbers, any such isomorphism defines the trace in 
the Frobenius algebra structure.  However, this isomorphism is not canonical.  
We deal with this normalization issue by simply defining the classical
correlation functions to live in the one-dimensional vector space 
$H^n(X, \exterior n \sheaf E^\vee)$.  

Before discussing the \emph{quantum sheaf cohomology} of $(X,\sheaf
E)$, a brief discussion is in order about two relevant quantum field
theories: the (2,2) NLSM and the (2,2) GLSM.  A little later we will
discuss the (0,2) GLSM and the (0,2) NLSM.

Given a smooth projective variety $X$, the quantum corrections to any
of these quantum field theories can be computed perturbatively using a
compactification of the space of holomorphic maps
$f:\PP^1\to X$ with $f_*[\PP^1]=\beta$, for each $\beta\in
H_2(X,\ZZ)$, and performing an integration over this compactification.  
The (2,2) NLSM is well-understood mathematically as
ordinary quantum cohomology, with the appropriate compactification
being the moduli space $\overline{M}_{0,3}(X,\beta)$ of genus~0 stable
maps of class $\beta$.  If $X$ is a toric variety, there is the 
linear sigma model moduli space $X_\beta$, which is a toric
compactification used for the (2,2) GLSM
\cite{Witten:1993yc,Morrison:1994fr}, leading to a quantum cohomology
ring whose structure was described by Batyrev in \cite{Batyrev:1993}.
Either of these quantum cohomology
rings are deformations of the ordinary cohomology ring $H^*(X)$.
A comparison between Batyrev's quantum cohomology ring and the
ordinary quantum cohomology ring follows from \cite{givental:1998} and
is described in \cite{Cox:2000vi}.  These
two cohomology rings become identified after a change in variables 
(the \emph{mirror map}).  The Batyrev quantum cohomology ring is identical
to the usual quantum cohomology ring if $X$ is Fano.

The quantum sheaf cohomology described in this paper arises from the
half-twisted (0,2) GLSM, and extends Batyrev's quantum cohomology
ring.  We will also explain how the identical moduli space $X_\beta$ used in 
the (2,2) GLSM moduli space can also be used to describe the (0,2) GLSM,
independent of $\sheaf E$.

Quantum sheaf cohomology
is a quantum deformation of the Frobenius algebra
structure on the polymology of $(X,\sheaf E)$.  This is analogous to either 
of the two versions of the quantum cohomology of $X$
mentioned above.

Physics tells us from the half-twisted (0,2) NLSM that a quantum sheaf
cohomology ring is associated to $(X,\sheaf E)$ for any omalous vector
bundle $\sheaf E$ on $X$.  Unfortunately, a mathematically-precise
version of such a theory does not yet exist.  However, physical
arguments providing an approach to such a mathematical version are
given in \cite{Katz:2004nn}.  Furthermore, one can speculate that the
relevant compactification of the space of maps will be
$\overline{M}_{0,3}(X,\beta)$ as in the (2,2) NLSM, independent of the
choice of bundle $\sheaf E$.  One can also speculate that the GLSM and
NLSM versions of quantum sheaf cohomology will be identified by a
change of variables analogous to the mirror map.

We now describe the ingredients of 
the half-twisted (0,2) GLSM.  Although such theories
are more general, we will only describe them in the situation we
consider.  Let $X$ be a smooth projective toric variety, and let $\sheaf
E$ be a deformation of the tangent bundle $T_X$ arising from a
deformation of the toric Euler sequence (to be described in
(\ref{eq:Tpresentation}) below).  The half-twisted (0,2) GLSM is associated to
$(X,\sheaf E)$, and has a \emph{quasi-topological sector}
whose operators are generated by the symmetric
algebra on $H^2(X,\CC)$ and become isomorphic to $\poly$ in the
classical limit.  
In physics, the quasi-topological sector arises as the set of operators lying 
in the
kernel of the scalar supercharge whose holomorphic conformal weight
vanishes.  

One of our main results is the calculation of 
the classical polymology of $(X,\sheaf E)$ in
Theorem~\ref{thm:polymologyIsomorphism}.  The algebra $\poly$ is 
naturally a quotient
of this symmetric algebra, relating the GLSM to the NLSM.  
To each primitive collection
in the toric variety $X$ is associated a generator of the Stanley-Reisner
ideal $\mathrm{SR}(X,\sheaf E)$.  The classical polymology is the 
quotient of this symmetric algebra by $\mathrm{SR}(X,\sheaf E)$.

Since $X_\beta$ is itself
toric, the same result can be used to compute quantum corrections in all 
instanton sectors.  We introduce a direct system of polymologies in 
Section~\ref{sec:direct} which allows us to sum over the
instanton sectors and rigorously define the quantum sheaf cohomology ring
by abstracting the physical
notion of an operator.  For each primitive collection, a quantum deformation
of the corresponding Stanley-Reisner generators can be written down 
(\ref{eq:qcrelation}), as proposed in the physics literature.  
Our final main result is:

\noindent
{\bf Theorem \ref{thm:qcrelations}.}
The quantum cohomology relations (\ref{eq:qcrelation})
hold for all primitive collections $K$.

Here is an outline of the rest of the paper.

We begin Section~\ref{sec:toric} by recalling standard concepts and
notation from toric geometry.  Then we recall the toric Euler sequence
of a smooth toric variety $\X$, which gives a presentation of its
tangent bundle.  Deformations of this sequence are
presentations of deformations $\sheaf E$ of the tangent bundle, which
complete the input data needed to define the $(0,2)$ GLSM.  We then
introduce a generalized Koszul complex that plays a fundamental role
in our analysis.  We conclude the section with the computation of the
sheaf cohomology of certain $T$-invariant divisors, which enable us to
chase through exact cohomology sequences associated with the
generalized Koszul complex.

In Section~\ref{sec:polymology} we compute the polymology of
$(X,\sheaf E)$.  Let $W=H^2(\X,\CC)$ and let $K\subset \Sigma(1)$ be a
primitive collection, where $\Sigma(1)$ as usual denotes the edges of
the fan for $\X$.  By a diagram chase, we associate to each $K$ an element
$Q_K\in\mathrm{Sym}^kW$, where $k=|K|$.  We then define the
Stanley-Reisner ideal $\mathrm{SR}(X,\sheaf E)$ of $\sheaf E$ to be
the ideal in $\mathrm{Sym}^*W$ generated by the $Q_K$.  The main
result of Section~\ref{sec:polymology} is that the polymology of
$\sheaf E$ is isomorphic to the quotient of $\mathrm{Sym}^*W$ by the
Stanley-Reisner ideal of $\sheaf E$
(Theorem~\ref{thm:polymologyIsomorphism}).

In Section~\ref{sec:quantum} we describe the GLSM moduli space
$\X_\beta$ associated to an effective class $\beta\in H_2(\X,\ZZ)$ and
an induced vector bundle $\sheaf E_\beta$ on $\X_\beta$.  The
correlation functions are defined in terms of the polymology of
$(\X_\beta,\sheaf E_\beta)$.  We show that the polymology of
$(\X_\beta,\sheaf E_\beta)$ is a quotient of $\mathrm{Sym}^*W$ by an
ideal generated by powers of the $Q_K$ given by (\ref{eq:qbk}).  We
then introduce a direct system of polymologies that will allow us to
compare correlation functions for different $\beta$, and then define
the correlation functions after introducing {\em four-fermi terms\/}
in (\ref{eq:fourfermi}) that play a role for the $(0,2)$ GLSM similar
to that of the virtual fundamental class of Gromov-Witten theory.
Finally, we define the quantum sheaf cohomology ring abstractly and
then compute the quantum cohomology relations in
Theorem~\ref{thm:qcrelations}.  In \cite{McOrist:2008ji}, 
predictions were made for the image of the relations in a
localization of the ring (following a standard procedure in the
physics literature).  It is straightforward to verify that our relations 
descend to the relations of
\cite{McOrist:2008ji} in that localization.  See \cite{Donagi:2011} for 
details.

We now turn to toric geometry to elaborate on the set-up.

\section{The toric setting}
\label{sec:toric}

We start by fixing some notation associated to toric varieties.  A good
general reference for toric varieties is \cite{Cox:2010tv}.

Let $X = X_\Sigma$ be a smooth projective toric variety 
of dimension $n$ with fan
$\Sigma$ whose support lies in $N_\RR\simeq \RR^n$, where $N$ is the lattice
of one-parameter subgroups of the dense torus of $X$.  We will denote by
$M$ the lattice of characters dual to $N$, by $\Sigma(1)$ the set of
one-dimensional cones of the fan, and we will write $\bigoplus_\rho$ and
$\sum_\rho$ in place of $\bigoplus_{\rho \in \Sigma(1)}$ and $\sum_{\rho
\in \Sigma(1)}$, respectively.  To each $\rho \in \Sigma(1)$ is
associated a torus-invariant Weil divisor denoted $D_\rho$, a unique
minimal generator $v_\rho$ of the semigroup $N \cap \rho$, and a
canonical section $x_\rho \in H^0(\X, \sheaf O_\X(D_\rho))$.  These
canonical sections freely generate the homogeneous coordinate ring of $\X$:
\begin{equation}
   S:=\CC[x_\rho \; |\; \rho \in \Sigma(1)].
\end{equation}
The homogenous coordinate ring $S$ has a natural grading by $\Pic(\X)$, 
which assigns to $x_\rho$ the degree $[D_\rho]\in \Pic(\X)$.  

For each $T$-invariant Weil divisor $D= \sum_\rho a_\rho D_\rho$ we have a natural isomorphism 
\begin{equation}
S_{[D]} \simeq H^0(\X, \sheaf O_\X(D)),
\label{eq:homopiece}
\end{equation}  
where as usual, $S_{[D]}$ denotes the graded piece of $S$ of degree $[D]$.
To describe this isomorphism, we
associate to $D$ the polytope 
\[
\Delta_D = \{ m \in M_\RR \; | \; \langle m, v_\rho \rangle \geq - a_\rho
\text{ for all } \rho \in \Sigma(1)\}.
\]
Then the isomorphism (\ref{eq:homopiece}) is conveniently described by the 
identification of basis elements
\begin{equation}
\prod_\rho x_\rho^{\langle m, v_\rho\rangle + a_\rho}\leftrightarrow
\chi^m,
\label{eq:isobasis}
\end{equation}
where $m$ ranges over $\Delta_{D}\cap M$ and $\chi^m$ is the 
character associated
to $m\in M$, thought of as a meromorphic
function with at worst poles on $D$.
Note in particular that for each $\rho$ the trivial character $\chi^0$ (i.e.\ the constant
function 1) is the section of $\sheaf
O_\X(D_\rho)$ associated to 
$x_\rho$ via the isomorphism (\ref{eq:homopiece}) for $D=D_\rho$.

Since $\X$ is smooth and toric, the class group of Weil divisors, the
Picard group, and the integral cohomology are all isomorphic.  
We associate to each 
$m\in M$ the element $e_m\in\ZZ^{\Sigma(1)}$ defined by $e_m(\rho)=
\langle m,v_\rho\rangle$.
Then
$\Pic(\X)$ admits a presentation as
\begin{equation}
0 \to M \to \ZZ^{\Sigma(1)} \to \Pic(\X) \to 0,
\label{eq:pic}
\end{equation}
where the first non-trivial morphism is $m \mapsto e_m$ and the basis element of
$\ZZ^{\Sigma(1)}$ dual to $\rho$ maps to $[D_\rho]$.

Let $W=H^2(X,\CC)$.  Then the tangent bundle of 
$\X$ fits into a short exact sequence 
known as the toric Euler sequence:
\begin{equation}
   0 \longrightarrow \sheaf O_\X \otimes_\CC W^\vee 
\mathop{\longrightarrow}^{E_0}
   {\bigoplus}_\rho \mathcal
   O_\X(D_\rho) \longrightarrow T_\X \longrightarrow 0.
   \label{eq:Tpresentation}
\end{equation}
Thinking of $E_0$ as an element of $\bigoplus_\rho S_{[D_\rho]} \otimes W$,
the $\rho^\text{th}$ component of $E_0$ is $x_\rho \otimes [D_\rho]$.

Recall that a collection of edges $K \subset \Sigma(1)$ is a \emph{primitive
collection} if $K$ does not span any cone in $\Sigma$, but every proper
subcollection of $K$ does.  Equivalently, the intersection of the divisors
$D_\rho$ with $\rho\in K$ is empty, but the intersection of any proper subset
of these divisors is nonempty.
Following the presentation of
\cite[\S 8.1.2]{Cox:2000vi},  we
define two ideals in the homogeneous coordinate ring $\homo$: 
\begin{equation}
   \begin{split}
      P(\X)  & = \left ( \sum_{\rho} \langle m,
      v_\rho\rangle x_\rho \; \Big| \; m \in M\right ) \\
      SR(\X) & = \left ( \prod_{\rho \in K} x_\rho \;\Big|\; K \text{ a primitive collection of } \Sigma\right).
   \end{split}
   \label{eq:stanleyReisnerLinearEquivalence}
\end{equation}
The former is the ideal of linear equivalences, so that
\begin{equation}
\mathrm{Sym}(W) \simeq \homo /P(\X).
\label{eq:swlineq}
\end{equation}
The second ideal in (\ref{eq:stanleyReisnerLinearEquivalence}) is
known as the \emph{Stanley-Reisner} ideal of $\X$. 
It is well
known \cite{Fulton:1993tv,Oda:1988tv} that there is
an isomorphism of $\ZZ$-graded algebras
\begin{equation}
{ \homo }/\left({P(\X) + SR(\X)}\right)\simeq H^*(\X, \CC)
\label{eq:toriccohomology}
\end{equation}
induced by sending a generator $x_\rho$ of $S$ to $[D_\rho]$.

For later use, we recall the description of toric varieties as quotients.
There is a natural action of $G=\Hom(\Pic(X), \CC^*)$ on $\CC^{\Sigma(1)}$ 
following from the inclusion $G\subset \Hom(\ZZ^{\Sigma(1)},\CC^*)$ 
derived from (\ref{eq:pic}).  For each $\sigma\in\Sigma$ define
\begin{equation}
x^{\hat{\sigma}}=\prod_{\rho\not\in\sigma(1)}x_\rho
\label{eq:xsigmahat}
\end{equation}
and the \emph{irrelevant ideal} 
\begin{equation}
B(\Sigma)=\left( x^{\hat{\sigma}}\mid \sigma\in\Sigma\right).
\label{eq:irrelevant}
\end{equation}
Thinking of the $x_\rho$ as coordinate 
functions on $\CC^{\Sigma(1)}$, we define the subset
$Z(\Sigma)\subset \CC^{\Sigma(1)}$ as the vanishing locus of the
irrelevant ideal.  Then $Z(\Sigma)$ is $G$-invariant and
\begin{equation}
\X = \left(\CC^{\Sigma(1)} - Z(\Sigma)\right)/{G}.
\label{eq:quotientdescription}
\end{equation}
For later use, we note that it is well known that $Z(\Sigma)$ can be described in terms of
primitive collections.  If $K=\{\rho_1,\ldots,\rho_k\}$ is a primitive 
collection, let $L_K\subset \CC^{\Sigma(1)}$ be the linear subspace defined
by $x_{\rho_1}=\ldots=x_{\rho_k}=0$.  Then
\begin{equation}
Z(\Sigma)=\cup_K L_K,
\label{eq:zspc}
\end{equation}
where the union is taken over all primitive collections $K$.  The fan $\Sigma$
can also be recovered from the set of primitive collections as the set of cones
spanned by collections of edges that do not contain any primitive collection.  See 
\cite{Cox:2010tv} for example.

It will also be useful to note that $\Pic(X)$ can be recovered from $G$ as
\begin{equation}
\Pic(X)\simeq\Hom(G,\CC^*),
\label{eq:picg}
\end{equation}
by duality for finitely-generated abelian groups.

\subsection{Toric deformations of the tangent bundle.}

To define a half-twisted (0,2) GLSM, we need a presentation
of a vector bundle $\sheaf E$ obtained from 
(\ref{eq:Tpresentation}) by simply changing the map $E_0$:
\begin{equation}
   0 \longrightarrow \sheaf O_\X \otimes_\CC W^\vee 
\mathop{\longrightarrow}^{E}
   {\bigoplus}_\rho \mathcal
   O_\X(D_\rho) \longrightarrow \sheaf E \longrightarrow 0.
   \label{eq:Epresentation}
\end{equation}
Both the bundle and the presentation are required to define the GLSM.  
We will sometimes abuse terminology slightly by referring to the bundle 
$\sheaf E$ as a 
\emph{toric deformation of the tangent bundle\/}, but we always have a fixed
presentation (\ref{eq:Epresentation}) in mind.
Specifying a map $E$ is not sufficient; it is required
that the cokernel $\sheaf E$ of $E$ is locally free, or equivalently that
\begin{equation}
E^t:\oplus_\rho\sheaf O(-D_\rho)\to W\otimes\sheaf O_{\X}
\label{eq:elocfree}
\end{equation}
is surjective.

As with $E_0$, 
the map $E$ can be viewed as a section of $\oplus_\rho H^0(X,\sheaf O(D_\rho))
\otimes W=\oplus_\rho S_{[D_\rho]}\otimes W$.

The components $E_\rho$ of $E$ can be thought of as 
$W$-valued sections of $\sheaf O(D_\rho)$.
We will sometimes express these sections as
\[
E_\rho=\sum_{m\in\Delta_{D_\rho}\cap M}a_{\rho m}\chi^m,
\]
where $a_{\rho m}\in W$, or as
\[
E_\rho=\sum_{m\in\Delta_{D_\rho}\cap M}a_{\rho m}x_\rho\prod_{\rho'} 
x_{\rho'}^{\langle m,v_\rho'\rangle}
\]
using the identification (\ref{eq:isobasis}).

The terms $a_{\rho m}\chi^m$ with $x_\rho\prod_{\rho'} x_{\rho'}^{\langle
m,v_\rho'\rangle}$ a linear monomial in the homogeneous coordinates will 
be called a \emph{linear term}; the other terms will be called the 
\emph{nonlinear terms}.
A toric deformation of the tangent bundle containing only linear terms
will be called a \emph{linear deformation}.
Linear
deformations play a
significant role in physicists' analyses of quantum sheaf cohomology
\cite{Katz:2004nn, Guffin:2007mp, McOrist:2007kp, McOrist:2008ji}
in localized rings, and
we will see that they capture the essence of the polymology associated
to a toric deformation of the tangent bundle.

Throughout, we will make extensive use of a generalized Koszul 
complex associated to deformations of the toric Euler sequence. 
In order to simplify notation, put $Z=\oplus_\rho\sheaf O(-D_\rho)$ and
then for $0 \leq k$ and $0 \leq j \leq k$ define 
\[
\Z k j  := \exterior j Z \otimes \Sym^{k-j} W.
\]
Let $\sheaf E$ be a 
deformation of the tangent bundle in the preceding sense.  
The dual of the exact sequence
\eqref{eq:Epresentation} induces an injection 
$\exterior k \mathcal E^\vee \to \exterior k Z$
and maps $\alpha_j:\Z k j \to \Z k {j-1}$ defined as
\begin{equation}
   \alpha_j:
   (z_1 \wedge \cdots \wedge z_j)\otimes s \mapsto \sum_{\ell=1}^j (-1)^{\ell-1}
   (z_1 \wedge \cdots \wedge \hat z_\ell\wedge \cdots  \wedge
   z_j)\otimes \big[E^\vee(z_\ell) \odot s\big],
   \label{eq:koszulMaps}
\end{equation}
where $E$ is the injection in 
\eqref{eq:Epresentation} and $\odot$ is multiplication in $\Sym^*W$.
These maps may be arranged into an exact sequence
\begin{equation}
   0 \lto \exterior k \sheaf E^\vee \lto \Z k k \lto \Z k {k-1} \lto \cdots \lto \Z k 1 \lto \Z k 0 \lto 0.
   \label{eq:koszulType}
\end{equation}
Exactness follows since the maps in (\ref{eq:koszulType}) are natural and 
the analogous complex formed from a short exact sequence of vector spaces
is easily seen to be exact.

\subsection{A vanishing result for certain toric line bundles}

We will make extensive use of the line bundles
$\sheaf O_\X(D)$ associated to T-invariant divisors $D$ on $\X$,
and in particular
Theorem~\ref{thm:simp} below, which appears in
\cite{Demazure:1970} and is reproduced here for convenience.

Consider a Weil divisor $D = \sum_\rho a_\rho D_\rho$ and define
\[ \Sigma_{D,m} = \{\rho \in \Sigma(1) \; | \; \langle m, v_\rho \rangle < -a_\rho\}.\]
Then let $V_{D,m}$\footnote{We are following the notation of
\cite{Cox:2010tv}, adapting it slightly; our $V_{D,m}$ matches their 
$V_{D,m}^\text{simp}$.} 
be the union of polytopes in $N_\mathbb R$
\[
V_{D,m} = \mathop{\bigcup_{\sigma \in \Sigma}}_{\sigma(1) \subset
\Sigma_{D,m}} \text{Conv}(v_\rho \; |\;  \rho \in \sigma).
\]
Since $\sheaf O_\X(-D)$
is a torus-equivariant bundle, $H^j(\X,\sheaf O_\X(-D))$ decomposes as a 
direct sum of 
weight spaces $H^j(\X,\sheaf O_\X(-D))_m$ with $m \in M$.

\begin{theorem}[Proposition~6 of \cite{Demazure:1970}]
   \label{thm:simp}
   Let $D = \sum_\rho a_\rho D_\rho$ be a $T$-invariant 
Weil divisor on $\X$.  For
   $m\in M$ and $p\geq 0$,
   \[H^p(\X, \mathcal O_\X(D))_m \simeq \widetilde
   H^{p-1}(V_{D,m},\mathbb C).\]
\end{theorem}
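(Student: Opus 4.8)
The plan is to compute $H^p(\X,\sheaf O_\X(D))$ by \v{C}ech cohomology with respect to the standard affine cover $\mathfrak U=\{U_\sigma\mid\sigma\in\Sigma(n)\}$ by the charts of the $n$-dimensional cones, to isolate the weight-$m$ summand, and then to recognize the resulting complex as the relative simplicial cochain complex of a pair whose homotopy type is that of $V_{D,m}$. Since $\X$ is separated and each $U_\sigma$ is affine, \v{C}ech cohomology of the quasi-coherent sheaf $\sheaf O_\X(D)$ with respect to $\mathfrak U$ computes $H^*(\X,\sheaf O_\X(D))$, compatibly with the $M$-grading. For $I\subseteq\Sigma(n)$ one has $\bigcap_{\sigma\in I}U_\sigma=U_{\tau_I}$ with $\tau_I=\bigcap_{\sigma\in I}\sigma$ a common face of the $\sigma\in I$, and the weight-$m$ piece $\Gamma(U_{\tau_I},\sheaf O_\X(D))_m$ is spanned by $\chi^m$ (so one-dimensional) exactly when $\langle m,v_\rho\rangle\ge -a_\rho$ for every $\rho\in\tau_I(1)$, i.e.\ when $\tau_I(1)\cap\Sigma_{D,m}=\emptyset$, and is $0$ otherwise.

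Now $\tau_I(1)\cap\Sigma_{D,m}\ne\emptyset$ precisely when some $\rho\in\Sigma_{D,m}$ lies in every $\sigma\in I$, that is, when $I\subseteq\mathrm{St}(\rho):=\{\sigma\in\Sigma(n)\mid\rho\in\sigma\}$. Let $\Delta$ be the full simplex on the vertex set $\Sigma(n)$; it is the nerve of $\mathfrak U$, since every $U_\sigma$ contains the dense torus, so all finite intersections are nonempty. Let $\Delta'\subseteq\Delta$ be the subcomplex $\bigcup_{\rho\in\Sigma_{D,m}}2^{\mathrm{St}(\rho)}$, the union of the faces spanned by the stars of the rays in $\Sigma_{D,m}$; it is closed under passing to faces. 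The preceding paragraph identifies the weight-$m$ part of the \v{C}ech complex, differential included, with the relative simplicial cochain complex $C^\bullet(\Delta,\Delta';\CC)$: a $p$-simplex $I$ of $\Delta$ contributes $\CC$ unless $I\in\Delta'$, and the \v{C}ech coboundary is the simplicial one. Hence $H^p(\X,\sheaf O_\X(D))_m\simeq H^p(\Delta,\Delta';\CC)$, and since $\Delta$ is contractible the reduced long exact sequence of the pair collapses to $H^p(\Delta,\Delta';\CC)\simeq\wt H^{p-1}(\Delta';\CC)$ for all $p\ge 0$, using the convention $\wt H^{-1}(\emptyset)=\CC$ (which matches the classical fact that $\chi^m$ is a global section of $\sheaf O_\X(D)$ iff $\Sigma_{D,m}=\emptyset$).

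It then remains to show $\Delta'$ is homotopy equivalent to $V_{D,m}$. The simplices $2^{\mathrm{St}(\rho)}$, $\rho\in\Sigma_{D,m}$, form a closed cover of $\Delta'$, and for $\rho_1,\dots,\rho_k\in\Sigma_{D,m}$ the intersection $\bigcap_j 2^{\mathrm{St}(\rho_j)}$ is the simplex on $\bigcap_j\mathrm{St}(\rho_j)=\{\sigma\in\Sigma(n)\mid\rho_1,\dots,\rho_k\in\sigma\}$, which is a contractible simplex when $\rho_1,\dots,\rho_k$ span a cone of $\Sigma$ and is empty otherwise. By the nerve lemma, $\Delta'$ is homotopy equivalent to the nerve of this cover: the simplicial complex on vertex set $\Sigma_{D,m}$ whose faces are exactly the subsets spanning a cone of $\Sigma$. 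Since $\X$ is smooth, $\mathrm{Conv}(v_\rho\mid\rho\in\sigma)$ is a genuine $(|\sigma(1)|-1)$-simplex for each cone $\sigma$, and these simplices, for $\sigma(1)\subseteq\Sigma_{D,m}$, assemble into a geometric realization of that nerve, namely $V_{D,m}$. Hence $\wt H^{p-1}(\Delta';\CC)\simeq\wt H^{p-1}(V_{D,m};\CC)$, finishing the proof.

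I expect the genuine content, and the main obstacle, to be the nerve-lemma identification of $\Delta'$ with $V_{D,m}$ in the third paragraph, together with the careful matching of the $M$-graded \v{C}ech differential with the simplicial coboundary of the pair $(\Delta,\Delta')$ in the second — in particular getting the low-degree and reduced-cohomology bookkeeping right. One should also be mildly careful that the nerve lemma is applied to a \emph{closed} cover by subcomplexes, which is legitimate here because every intersection of the covering simplices is empty or contractible, and that it is smoothness of $\X$ that makes the geometric realization of the nerve agree exactly with the polyhedral complex $V_{D,m}$; the rest is routine.
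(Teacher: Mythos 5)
The paper does not actually prove this statement --- it is quoted from Demazure (Proposition~6 of that reference) and used as a black box --- so there is no in-paper argument to compare yours against. Your \v{C}ech argument is correct and is essentially the standard proof of this classical result (compare \S 9.1 of the Cox--Little--Schenck book the authors cite): the weight-$m$ part of the \v{C}ech complex for the cover by maximal-cone charts is exactly the relative cochain complex of the pair $(\Delta,\Delta')$, contractibility of the full simplex $\Delta$ converts $H^p(\Delta,\Delta')$ into $\widetilde H^{p-1}(\Delta')$ (with the convention $\widetilde H^{-1}(\emptyset)=\CC$ correctly covering the $p=0$ case), and the closed-cover nerve lemma identifies $\Delta'$ with $V_{D,m}$ up to homotopy. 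The only nonroutine ingredient is that nerve lemma for a closed cover by subcomplexes, and you have verified its hypothesis correctly: every nonempty intersection $\bigcap_j 2^{\mathrm{St}(\rho_j)}$ is a full simplex on $\{\sigma\in\Sigma(n)\mid \rho_1,\dots,\rho_k\in\sigma\}$, which is nonempty precisely when the $\rho_j$ span a cone (completeness and simpliciality of $\Sigma$ are what make this equivalence, and the identification of the realized nerve with the polyhedral complex $V_{D,m}$, go through).
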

Here $\widetilde H$ denotes the reduced cohomology.
Consider a subset $K \subset \Sigma(1)$ and set $D_K = \sum_{\rho \in K}
D_\rho$.

\begin{lemma}
   For all j and all $K \subset \Sigma(1)$,
   \(H^j(\X,\sheaf O_\X(-D_K)) = H^j(\X,\sheaf O_\X(-D_K))_0\).
   That is, the cohomology of $\sheaf O_\X(-D_K)$ is purely of weight 0.
   \label{lem:invariantCohomology}
\end{lemma}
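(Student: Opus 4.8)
The plan is to apply Theorem~\ref{thm:simp} to the divisor $D = -D_K$ and to show that for every nonzero weight $m \in M$ the polytope complex $V_{-D_K,m}$ is either empty or contractible, so that its reduced cohomology vanishes in all degrees. Writing $D_K = \sum_{\rho} a_\rho D_\rho$ with $a_\rho = 1$ for $\rho \in K$ and $a_\rho = 0$ otherwise, the divisor $-D_K$ has coefficients $-a_\rho$, and the index set defining the complex becomes
\[
\Sigma_{-D_K,m} = \{\rho \in \Sigma(1) \;|\; \langle m, v_\rho\rangle < a_\rho\}
= \{\rho \notin K \;|\; \langle m, v_\rho\rangle < 0\} \;\cup\; \{\rho \in K \;|\; \langle m, v_\rho\rangle < 1\}.
\]
Thus $\rho \in K$ lies in $\Sigma_{-D_K,m}$ precisely when $\langle m,v_\rho\rangle \leq 0$, while $\rho \notin K$ lies in it precisely when $\langle m, v_\rho\rangle < 0$. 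The first step is to record this combinatorial description cleanly.

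Next I would identify $V_{-D_K,m}$ with the support of the subfan
\[
\Sigma_m := \{\sigma \in \Sigma \;|\; \sigma(1) \subset \Sigma_{-D_K,m}\},
\]
i.e.\ the union of the cones all of whose rays $\rho$ satisfy the sign condition above. The key geometric observation is that this support is \emph{star-shaped}: if $v \in |\Sigma_m|$ lies in some cone $\sigma \in \Sigma_m$, then the whole segment $[0,v]$ lies in $\sigma$ and hence in $|\Sigma_m|$, so $|\Sigma_m|$ is contractible whenever it is nonempty (it deformation retracts to the origin). Here one must be slightly careful, because $V_{-D_K,m}$ as defined is the union of the convex hulls $\mathrm{Conv}(v_\rho \mid \rho \in \sigma)$ rather than the cones themselves; but these convex hulls form a polytopal complex that is a union of simplices, and the straight-line homotopy $t \mapsto tv$ still carries each such hull into itself (each $\mathrm{Conv}(v_\rho\mid\rho\in\sigma)$ is convex and contains $0$ in its ``cone closure'', and more to the point the standard identification — cf.\ the discussion in \cite{Cox:2010tv} around $V_{D,m}^{\mathrm{simp}}$ — shows $V_{-D_K,m}$ is homotopy equivalent to $|\Sigma_m|$). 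Either way, $\widetilde H^{p-1}(V_{-D_K,m},\CC) = 0$ for all $p$ whenever $m \neq 0$, which by Theorem~\ref{thm:simp} gives $H^p(\X,\sheaf O_\X(-D_K))_m = 0$ and hence the claim.

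\textbf{The main obstacle.} The substantive point is not the star-shapedness of $|\Sigma_m|$ — that is immediate once the cone description is in hand — but rather justifying the passage from $V_{-D_K,m}$, a union of \emph{convex hulls of ray generators}, to the genuine support $|\Sigma_m|$ of the subfan, and confirming that the straight-line contraction is well-defined on the polytopal model. This is exactly the kind of bookkeeping that is standard in the toric cohomology literature (it is implicit in Demazure's original argument and spelled out for $V_{D,m}^{\mathrm{simp}}$ in \cite{Cox:2010tv}), so I would either cite that reference for the homotopy equivalence $V_{-D_K,m} \simeq |\Sigma_m|$ or, if a self-contained argument is wanted, note that $\mathrm{Conv}(v_\rho\mid\rho\in\sigma)$ is a face of the cone $\sigma$ intersected with an affine slice and that $t\mapsto tv/\|\cdot\|$-type reasoning keeps it inside $V_{-D_K,m}$. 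One should also dispose of the trivial degenerate case $m = 0$ separately (where weight $0$ is allowed and no vanishing is claimed), and remark that when $V_{-D_K,m} = \varnothing$ reduced cohomology still vanishes, so the conclusion holds uniformly over all $m \neq 0$.
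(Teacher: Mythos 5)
Your reduction to Theorem~\ref{thm:simp} and your computation of the index set $\Sigma_{-D_K,m}$ are both correct, but the central step --- that $V_{-D_K,m}$ is contractible because the support $|\Sigma_m|$ of the subfan is star-shaped about the origin --- is wrong. The set $V_{-D_K,m}$ is a union of the \emph{affine} simplices $\mathrm{Conv}(v_\rho \mid \rho\in\sigma(1))$, none of which contains the origin, so the straight-line homotopy $t\mapsto tv$ leaves each such simplex immediately and does not restrict to a contraction of $V_{-D_K,m}$. The correct relationship is that $V_{-D_K,m}$ is (homeomorphic/homotopy equivalent to) the \emph{link} of the origin in $|\Sigma_m|$, i.e.\ to $|\Sigma_m|\setminus\{0\}$, not to the star-shaped set $|\Sigma_m|$ itself --- which is precisely why its reduced cohomology can be nonzero and is what Theorem~\ref{thm:simp} computes. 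A decisive sanity check: nothing in your star-shapedness argument uses $m\neq 0$ (the zero cone always lies in $\Sigma_m$, so $|\Sigma_m|$ is always nonempty and star-shaped), so if the argument were valid it would equally prove $H^j(\X,\sheaf O_\X(-D_K))_0=0$; this contradicts Proposition~\ref{prop:vanishing}(iii), where for a primitive collection $K$ one has $V_{-D_K,0}\cong S^{k-2}$ with nontrivial reduced cohomology. Even the statement you actually need --- vanishing of $\widetilde H^*(V_{-D_K,m})$ for every $m\neq 0$ --- is not accessible by a one-line convexity argument.

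The paper sidesteps the topology entirely. Membership in $\Sigma_{-D_K,m}$ depends only on whether $\langle m,v_\rho\rangle\le 0$ (for $\rho\in K$) or $\langle m,v_\rho\rangle<0$ (for $\rho\notin K$), and these conditions are invariant under replacing $m$ by $\lambda m$ for any positive integer $\lambda$. Hence $V_{-D_K,m}=V_{-D_K,\lambda m}$ and $H^j(\X,\sheaf O_\X(-D_K))_m\simeq H^j(\X,\sheaf O_\X(-D_K))_{\lambda m}$ for all $\lambda\in\ZZ_{>0}$. If some weight $m\neq 0$ contributed nontrivially, then so would the infinitely many weights $\lambda m$, forcing $H^j(\X,\sheaf O_\X(-D_K))$ to be infinite-dimensional and contradicting the finiteness of coherent cohomology on the projective variety $\X$. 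You should replace your contractibility step with this rescaling-plus-finiteness argument.
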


\begin{proof}
   By Theorem \ref{thm:simp}, $H^j(\X,\sheaf  O_\X(-D_K))_m$ is the reduced 
cohomology
   of the topological space $V_{-D_K,m}$ obtained as follows: $\Sigma$
   determines a simplicial complex whose faces are $\text{Conv}(v_\rho | \rho
   \in \sigma(1))$ for $\sigma \in \Sigma$.  $V_{-D_K,m}$ is the subcomplex
   corresponding to those $\sigma$ such that $\sigma(1)$ is contained in: 
   \[
   \Sigma_{-D_K,m} := \{\rho \in K | \langle m,v_\rho\rangle \leq 0\} \cup
   \{\rho \notin K | \langle m,v_\rho\rangle < 0\}.
   \]
   Here we use the fact that the coefficients of the $D_\rho$ in $-D_K$
   are either 0 or $-1$.  The set $\Sigma_{-D_K,m}$  is invariant under
   rescaling $m$ by a positive integer, and therefore so are $V_{D,m}$
   and $H^j(\X,\mathcal  O_\X(-D_K))_m$. If the latter were
   non-vanishing for some non-zero $m$, $H^j(\X,\sheaf  O_\X(-D_K))$ would not
   be finite dimensional, contradicting the projectivity of $X$.
\end{proof}

\begin{proposition}
   Let $\Sigma$ be a simplicial fan and $K\subset \Sigma(1)$.  Setting  $k =
   |K|$  and $D_K = \sum_{\rho\in K} D_\rho$ as before,  we have that 
   \begin{enumerate}[i)]
      \item For all $\ell \geq k$, $H^\ell (\X, \mathcal O_\X(-D_K)) = 0$
      \item If $\cap_{\rho\in K} D_\rho \ne \emptyset$, then for all
         $\ell\in \ZZ$,  $H^\ell(\X, \mathcal
         O_\X(-D_K)) = 0$.
      \item If $K$ is a primitive collection,
         \[
         H^\ell(\X, \sheaf O_\X(-D_K)) \simeq
         \begin{cases}
            \CC & \ell = k-1\\
            0 & \text{otherwise}
         \end{cases}
         \]
       \item If $K$ is not a primitive collection, then 
             $H^{k-1}(X,\sheaf O_X(-D_K))=0$.

   \end{enumerate}
   \label{prop:vanishing}
\end{proposition}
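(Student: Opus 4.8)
The plan is to run all four statements through Theorem~\ref{thm:simp} and Lemma~\ref{lem:invariantCohomology}, reducing each to the reduced cohomology of a single explicit simplicial complex. By Lemma~\ref{lem:invariantCohomology} the cohomology is concentrated in weight $0$, so Theorem~\ref{thm:simp} gives $H^\ell(\X,\sheaf O_\X(-D_K))\simeq\widetilde H^{\ell-1}(V_{-D_K,0},\CC)$ for all $\ell$. Specializing the formula for $\Sigma_{-D_K,m}$ obtained in the proof of Lemma~\ref{lem:invariantCohomology} to $m=0$ yields $\Sigma_{-D_K,0}=K$, so $V_{-D_K,0}$ is the subcomplex $\mathcal K$ of the simplicial complex determined by $\Sigma$ supported on the vertices $\{v_\rho\mid\rho\in K\}$; equivalently, the faces of $\mathcal K$ are exactly those subsets $S\subseteq K$ that span a cone of $\Sigma$. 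Since $\mathcal K$ has precisely $k$ vertices, $\dim\mathcal K\le k-1$, with equality if and only if $K$ itself spans a cone of $\Sigma$. Everything then reduces to identifying $\mathcal K$ in each case and reading off $\widetilde H^*(\mathcal K,\CC)$.

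First I would treat (i) and (ii) at once. If $K$ spans a cone of $\Sigma$ — equivalently, by the standard description of intersections of torus-invariant divisors in a simplicial fan, if $\bigcap_{\rho\in K}D_\rho\ne\emptyset$ — then $\mathcal K$ is the full simplex $\Delta^{k-1}$, which is contractible, so $\widetilde H^j(\mathcal K,\CC)=0$ for all $j$; this is (ii). If $K$ does not span a cone, then $\dim\mathcal K\le k-2$, so $\widetilde H^j(\mathcal K,\CC)=0$ for $j\ge k-1$ and hence $H^\ell(\X,\sheaf O_\X(-D_K))=0$ for $\ell\ge k$. Together these two cases give (i). Next, for (iii): if $K$ is a primitive collection then every proper subset of $K$ spans a cone while $K$ does not, so the faces of $\mathcal K$ are precisely the proper subsets of $\{v_\rho\mid\rho\in K\}$, i.e.\ $\mathcal K=\partial\Delta^{k-1}$, homeomorphic to $S^{k-2}$ (here $k\ge 2$, since a one-element primitive collection would be a ray not spanning a cone, contradicting $K\subset\Sigma(1)$). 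Therefore $H^\ell(\X,\sheaf O_\X(-D_K))\simeq\widetilde H^{\ell-1}(S^{k-2},\CC)$ equals $\CC$ for $\ell=k-1$ and $0$ otherwise.

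The step I expect to be the main obstacle is (iv). Assume $K$ is not primitive. If $K$ spans a cone we are done by (ii), so assume it does not; then some proper subset $S\subsetneq K$ fails to span a cone of $\Sigma$, hence is not a face of $\mathcal K$, and therefore neither is any subset of $K$ containing $S$. Since also $K\notin\mathcal K$, the complex $\mathcal K$ is a subcomplex of $\partial\Delta^{k-1}$ that omits at least one $(k-2)$-dimensional face — namely any $(k-1)$-element subset of $K$ containing $S$ — so $\mathcal K$ is a \emph{proper} subcomplex of $\partial\Delta^{k-1}=S^{k-2}$. It then suffices to prove that a proper subcomplex $V$ of $S^{k-2}$ satisfies $\widetilde H^{k-2}(V,\CC)=0$, since this gives $H^{k-1}(\X,\sheaf O_\X(-D_K))=\widetilde H^{k-2}(\mathcal K,\CC)=0$. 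Over $\CC$ this is equivalent to $\widetilde H_{k-2}(V,\CC)=0$, and since $\dim V\le k-2$ one has $\widetilde H_{k-2}(V,\CC)=Z_{k-2}(V)\subseteq Z_{k-2}(\partial\Delta^{k-1})$. The latter is one-dimensional, spanned by the fundamental cycle of the sphere, whose expression in the basis of oriented $(k-2)$-simplices has a nonzero coefficient on \emph{every} top face; a chain supported on the faces of the proper subcomplex $V$ has coefficient $0$ on some top face, so it cannot be a nonzero multiple of the fundamental cycle, forcing $Z_{k-2}(V)=0$. This finishes (iv) and the proof.
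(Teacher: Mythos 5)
Your proof is correct and follows essentially the same route as the paper: reduce via Lemma~\ref{lem:invariantCohomology} and Theorem~\ref{thm:simp} to the reduced cohomology of $V_{-D_K,0}$, identify that complex case by case, and read off its cohomology. The only difference is that you spell out the top-cohomology vanishing for a proper subcomplex of $\partial\Delta^{k-1}$ in part (iv), which the paper leaves implicit.
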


\begin{proof}
   We use Theorem \ref{thm:simp} and the notation therein throughout.  By Lemma \ref{lem:invariantCohomology}, we need
   only consider the torus-invariant part of the cohomology.  The relevant
   set of one-cones is $\Sigma_{-D_K,0} = K$. 

   \begin{enumerate}[i)]
      \item $V_{-D_K,0}$ is contained in the convex hull of $k$
         points and so can never contain a non-contractible $k-1$ cycle.
         Similarly, it does not contain any $\ell$ cycles with $\ell > k-1$.
      \item
         If the intersection is nonempty, $\text{cone}\{v_\rho\;|\; \rho \in 
K\} \in
         \Sigma$ and the $v_\rho$ are linearly independent since $\Sigma$ is
         simplicial: thus $V_{-D_K,0}$ is a $k-1$ simplex.
      \item
         Consider a primitive collection $K$.
         Since $K$ is primitive, every proper subset of $K$ spans a cone in 
$\Sigma$, so the
         simplicial complex takes the form
         \[
         V_{-D_K,0} = \bigcup_{\rho' \in K} \text{Conv}(v_\rho \; |\;  \rho \in K, \rho \ne \rho').
         \]
         This set is precisely the boundary of the $(k-1)$-simplex Conv$(v_\rho\;|\;
         \rho \in K)$, so that $V_{-D_K,0}$ is homeomorphic to the $(k-2)$ sphere
and the last claim follows immediately.
     \item If $K$ is not a primitive collection, we need only consider the
situation where $\cap_{\rho\in K}D_\rho=\emptyset$.  Then by comparison to the
analysis in iii) above, we see that either $V_{-D_K,0}$ has dimension
strictly less than $k-2$, or it has dimension $k-2$ and is homeomorphic
to a proper subcomplex of the above simplicial triangulation of $S^{k-2}$.
Either way we conclude that $\widetilde{H}^{k-2}
(V_{-D_K,0})=0$ and we are done.
   \end{enumerate}
\end{proof}

\begin{remark}
   An immediate consequence of $ii)$ is that for all $\ell\in \ZZ$ and
   $\rho \in \Sigma(1)$, $H^\ell(\X, \sheaf O_\X(-D_\rho)) = 0$.
\end{remark}

\section{$\poly$ for toric deformations of the tangent bundle}
\label{sec:polymology}

In this section, we study the algebra $\poly$, showing that as a
bigraded vector space it is isomorphic to $H^*(\X, \CC)$.  Multiplicatively 
it is
generated under the cup product by elements of $H^1(\X, \sheaf E^\vee)$.
We show that the relations among the generators may be given explicitly by
defining an ideal analogous to $SR(X)$.  Some of the results in this 
section are not used elsewhere in this paper, but we include them since
they could be useful in applications to the NLSM.

\subsection{Graded components}

We begin our study of $\poly$ by elucidating its vector space structure.
In particular, we show that it is diagonal, in the sense that its graded
components $\polypq pq$ vanish unless $p=q$.

\begin{proposition}
   \label{prop:gradedIsomorphism}
   Let $\sheaf E$ be a locally-free toric Euler sequence deformation.  Then
   for any $p$ and $q\ne p$, 
   \[
   \polypq pq = 0.
   \]
\end{proposition}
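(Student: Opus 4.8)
The plan is to compute each graded piece $\polypq{p}{q} = H^q(X, \exterior{p}\sheaf E^\vee)$ using the generalized Koszul complex \eqref{eq:koszulType}, which resolves $\exterior{p}\sheaf E^\vee$ by the terms $\Z{p}{j} = \exterior{j}Z \otimes \Sym^{p-j}W$. Since $W$ and $\Sym^{p-j}W$ are constant (trivial) bundles, the cohomology of $\Z{p}{j}$ is $H^*(X, \exterior{j}Z) \otimes \Sym^{p-j}W$, and $\exterior{j}Z = \exterior{j}\bigl(\bigoplus_\rho \sheaf O_X(-D_\rho)\bigr) = \bigoplus_{|K|=j} \sheaf O_X(-D_K)$, the sum being over size-$j$ subsets $K \subset \Sigma(1)$. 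So the whole computation reduces to knowing $H^\ell(X, \sheaf O_X(-D_K))$, which is exactly the content of Proposition~\ref{prop:vanishing}: this cohomology is concentrated in degree $\ell = |K|-1$, and is nonzero (one-dimensional) only when $K$ is a primitive collection. (We may assume $\Sigma$ simplicial since $X$ is smooth.)

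First I would break the long exact sequence \eqref{eq:koszulType} into short exact sequences and chase cohomology, or equivalently set up the hypercohomology spectral sequence of the complex $\Z{p}{\bullet}$ converging to $H^{*}(X,\exterior{p}\sheaf E^\vee)$ (shifted appropriately). The $E_1$ page has entries $\bigoplus_{|K|=j} H^\ell(X,\sheaf O_X(-D_K)) \otimes \Sym^{p-j}W$. By Proposition~\ref{prop:vanishing}(i), $H^\ell(X,\sheaf O_X(-D_K)) = 0$ for $\ell \geq |K| = j$; and by part (ii), the cohomology vanishes entirely unless $\cap_{\rho\in K}D_\rho = \emptyset$, in which case parts (iii)–(iv) show it is supported in degree $\ell = j-1$ and is one-dimensional exactly when $K$ is primitive. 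Thus on the $E_1$ page the only nonzero entries sit at $(\ell, j)$ with $\ell = j-1$, i.e.\ along a single line. A complex concentrated on a single diagonal (resp. a spectral sequence with only one nonzero antidiagonal contributing to each total degree) forces the total cohomology $H^q(X,\exterior{p}\sheaf E^\vee)$ to vanish whenever the bookkeeping does not place the contribution at total degree $q=p$: concretely, the contribution of $\Z{p}{j}$ to $H^q$ comes from $H^{q}$-type terms in position $j$, and the surviving terms $H^{j-1}(X,\sheaf O_X(-D_K))$ feed into $H^q(X,\exterior{p}\sheaf E^\vee)$ only for the value of $q$ matching the homological shift, which works out to $q = p$. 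I would track the indices carefully through the short exact sequences to confirm this alignment, using the Remark after Proposition~\ref{prop:vanishing} (the case $|K|=1$) to start the induction cleanly.

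The main obstacle I expect is the index bookkeeping in the long exact sequence / spectral sequence: making sure that the single surviving diagonal in the $H^\ell(X,\exterior{j}Z)$ groups lands in total degree exactly $p$ and not some other $q$, and handling the differentials. In principle the $E_1$ differentials $\alpha_j$ could still cause cancellation, but since all nonzero $E_1$ terms lie on one line, no differential on any page connects two nonzero entries in a way that changes the total-degree count — the spectral sequence degenerates for degree reasons as far as the vanishing statement is concerned. So the argument is: (a) reduce to line-bundle cohomology via the Koszul complex and the triviality of the $W$-factors; (b) invoke Proposition~\ref{prop:vanishing} to pin the surviving cohomology to the diagonal $\ell = j-1$; (c) conclude by the degree count that $H^q(X,\exterior{p}\sheaf E^\vee) = 0$ for $q \neq p$. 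I would also remark that this simultaneously shows $\polypq{p}{p}$ is computed by the primitive collections of size $\leq p$, foreshadowing the identification with $H^*(X,\CC)$ in Theorem~\ref{thm:polymologyIsomorphism}.
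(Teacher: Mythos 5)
There is a genuine gap, and it sits exactly where you flagged your worry: the claim that the $E_1$ page is concentrated on the single line $\ell=j-1$. Proposition~\ref{prop:vanishing} does \emph{not} say that $H^*(\X,\sheaf O_\X(-D_K))$ is supported in degree $|K|-1$. It controls only the top range: part (i) kills degrees $\ell\ge |K|$, part (iii) computes degree $|K|-1$ for primitive $K$, and part (iv) kills degree $|K|-1$ for non-primitive $K$. Nothing in the proposition rules out cohomology in degrees $\ell<|K|-1$ when $K$ is non-primitive with $\cap_{\rho\in K}D_\rho=\emptyset$, and such cohomology really occurs. For example, on $X=(\PP^1)^3$ take $K$ to be the union of two of the three primitive pairs, so $|K|=4$; then $V_{-D_K,0}$ is a $4$-gon, homeomorphic to $S^1$, so $H^2(\X,\sheaf O_\X(-D_K))\simeq\widetilde H^1(S^1)=\CC$, a nonzero entry with $\ell=2<|K|-1=3$. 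So the $E_1$ page of your spectral sequence has nonzero entries scattered in many total degrees, nontrivial differentials between nonzero entries are genuinely needed (in the example above, a $d_2$ into the contribution of a primitive pair), and the conclusion does not follow ``for degree reasons.'' There is also a bookkeeping error even for the entries you do identify: with $\Z pj$ placed in cohomological degree $p-j$, an entry $H^{j-1}(\X,\Z pj)$ sits in total degree $(p-j)+(j-1)=p-1$, not $p$, while the nonvanishing $H^p$ comes from the corner $H^0(\X,\Z p0)=\Sym^pW$ (which is not on your line at all, since Proposition~\ref{prop:vanishing} does not apply to $K=\emptyset$). If your picture were right, the polymology would be concentrated in degree $p-1$; in reality the line of entries at total degree $p-1$ is exactly what must be killed by differentials into $\Sym^pW$ --- this cancellation is the content of Proposition~\ref{prop:mk} and Remark~\ref{rem:identifykernel}, not something that comes for free.

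The paper avoids all of this by proving only half of the statement with the Koszul complex. For $q>p$ it breaks \eqref{eq:koszulType} into the short exact sequences \eqref{eq:endSequences} and uses \emph{only} part (i) of Proposition~\ref{prop:vanishing} (i.e.\ $H^q(\X,\Z p\ell)=0$ for $q\ge\ell$) to get a chain of isomorphisms $\polypq pq\simeq\Sym^pW\otimes H^{q-p}(\X,\sheaf O_\X)=0$; no knowledge of the lower-degree cohomology of the $\sheaf O_\X(-D_K)$ is needed there. The case $q<p$ is then disposed of by Serre duality: the presentation \eqref{eq:Epresentation} forces $\det\sheaf E^\vee\simeq\omega_X$, so $\polypq pq$ is dual to $\polypq {n-p}{n-q}$ with $n-q>n-p$, reducing to the case already done. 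That duality step is the missing idea in your argument; without it (or without a much finer analysis of $H^{<|K|-1}(\X,\sheaf O_\X(-D_K))$ and the spectral sequence differentials), the vanishing for $q<p$, and in particular for $q=p-1$, is not established.
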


\begin{proof}
   For $q = 0$, the proposition holds trivially, since 
$\exterior p\sheaf E^\vee\subset \exterior p Z$ and $H^0(X, \exterior p Z)=0$.  
Thus, assume that $q > 0$.

   Consider the exact sequence in \eqref{eq:koszulType}. For each $1\leq j \leq p-1$ define
   \begin{equation}
      \Ss p j = \ker \Big(\Z p j \to \Z p {j-1}\Big),
      \label{eq:koszulSplitting}
   \end{equation}
   and set $\Ss p 0 := \Sym^p W \otimes \sheaf O_\X$ and $\Ss p p := \exterior p \sheaf E^\vee$. Then, for each $0 < \ell \leq
   p$,  we have a short exact sequence
   \begin{equation}
      0 \lto \Ss p \ell \lto \Z p \ell \lto \Ss p {\ell-1} \lto 0. 
      \label{eq:endSequences}
   \end{equation}

   We first show that for $q > p$, $\polypq pq = 0$.  
   Consider the 
   long exact sequence in cohomology induced by \eqref{eq:endSequences}:
   \[
   \cdots \to H^{q-1}(\X, \Z p \ell) \to H^{q-1}(\X, \Ss p {\ell -1}) \to H^q (\X, \Ss p \ell) \to H^q (\X, \Z p \ell) \to \cdots.
   \]
   By Proposition \ref{prop:vanishing}, we have that for $q \ge \ell$, 
$H^q(\X, \Z p \ell) = 0$, so that for $q > \ell$ we have
   \[
   H^{q-1}(\X, \Ss p {\ell -1}) \simeq H^q (\X, \Ss p \ell).
   \]
   By varying $\ell$, we obtain a chain of isomorphisms 
   \[
   H^{q-p}(\X, \Ss p 0) \stackrel \sim \longrightarrow
   H^{q-p+1}(\X,  \Ss p 1 ) \stackrel \sim \longrightarrow
   \cdots \stackrel \sim \longrightarrow
   H^q(\X, \Ss p p),
   \]
   which shows $\polypq pq\simeq 
 \Sym^p W \otimes H^{q-p}(\X, \sheaf O_\X) = 0$.

   Now, assume that $q < p$. By Serre duality 
$\polypq pq$ is dual to $\polypq {n-p} {n-q}$,
   and $n -q > n-p$, so the latter vanishes by the above considerations.
\end{proof}

\begin{corollary}
   Let $\sheaf E$ be a toric deformation of the tangent bundle.  
Then there is a
   canonical isomorphism
   \begin{equation}
      \poly \simeq \bigoplus_k \polypq k k.
      \label{eq:polyDiagonal}
   \end{equation}
   \label{cor:polyDiagonal}
\end{corollary}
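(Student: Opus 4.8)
The plan is to deduce this immediately from Proposition~\ref{prop:gradedIsomorphism}. First I would observe that a toric deformation of the tangent bundle, in the sense fixed just after \eqref{eq:Epresentation}, is in particular a locally-free deformation of the toric Euler sequence, so Proposition~\ref{prop:gradedIsomorphism} applies to it verbatim. (The ambient fan $\Sigma$ is simplicial because $X$ is smooth, so the hypotheses of Proposition~\ref{prop:vanishing}, on which the previous proposition rests, are also satisfied.)

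Next, unwinding the definition \eqref{eq:classicalalgebra} of the polymology, $\poly = \bigoplus_{p,q}\polypq p q$ as a bigraded vector space. Proposition~\ref{prop:gradedIsomorphism} asserts that every summand with $q \ne p$ vanishes, so the only surviving terms lie on the diagonal $q = p$, which gives $\poly = \bigoplus_k \polypq k k$. The isomorphism is \emph{canonical} precisely because no choices enter: it is the identity on each diagonal graded piece, obtained simply by discarding summands already known to be zero. In particular it is an isomorphism of bigraded vector spaces, and it is compatible with the restriction of the cup product, so it respects the algebra structure as well.

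I do not anticipate any genuine obstacle here; the entire content sits in Proposition~\ref{prop:gradedIsomorphism}, and ultimately in the vanishing statements of Proposition~\ref{prop:vanishing} and Lemma~\ref{lem:invariantCohomology} together with the Koszul-type resolution \eqref{eq:koszulType}. The only point requiring a moment's care is terminological bookkeeping: confirming that ``toric deformation of the tangent bundle'' and ``locally-free toric Euler sequence deformation'' denote the same class of objects, which is immediate from the local-freeness condition \eqref{eq:elocfree} imposed on the presentation \eqref{eq:Epresentation}.
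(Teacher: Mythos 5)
Your argument is correct and is exactly how the paper obtains this corollary: it is an immediate consequence of Proposition~\ref{prop:gradedIsomorphism}, since the off-diagonal summands of $\poly=\bigoplus_{p,q}\polypq pq$ vanish and the remaining identification is the identity on each diagonal piece. Your terminological check that a toric deformation of the tangent bundle is a locally-free toric Euler sequence deformation is the right (and only) point of care, and it holds by the local-freeness requirement \eqref{eq:elocfree}.
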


\begin{remark}
   If we examine the particular case of $k = 1$, $\ell = 1$, we find
   that 
   \begin{equation}
      H^1(X,\sheaf E^\vee) \simeq H^0(\X, \sheaf O_\X \otimes W) \simeq W
      \label{eq:generators}
   \end{equation}
\end{remark}
by dualizing (\ref{eq:Epresentation}) to obtain 
\begin{equation}
0\to\sheaf E^\vee\to Z\to W\otimes \sheaf O \to 0
\label{eq:edkernel}
\end{equation}
and using $H^0(X,Z)=H^1(X,Z)=0$.

\subsection{Generators}

Now that we have a better idea of the linear structure of the algebra,
we would like to identify a set of minimal generators.  Corollary
\ref{cor:polyDiagonal} and the fact that the cohomology of smooth
projective toric varieties are generated by elements of $H^1(\X,
\Omega_\X^1)$ lead us to suspect that elements of $H^1(X,\sheaf E^\vee)
\simeq W$ generate $\poly$.   

As multiplication is defined using the cup product and the algebra is
diagonal, it is in fact commutative.  For $1 \leq k \leq n$,
the cup product of $k$ elements of $H^1(X,\sheaf E^\vee)$ is a linear map 
$\Sym^k  H^1(X,\sheaf E^\vee) \to \polypq kk$ that we will rewrite as
\begin{equation}
   m_k:\Sym^k  W \to \polypq kk.
   \label{eq:kMultiplication}
\end{equation}

\begin{lemma}
The map $m_k$ can be identified with
the extension class in $\mathrm{Ext}^k(\mathrm{Sym}^kW
\otimes \sheaf O_X,\exterior k\sheaf E^\vee)\simeq
\Hom(\mathrm{Sym}^kW,\polypq kk)$ associated to the generalized
Koszul complex
\eqref{eq:koszulType}.  
\end{lemma}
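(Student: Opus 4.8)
The plan is to identify $m_k$ with the Yoneda product / connecting-homomorphism description of the extension class of the complex \eqref{eq:koszulType}, by breaking that complex into the short exact sequences \eqref{eq:endSequences} — exactly the sequences already introduced in the proof of Proposition~\ref{prop:gradedIsomorphism} — and tracking the iterated connecting maps in cohomology. Recall that an exact sequence $0\to A\to C_k\to\cdots\to C_1\to B\to 0$ represents a class in $\Ext^k(B,A)$, and that under the Yoneda description this class is the composite of the connecting homomorphisms $\delta_\ell:\Ext^*(-,\Ss p{\ell-1})\to\Ext^{*+1}(-,\Ss p\ell)$ coming from the short exact sequences \eqref{eq:endSequences}; equivalently, applying $\Hom(\Sym^kW\otimes\sheaf O_X,-)$ and using $\Ext^i(\sheaf O_X,-)=H^i(X,-)$, the extension class, viewed as an element of $\Hom(\Sym^kW,H^k(X,\exterior k\sheaf E^\vee))$, is the composite $H^0(X,\Ss k0)\to H^1(X,\Ss k1)\to\cdots\to H^k(X,\Ss kk)$ of the connecting maps.

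\textbf{Step 1.} First I would set up the general homological fact: for a locally free (hence, by the remark after \eqref{eq:elocfree}, everywhere-exact) complex \eqref{eq:koszulType}, splicing it into the short exact sequences \eqref{eq:endSequences} exhibits its class in $\Ext^k(\Ss k0,\Ss kk)=\Ext^k(\Sym^kW\otimes\sheaf O_X,\exterior k\sheaf E^\vee)$ as the Yoneda composite $\delta_k\circ\cdots\circ\delta_1$ of the one-step extension classes, and that applying $\Hom(\Sym^kW\otimes\sheaf O_X,-)$ turns this into the composite of connecting homomorphisms in sheaf cohomology, landing in $\Hom(\Sym^kW,\polypq kk)$. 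The identification $\Ext^k(\Sym^kW\otimes\sheaf O_X,\exterior k\sheaf E^\vee)\simeq\Hom(\Sym^kW,\polypq kk)$ is just $\Sym^kW$ being a trivial (constant) coefficient space together with $\Ext^i(\sheaf O_X,\sheaf F)=H^i(X,\sheaf F)$.

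\textbf{Step 2.} Then I would show this composite of connecting maps equals the cup-product map $m_k$. The cleanest route: the case $k=1$ is the statement that the connecting map $H^0(X,\Sym^1W\otimes\sheaf O_X)\to H^1(X,\sheaf E^\vee)$ for the dual Euler sequence \eqref{eq:edkernel} is the isomorphism $W\xrightarrow{\sim}H^1(X,\sheaf E^\vee)$ of \eqref{eq:generators} — which is precisely how that isomorphism was defined, so $m_1$ is the extension class of \eqref{eq:edkernel}. For general $k$ one observes that the $\ell$-th short exact sequence \eqref{eq:endSequences} is, up to the symmetric-power bookkeeping in $\alpha_\ell$, obtained by tensoring (a twist of) the dual Euler sequence \eqref{eq:edkernel} with $\exterior{\ell-1}Z\otimes\Sym^{p-\ell}W$ and wedging/multiplying; hence each $\delta_\ell$ is cup product with the class $m_1\in H^1(X,\sheaf E^\vee)$ followed by the exterior multiplication $\sheaf E^\vee\otimes\exterior{\ell-1}\sheaf E^\vee\to\exterior\ell\sheaf E^\vee$ and the symmetric multiplication $W\otimes\Sym^{p-\ell}W\to\Sym^{p-\ell+1}W$. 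Composing $k$ of these and using associativity of the cup product together with associativity of wedge and of $\odot$ reproduces exactly the $k$-fold cup product \eqref{eq:kMultiplication}, i.e.\ $m_k$. The one point needing care is the sign/combinatorial compatibility between the Koszul differential $\alpha_j$ in \eqref{eq:koszulMaps} and the graded-commutative multiplications, so that no spurious factorials or signs appear; this is a bookkeeping check with the formula for $\alpha_j$.

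\textbf{The main obstacle} is Step 2 — making precise the claim that each short exact sequence \eqref{eq:endSequences} ``is'' the dual Euler sequence tensored with $\Z{p}{\ell-1}$-type data, so that its connecting map is genuinely cup-with-$m_1$-then-multiply. One clean way to avoid hand-to-hand combat with cocycle representatives is to argue functorially: the whole complex \eqref{eq:koszulType} is built from \eqref{eq:Epresentation} by a single natural (exact) functor applied to a short exact sequence of vector bundles (as already noted after \eqref{eq:koszulType}), and the Koszul/Eagon--Northcott formalism identifies the resulting $\Ext^k$-class, under $\Ext^\bullet(\sheaf O_X,-)=H^\bullet(X,-)$ and the canonical iso $\exterior k(\text{anything})$-bookkeeping, with the $k$-fold Yoneda power of the class of \eqref{eq:edkernel}, which in turn is $m_1^{\cup k}=m_k$ by definition of $m_1$ and of the cup product on $\poly$. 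I expect the write-up to reduce, after this structural observation, to citing the standard description of the extension class of a long exact sequence as an iterated connecting map, plus the $k=1$ identification already contained in \eqref{eq:generators}–\eqref{eq:edkernel}.
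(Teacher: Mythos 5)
Your strategy is correct and, at the decisive point, coincides with the paper's. The paper proves the lemma by representing the class of the dual Euler sequence \eqref{eq:edkernel} as a morphism of complexes $f:W\otimes\sheaf O\to C^\bullet[1]$ with $C^\bullet=[Z\to W\otimes\sheaf O]$, forming $f^{\otimes k}$, observing that $(C^\bullet)^{\otimes k}$ resolves $(\sheaf E^\vee)^{\otimes k}$ with extension class $m_1^{\otimes k}$, and then identifying \eqref{eq:koszulType} as the $S_k$-invariant subcomplex of that tensor power, so that its class is the symmetrization $m_k$. Your primary route --- splice \eqref{eq:koszulType} into \eqref{eq:endSequences}, identify the extension class with the iterated connecting map $\delta_{k-1}\circ\cdots\circ\delta_0$, and show each $\delta_\ell$ is cup-with-$m_1$ --- is a legitimate reorganization (it would produce \eqref{eq:mk} directly rather than as a corollary), and your Step 1 and your $k=1$ base case are exactly right.

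The step you yourself single out as the main obstacle is, however, genuinely where it bites: the $\ell$-th sequence in \eqref{eq:endSequences} is \emph{not} the dual Euler sequence tensored with $\exterior{\ell-1}Z\otimes\Sym^{k-\ell}W$, because the spliced kernels $\Ss{k}{\ell}$ are not of the form $\exterior{\ell}\sheaf E^\vee\otimes\Sym^{k-\ell}W$; already for $k=2$ one finds that $\Ss{2}{1}$ sits in a nontrivial extension $0\to\sheaf E^\vee\otimes W\to\Ss{2}{1}\to\exterior{2}W\otimes\sheaf O\to 0$. To push your Step 2 through one would instead need a morphism of short exact sequences from the tensored Euler sequence into the $\ell$-th spliced one (wedging $\sheaf E^\vee$ into $\exterior{\ell-1}Z$ and checking the quotient map factors through $\Ss{k}{\ell-1}$), followed by naturality of coboundaries. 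Your proposed escape from this --- the functorial identification of the class of \eqref{eq:koszulType} with the symmetrized $k$-fold Yoneda power of the class of \eqref{eq:edkernel}, which equals $m_1^{\cup k}=m_k$ by \eqref{eq:generators} --- is precisely the paper's argument, so the proposal is correct in substance; the difference is packaging, and the paper's packaging handles the symmetrization in one stroke via the $S_k$-action rather than sign-by-sign through the maps $\alpha_j$.
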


\begin{proof}
We sketch the argument.  First, $m_1$ is identified with the extension class 
$e\in\mathrm{Ext}^1(W\otimes \sheaf O,\sheaf E^\vee)$ of
(\ref{eq:edkernel}), an exact sequence that can be reinterpreted as
a quasi-isomorphism of $\sheaf E^\vee$ with the complex
\[
C^\bullet:\qquad 0\to Z\to W\otimes \sheaf O\to 0
\]
with $Z$ in degree~0. Thus, $e$ can be represented by the natural morphism of complexes $f:W\otimes \sheaf O\to C^\bullet[1]$
whose only nontrivial component is the identity map on $W\otimes \sheaf O$.  We can then tensor $f$ with itself $k$ times to
get a morphism 
\[
f^{\otimes k}:W^{\otimes k}\otimes\sheaf O\to 
\left(C^\bullet\right)^{\otimes k}[k].
\]
However, $({C^\bullet})^{\otimes k}$ 
is quasiisomorphic to $(\sheaf E^\vee)^{\otimes k}$, a statement
that we can rewrite as an exact sequence
\begin{equation}
0\to \left(\sheaf E^\vee\right)^{\otimes k}
\to \left(C^\bullet\right)^{\otimes k}\to 0.
\label{eq:edkernelk}
\end{equation}
Noting that $W^{\otimes k}\otimes\sheaf O$ is the degree $k$ component of
$(C^\bullet)^{\otimes k}$, the above discussion identifies 
the extension class of (\ref{eq:edkernelk}) with 
\[
\left(m_1\right)^{\otimes k}:W^{\otimes k}\to H^k(\X,\exterior k\sheaf E^\vee).
\]
Finally, there is a natural action of the symmetric group $S_k$
on the complex (\ref{eq:edkernelk})
obtained by permuting the factors and inserting signs to preserve the 
structure as a complex.  Then (\ref{eq:koszulType}) is simply the 
$S_k$-invariant subcomplex of (\ref{eq:edkernelk}), and the extension class
is $m_k$, the symmetrized version of $m_1^k$, as claimed.
\end{proof}

Each short exact sequence in \eqref{eq:endSequences} induces
coboundary maps
\begin{equation}
   \delta_j \!:\! H^j(\X, S_j^{(k)}) \lto H^{j+1}(\X, S_{j+1}^{(k)}),
   \label{eq:inducedCoboundary}
\end{equation}
which may be composed to a linear map 
\begin{equation}
\delta_{k-1} \circ \delta_{k-2} \cdots \circ \delta_0: H^0(\X, \Sym^k
   W\otimes \sheaf O_\X) \to  H^{k}(\X, \exterior k \sheaf E^\vee).
   \label{eq:inducedCoboundaryComposition}
\end{equation}
By first identifying $H^0(\X, \Sym^k W \otimes \sheaf O_X)$ with $\Sym^k
W$ and then applying the above composition of maps, we obtain  
precisely the linear map in \eqref{eq:kMultiplication}.  That is, 
\begin{equation}
m_k = \delta_{k-1} \circ \delta_{k-2} \circ \cdots \circ \delta_0.
\label{eq:mk}
\end{equation}

\begin{proposition}
   For all $1 \leq k \leq n$, $m_k$ is surjective. 
\label{prop:mk}
\end{proposition}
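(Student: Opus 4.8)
The plan is to use the factorization $m_k=\delta_{k-1}\circ\delta_{k-2}\circ\cdots\circ\delta_0$ recorded in \eqref{eq:mk}. Since a composition of surjective linear maps is surjective, it suffices to prove that for each $0\le j\le k-1$ the coboundary map $\delta_j:H^j(\X,\Ss k j)\to H^{j+1}(\X,\Ss k {j+1})$ from \eqref{eq:inducedCoboundary} is surjective.

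Fix such a $j$ and consider the short exact sequence \eqref{eq:endSequences} with $\ell=j+1$, namely
\[
0\lto \Ss k {j+1}\lto \Z k {j+1}\lto \Ss k j\lto 0 .
\]
The associated long exact cohomology sequence contains the segment
\[
H^j(\X,\Ss k j)\stackrel{\delta_j}{\lto} H^{j+1}(\X,\Ss k {j+1})\lto H^{j+1}(\X,\Z k {j+1}),
\]
so the cokernel of $\delta_j$ embeds into $H^{j+1}(\X,\Z k {j+1})$, and it is enough to show that this last group vanishes.

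Now $\Z k {j+1}=\exterior{j+1}Z\otimes\Sym^{k-j-1}W$, and since $Z=\bigoplus_\rho\sheaf O_\X(-D_\rho)$ we get $\exterior{j+1}Z=\bigoplus_{|K|=j+1}\sheaf O_\X(-D_K)$, the sum running over subsets $K\subset\Sigma(1)$ with $|K|=j+1$; hence
\[
H^{j+1}(\X,\Z k {j+1})\simeq\bigoplus_{|K|=j+1} H^{j+1}\big(\X,\sheaf O_\X(-D_K)\big)\otimes\Sym^{k-j-1}W .
\]
Because $\X$ is smooth its fan is simplicial, so Proposition~\ref{prop:vanishing}(i) applies with $|K|=j+1$ in place of $k$ and yields $H^\ell(\X,\sheaf O_\X(-D_K))=0$ for all $\ell\ge|K|$, in particular for $\ell=j+1$. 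Thus $H^{j+1}(\X,\Z k {j+1})=0$, every $\delta_j$ is surjective, and therefore $m_k$ is surjective.

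I do not expect a genuine obstacle: the whole argument is a short diagram chase resting on the vanishing results of Section~\ref{sec:toric}. The one point demanding care is the index bookkeeping — one needs $\Z k {j+1}$ to have no cohomology in degree exactly $j+1$, which is precisely the cardinality of the index sets $K$ appearing in its line-bundle summands, i.e.\ exactly the borderline case $\ell=|K|$ of Proposition~\ref{prop:vanishing}(i); for the initial step $j=0$ one may alternatively cite the Remark that $H^\ell(\X,\sheaf O_\X(-D_\rho))$ vanishes for all $\ell$.
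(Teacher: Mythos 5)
Your proof is correct and follows essentially the same route as the paper: both factor $m_k$ as the composite of the coboundary maps $\delta_j$ and deduce surjectivity of each $\delta_j$ from the long exact sequence of \eqref{eq:endSequences} together with the vanishing $H^{\ell}(\X,\Z k \ell)=0$ supplied by Proposition~\ref{prop:vanishing}(i). The only difference is that you spell out the line-bundle decomposition of $\exterior{j+1}Z$ explicitly, which the paper leaves implicit.
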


\begin{proof}
   Fix such a $k$;  
   for all $\ell$,
   $H^\ell(\X, \Z k \ell)=0$
   by Proposition \ref{prop:vanishing}. In the 
   long exact sequence in cohomology induced by the exact sequence
   \eqref{eq:endSequences}, we obtain the following subsequences for each
   $0 < \ell \leq k$:
   \begin{equation}
      \cdots \lto H^{\ell-1}(\X, \Z k \ell) \lto H^{\ell-1}(\X, \Ss k {\ell-1}) \lto H^\ell(\X, \Ss k \ell) \lto 0 
      \label{eq:Stuff}
   \end{equation}
   Thus, both the coboundary maps \eqref{eq:inducedCoboundary}
   and their composition \eqref{eq:inducedCoboundaryComposition} are surjective.
\end{proof}

\begin{remark}
In fact, most of the coboundary maps are isomorphisms; $H^{\ell-1}(\X,
\Z k \ell)$ is non-vanishing iff there is a primitive collection of
order $\ell$ in $\Sigma$, by Proposition~\ref{prop:vanishing}.  
This observation allows us to characterize
the kernel of the multiplication map: to find the relations amongst
the generators.
\label{rem:identifykernel}
\end{remark}

\subsection{Relations}
\label{sec:relations}

For each primitive collection $K$ we will exhibit an explicit
element $Q_K\in\ker m_k\subset \mathrm{Sym}^kW$, where $k=|K|$ as before.
We will see later that $\poly$ is the quotient of $\mathrm{Sym}W$ by the
ideal generated by the $Q_K$, with $K$ varying over all primitive collections.
For each $K$, we set 
\[
Z_K=\oplus_{\rho\in K}\sheaf O(-D_\rho),
\]
and for each $\ell\le k$ we set
\[
\ZK \ell = \exterior \ell Z_K\otimes\mathrm{Sym}^{k-\ell}W.
\]
Then the $\ZK \ell$ are the terms of a subcomplex of 
(\ref{eq:koszulType}).  Note that
\[
\ZK k\simeq\sheaf O\left(-D_K\right),
\]
and so $H^{k-1}(X, \ZK k)$ 
is isomorphic to $\CC$ by Proposition~\ref{prop:vanishing}.  

The following Lemma gives a procedure for computing $Q_K$.

\begin{lemma}
\ 
\vspace*{-1em}

\begin{enumerate}[i)]
\item Given $z_k\in Z^{k-1}(\ZK k)$, then one can simultaneously choose
$z_\ell\in C^{\ell-1}(\ZK \ell)$ so that
$\delta (z_\ell) = \alpha_{\ell+1}(z_{\ell+1})$ for each $1\le\ell\le k-1$,
where the $\alpha_i$ are given by (\ref{eq:koszulMaps}).
\item Given choices of the $z_\ell$ as above, then
$\alpha_1(z_1)\in Z^0(\mathrm{Sym}^kW\otimes\sheaf O)\simeq 
\mathrm{Sym}^kW$ depends only on the cohomology class $[z_k]\in
H^{k-1}(\ZK k)$ of $z_k$ 
and not on the choice of representative $z_k$ of that cohomology class or
any of the choices made for $z_\ell$ above.
\item  $\alpha_1(z_1)\in\ker (m_k)$.
\end{enumerate}
\label{lem:chasediagram}
\end{lemma}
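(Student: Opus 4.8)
The plan is to treat this as a diagram chase through the Čech double complex associated to the generalized Koszul complex (\ref{eq:koszulType}), restricted to its subcomplex $\ZK \bullet$. I would set up the double complex $C^\bullet(\mathfrak U, \ZK \bullet)$ for a fixed affine cover $\mathfrak U$ of $\X$, with the Čech differential $\delta$ in one direction and the Koszul maps $\alpha_\ell$ in the other. Part i) is then the standard "staircase" construction: starting from a Čech cocycle $z_k \in Z^{k-1}(\ZK k)$ representing a class in $H^{k-1}(\X, \ZK k)$, one pushes it via $\alpha_k$ into $C^{k-1}(\ZK{k-1})$, where it is a coboundary because $H^{k-1}(\X, \ZK{k-1})$... — wait, more carefully: $\alpha_k(z_k)$ is a $\delta$-cocycle in $C^{k-1}(\ZK{k-1})$ whose class dies, so it equals $\delta(z_{k-1})$ for some $z_{k-1}\in C^{k-2}(\ZK{k-1})$; one then checks $\alpha_{k-1}(z_{k-1})$ is again a $\delta$-cocycle and repeats. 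The input that makes each descent possible is Proposition~\ref{prop:vanishing}: the groups $H^{\ell-1}(\X, \ZK \ell)$ vanish except at the top, so the relevant obstruction classes are zero and the chain of lifts exists down to $\alpha_1(z_1)\in C^{-1}$, which forces $\alpha_1(z_1)$ to be a genuine element of $H^0(\X, \Sym^k W\otimes\sheaf O)\simeq \Sym^k W$ (here I would also invoke Remark~\ref{rem:identifykernel} to see which coboundary maps are isomorphisms, so that the lifts are essentially forced).

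For part ii), the well-definedness, the argument is the usual one for connecting homomorphisms composed in a spectral-sequence-like staircase: two systems of choices $\{z_\ell\}$ and $\{z_\ell'\}$ with $[z_k]=[z_k']$ differ by $z_k - z_k' = \delta(w)$ at the top, and inductively the differences $z_\ell - z_\ell'$ can be corrected by Čech coboundaries plus elements of the image of $\alpha_{\ell+1}$, using exactness of the rows of the double complex (which holds because (\ref{eq:koszulType}) and hence its $Z_K$-subcomplex is an exact complex of sheaves, so the Čech complex in each fixed Čech-degree is a complex of acyclic... — rather, the rows of the double complex are exact since $\exterior \bullet$ of a short exact sequence of vector bundles is exact, cf. the remark after (\ref{eq:koszulType})). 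Carrying the correction all the way down shows $\alpha_1(z_1) - \alpha_1(z_1') \in \alpha_1(\mathrm{im}\,\alpha_2) = 0$, so the resulting element of $\Sym^k W$ is unchanged. I would present this inductively rather than writing out all indices.

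Part iii) is where the identification (\ref{eq:mk}) of $m_k$ as the composite of the coboundary maps $\delta_{k-1}\circ\cdots\circ\delta_0$ does the work. The subtlety is that the $\delta_j$ of (\ref{eq:inducedCoboundary}) are the connecting maps of the sequences (\ref{eq:endSequences}) built from the $S^{(k)}_j$, not from the $\ZK \ell$; but by Proposition~\ref{prop:vanishing} ii) the inclusion of the $Z_K$-subcomplex into the full Koszul complex induces, after splitting into short exact sequences, a commuting ladder, and the class $[z_k]\in H^{k-1}(\X,\ZK k)\simeq\CC$ maps into $H^{k-1}(\X, Z^k_k)$... hmm, this needs care since $S^{(k)}_j\ne \exterior j Z$. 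The cleanest route: note $\alpha_1(z_1)$, built from the staircase, is by construction exactly the image of $[z_k]$ under the composite connecting homomorphism $H^{k-1}(\X,\ZK k)\to H^0(\X,\Sym^kW\otimes\sheaf O)$ read "backwards", i.e. $\alpha_1(z_1)$ is that element of $\Sym^k W$ whose image under $m_k = \delta_{k-1}\circ\cdots\circ\delta_0$ is the image of $[z_k]$ in $H^k(\X,\exterior k\sheaf E^\vee)$ coming from the inclusion $\ZK k \hookrightarrow \exterior k Z$ followed by the boundary; but that inclusion factors through a sheaf whose relevant cohomology the chase shows is killed, hence $m_k(\alpha_1(z_1)) = 0$. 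I expect \emph{this last step — matching the $\ZK \ell$-staircase with the $S^{(k)}_j$-definition of $m_k$ and concluding the composite vanishes — to be the main obstacle}, and I would handle it by a careful commutative-ladder argument comparing the two filtrations of (\ref{eq:koszulType}), the "brutal" filtration by $\exterior \ell Z$ and the one by kernels $S^{(k)}_\ell$, both restricted compatibly to $Z_K$, invoking Proposition~\ref{prop:vanishing} to see that all intermediate cohomology groups obstructing the comparison vanish.
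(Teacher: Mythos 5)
Your overall strategy is the paper's: a \v{C}ech staircase through the double complex $C^\bullet(\ZK\bullet)$ powered by the vanishing in Proposition~\ref{prop:vanishing}, and parts i) and ii) are structurally the same chase (the paper runs ii) as an upward induction starting from the uniqueness of $z_1$, you run it as a downward propagation of corrections; both work). But one justification you give is actually false: the rows of the $Z_K$-restricted double complex are \emph{not} exact. The subcomplex $\ZK k\to\cdots\to\ZK 0$ is built from the restriction of $E^\vee$ to the summand $Z_K\subset Z$, and $Z_K\to W\otimes\sheaf O$ is in general not surjective (already for $\sheaf E=T_X$ on $\PP^1\times\PP^1$ with $K$ the two edges of one ruling, the image is a line in $W$), so ``$\exterior\bullet$ of a short exact sequence'' does not apply. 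Fortunately your argument never needs row exactness: the step ``$z_\ell-z_\ell'$ equals $\alpha_{\ell+1}(w_{\ell+1})$ plus a coboundary'' follows from the vanishing $H^{\ell-1}(\X,\ZK\ell)=0$ for $1\le\ell\le k-1$ (Proposition~\ref{prop:vanishing}~ii): every proper subset of a primitive collection spans a cone, so $\sheaf O(-D_{K'})$ has no cohomology at all for $K'\subsetneq K$), and your final step uses only $\alpha_1\circ\alpha_2=0$. Replace the row-exactness appeal with this column vanishing and ii) is complete.

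The step you flag as the main obstacle in iii) has a much shorter resolution than the filtration-comparison ladder you propose; no auxiliary vanishing is needed. The staircase \emph{is}, term by term, the \v{C}ech computation of the connecting maps $\delta_j$ of (\ref{eq:endSequences}): since the full complex (\ref{eq:koszulType}) is exact, $\alpha_{j+1}(z_{j+1})$ lies in $C^{j}(\Ss kj)$ (being in $\mathrm{im}\,\alpha_{j+1}=\ker\alpha_j$), and $z_{j+1}\in C^{j}(\ZK{j+1})\subset C^{j}(\Z k{j+1})$ is a lift of it along $\Z k{j+1}\twoheadrightarrow\Ss kj$ with $\delta(z_{j+1})=\alpha_{j+2}(z_{j+2})$. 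Hence $\delta_j[\alpha_{j+1}(z_{j+1})]=[\alpha_{j+2}(z_{j+2})]$, and composing gives
\[
\left(\delta_{k-2}\circ\cdots\circ\delta_0\right)\left(\alpha_1(z_1)\right)=i_k\left([z_k]\right),
\]
where $i_k:H^{k-1}(\X,\Z kk)\to H^{k-1}(\X,\Ss k{k-1})$ is induced by $\alpha_k$ and $[z_k]$ is viewed in $H^{k-1}(\X,\Z kk)$ via the direct summand $\ZK k\subset\Z kk$. Exactness of the long exact sequence (\ref{eq:Stuff}) at $H^{k-1}(\X,\Ss k{k-1})$ gives $\mathrm{im}\,i_k=\ker\delta_{k-1}$, so $m_k(\alpha_1(z_1))=\delta_{k-1}(i_k([z_k]))=0$. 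With these two repairs your proof coincides with the paper's.
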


\begin{proof}
For $i)$, it is enough to show that the $z_\ell$ can be successively chosen in
descending order to satisfy the required property.  
If $z_\ell$ has been chosen with $\ell>1$, then 
\[
\delta(\alpha_\ell (z_\ell))=\alpha_\ell(\delta (z_\ell))=\alpha_\ell
(\alpha_{\ell+1} (z_\ell)) =0,
\]
so that $\alpha_\ell (z_\ell)\in Z^{\ell-1}(\ZK {\ell-1})$ is a 
cocycle.  Since
$H^{\ell-1}(\ZK {\ell-1})=0$ by Proposition~\ref{prop:vanishing}, 
it follows that $\alpha_\ell (z_\ell)$
is a coboundary so can be written as $\delta (z_{\ell-1})$ for some 
$z_{\ell-1}\in C^{\ell-2}(\ZK {\ell-1})$.

For the first part of $ii)$, note that $\alpha_1(z_1)\in Z^0(\mathrm{Sym}^kW\otimes\sheaf O)$ is clear by
$\delta(\alpha_1(z_1))=\alpha_1(\delta(z_1))=\alpha_1(\alpha_2(z_2))=0$.  

Next, we prepare for an induction on $\ell$ by claiming that if 
$z_{\ell+1},\ldots,z_k$ are fixed and choices are only allowed to be made in 
$z_1,\ldots,z_\ell$, then $\alpha_1(z_1)$ is independent of the allowed 
choices.  The assertion of $ii)$ is simply this claim for $\ell=k$.

Fix $\ell<k$ and suppose that $z_r$ has been chosen for all
$r>\ell$ and we want to choose $z_\ell$ so that $\delta (z_\ell) =
\alpha_\ell(z_{\ell+1})$.  Once any $z_\ell$ is chosen, then any other
choice differs from $z_\ell$ by  
a cocycle in
$Z^{\ell-1}(\ZK \ell)$, which is necessarily a
coboundary $\delta y_\ell$ for some $(\ell-2)$-cochain $y_\ell\in C^{\ell-2}
(\ZK\ell)$, since $H^{\ell-1}(\ZK\ell)=0$.  
If $\ell=k$, a separate argument is required, but the conclusion is
the same: the only other choice for $z_k$ is to modify it by the addition
of a coboundary $\delta y_k$.

We start the induction with $\ell=1$.  Since $Z^0(\ZK 1)
=H^0(\ZK 1)=0$, there are no nontrivial cocyles, so that $z_1$
is unique and the independence is trivially true for $\ell=1$.  
Now suppose that the claim is true for some $\ell<k$ 
and we show that it is true for $\ell+1$.  As noted above, we can only
change the choice of $z_{\ell+1}$ to
$z_{\ell+1}+\delta(y_{\ell+1})$ and then we 
see what that does to the rest of the
computation.  Then $\alpha_{\ell+1}(z_{\ell+1})$ is replaced by
\[
\alpha_{\ell+1}(z_{\ell+1}+\delta(y_{\ell+1}))=\alpha_{\ell+1}(z_{\ell+1})+
\delta(\alpha_{\ell+1}(y_{\ell+1}))=\delta(z_\ell)+
\delta(\alpha_{\ell+1}(y_{\ell+1})).
\]
Thus, we can continue the computation by replacing $z_\ell$ by $z_\ell+
\alpha_{\ell+1}(y_{\ell+1})$.  Other choices for modifying $z_\ell$ are 
possible, but by the inductive hypothesis, other choices
won't affect $\alpha_1(z_1)$.
At the next step,
$\alpha_\ell(z_\ell)$ gets replaced by 
\[
\alpha_\ell(z_\ell+
\alpha_{\ell+1}(y_{\ell+1}))=\alpha_\ell(z_\ell)+\alpha_\ell
(\alpha_{\ell+1}(y_{\ell+1}))=\alpha_\ell(z_\ell),
\]
and no change in $z_{\ell-1}$ is required. The inductive hypothesis
takes care of the rest.

For $iii)$, let $i_{k}:H^{k-1}(\Z kk)\to H^{k-1}(S_{k-1}^{(k)})$
be the map in (\ref{eq:Stuff}).  Then 
we have constructed $\alpha_1(z_1)$ so that
\[
\left(\delta_{k-2}\circ\cdots\circ \delta_0\right)\left(\alpha(z_1)\right)=
i_{k}\left([z_k]\right).
\]
By the exactness of (\ref{eq:Stuff}), the image of $i_{k}$ is the kernel of
$\delta_{k-1}$.  So
\[
m_k\left(\alpha_1(z_1)\right) = \left(\delta_{k-1} \circ 
\cdots \circ \delta_0\right)
\left(\alpha(z_1)\right)=0
\]
as claimed.
\end{proof}

Note that Lemma~\ref{lem:chasediagram} gives rise to a well-defined map
\begin{equation}
\ell_K:H^{k-1}\big(\sheaf O(-D_K)\big)
\to \mathrm{Sym}^kW,\qquad
\ell_k([z_k])=\alpha_1(z_1).
\label{eq:lifttosw}
\end{equation}

We now examine the form of the image of $\ell_K$.  Let $z\in
H^{k-1}(\ZK k)$, and write $K=\{\rho_1,\ldots,\rho_k\}$.

\noindent{\em Claim:} $\ell_K(z)$ has the form
\begin{equation}
   \sum_{m_1 + m_2 + \cdots + m_k = 0} a_{\rho_1 m_1}
   \cdots a_{\rho_k m_k} \otimes p_{m_1m_2\cdots m_k},
\label{eq:formofqk}
\end{equation}
   where the $p_{m_1\cdots m_k}$ lie in $\Sym^k W$ and as before each
$m_i\in\Delta_{D_{\rho_i}} \cap M$.

To see this, we follow the definition of $\ell_K$, choosing the $z_\ell$
in decreasing order. After we have found 
$\alpha_\ell (z_\ell)$ and then make a choice
for $z_{\ell-1}$, the cocycle $\alpha_{\ell-1}(z_{\ell-1})$ depends linearly
on the coordinates $\{a_{\rho m}\}$ of the moduli space of deformations 
$\sheaf E$.  Since we compute $\alpha_\ell(z_\ell)$ for $\ell=1,\ldots,k$,
we conclude that $\ell_K(z)$ is homogeneous of degree $k$ as a function of
the $\{a_{\rho m}\}$, of the form (\ref{eq:formofqk}) without the qualifier
$m_1+\ldots+m_k=0$ in the sum.\footnote{In making this assertion, we are using
the fact the the coboundary maps are independent of bundle moduli, so that 
$z_\ell$ can recursively be chosen to have polynomial dependence on the 
$\{a_{\rho m}\}$, of degree $k-\ell$. It may only be possible to make choices
of $z_\ell$ on open subsets of bundle moduli; but since $\ell_K$ is 
well-defined we conclude that there is polynomial dependence globally in
bundle moduli.}  The restriction to $m_1+\ldots+m_k=0$ occurs because
$a_{\rho_1 m_1} \cdots a_{\rho_k m_k}$ only arises in combination with a
factor of $\chi^{m_1+\ldots+m_k}$, and
$\ell_K(z)\in H^0(\mathrm{Sym}^k W\otimes \sheaf O)$, which has pure
weight 0.  This proves the claim.

\begin{lemma}
Any primitive collection containing an edge $\rho$
necessarily contains the edge $\rho'$ if $D_{\rho'}$ is linearly 
equivalent to $D_\rho$.
\label{lem:primlin}
\end{lemma}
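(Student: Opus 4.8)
The plan is to reduce the statement to a single lattice condition and then run a cone chase in the fan. Since $\sheaf O_X(D_\rho)\simeq\sheaf O_X(D_{\rho'})$, the Weil divisor $D_\rho-D_{\rho'}$ is principal, so by the presentation (\ref{eq:pic}) of $\Pic(X)$ there is an $m\in M$ with $\langle m,v_\rho\rangle=1$, $\langle m,v_{\rho'}\rangle=-1$, and $\langle m,v_\sigma\rangle=0$ for all $\sigma\in\Sigma(1)\setminus\{\rho,\rho'\}$. (If $\rho=\rho'$ there is nothing to prove, so assume $\rho\neq\rho'$.) This element $m$ is the only use made of the hypothesis; the rest is combinatorics of $\Sigma$.

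Next I would fix a primitive collection $K$ with $\rho\in K$ and argue by contradiction, assuming $\rho'\notin K$. Since $X$ is projective, its fan $\Sigma$ is complete, so $v:=\sum_{\tau\in K}v_\tau$ lies in a unique smallest cone $\gamma\in\Sigma$, and then $v\in\text{relint}(\gamma)$, say $v=\sum_{\eta\in\gamma(1)}c_\eta v_\eta$ with every $c_\eta>0$. Pairing with $m$ and using $\rho\in K$, $\rho'\notin K$,
\[
1=\langle m,v\rangle=\sum_{\eta\in\gamma(1)}c_\eta\langle m,v_\eta\rangle .
\]
Each summand vanishes unless $\eta\in\{\rho,\rho'\}$, in which case it is $\pm1$; hence the right-hand side can equal $1$ only if $\rho\in\gamma(1)$ with $c_\rho\geq1$ (if $\rho\notin\gamma(1)$ the sum is $\leq0$; if both $\rho,\rho'\in\gamma(1)$ the sum is $c_\rho-c_{\rho'}$, again forcing $\rho\in\gamma(1)$ and $c_\rho\geq1$).

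Then I would bring in the other half of the definition of a primitive collection: $K\setminus\{\rho\}$ spans a cone $\sigma_0\in\Sigma$, which (as $\Sigma$ is smooth) is simplicial with ray set exactly $K\setminus\{\rho\}$, so $u:=\sum_{\tau\in K\setminus\{\rho\}}v_\tau\in\text{relint}(\sigma_0)$. But $u=v-v_\rho=(c_\rho-1)v_\rho+\sum_{\eta\in\gamma(1)\setminus\{\rho\}}c_\eta v_\eta$ also lies in $\gamma$, since $c_\rho-1\geq0$. Therefore $\sigma_0\cap\gamma$ is a face of $\sigma_0$ containing the relative-interior point $u$, so $\sigma_0\cap\gamma=\sigma_0$; i.e.\ $\sigma_0$ is a face of $\gamma$ and $K\setminus\{\rho\}\subseteq\gamma(1)$. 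Combined with $\rho\in\gamma(1)$ this gives $K\subseteq\gamma(1)$, whence $\text{Cone}(v_\tau\mid\tau\in K)$ is a face of the smooth cone $\gamma$, hence itself a cone of $\Sigma$ — contradicting that $K$ is primitive. Therefore $\rho'\in K$.

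The substantive step, and the place a loose argument would slip, is the implication $u\in\text{relint}(\sigma_0)\cap\gamma\Rightarrow\sigma_0\subseteq\gamma$: this is exactly what upgrades ``$v$ lies over a cone that uses $\rho$'' into ``all of $K$ spans a cone of $\Sigma$'', producing the contradiction with primitivity; it relies only on the standard facts that in a fan the intersection of two cones is a common face, and that a face meeting the relative interior of a cone is the whole cone. A slicker packaging of the same content uses Batyrev's primitive relation $\sum_{\tau\in K}v_\tau=\sum_\eta c_\eta v_\eta$ together with the standard fact $\gamma(K)(1)\cap K=\emptyset$ (cf.\ \cite{Cox:2010tv}), since pairing that relation with $m$ contradicts $c_\eta>0$ at once. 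The degenerate possibilities ($v=0$, or $K$ a singleton) are automatically covered by the case analysis above.
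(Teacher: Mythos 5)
Your proof is correct, and it reaches the conclusion by a genuinely different route in one key respect. Both arguments start identically: from $D_\rho\sim D_{\rho'}$ you extract the character $m$ with $\langle m,v_\rho\rangle=1$, $\langle m,v_{\rho'}\rangle=-1$, zero elsewhere, and pair it against the primitive relation $\sum_{\tau\in K}v_\tau=\sum_\eta c_\eta v_\eta$. The paper then finishes in one line by invoking Batyrev's additional fact that the cone $\gamma$ containing $\sum_{\tau\in K}v_\tau$ in its relative interior satisfies $\gamma(1)\cap K=\emptyset$: with that, the right-hand side of the paired identity is nonnegative while the left-hand side is $-1$ when $\rho'\notin K$ (the paper's $m$ has the opposite sign convention from yours), an immediate contradiction. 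You do not assume this disjointness; instead you use the pairing only to force $\rho\in\gamma(1)$ with $c_\rho\ge 1$, and then run a self-contained cone chase --- $u=\sum_{\tau\in K\setminus\{\rho\}}v_\tau$ lies in $\mathrm{relint}(\sigma_0)\cap\gamma$, hence $\sigma_0$ is a face of $\gamma$, hence $K\subseteq\gamma(1)$ and $K$ spans a cone, contradicting primitivity. The step you flag as substantive (a face meeting the relative interior of a cone is the whole cone, plus smoothness of $\gamma$ to conclude that any subset of its rays spans a face) is sound. What your version buys is independence from the disjointness lemma, at the cost of a longer argument; your closing remark about the ``slicker packaging'' via $\gamma(1)\cap K=\emptyset$ is in fact exactly the paper's proof. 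Note also that the paper reuses the disjointness observation immediately after the lemma (to show that linearly equivalent edges among the $w_i$ carry equal coefficients $c_i$), so the citation is not wasted there.
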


\begin{proof}
To see this, suppose $K=\{\rho_1,\ldots,\rho_k\}$ is a primitive collection.  
Following \cite{Batyrev:1993}, we have that
$v_{\rho_1}+\ldots+v_{\rho_k}$ lies in the relative interior of a unique 
cone
$\sigma\in\Sigma$, whose set of edges are disjoint from $K$.  
Letting the edges of
$\sigma$ be generated by primitive vectors
$w_1,\ldots,w_l$ we then have an identity
\begin{equation}
v_{\rho_1}\: + \: \ldots \: + \: v_{\rho_k} \: = \: \sum_{i=1}^l c_i w_i
\label{eq:primrel}
\end{equation}
with all $c_i>0$.

Suppose there is an edge $\rho'$ such that $D_{\rho'}$
is linearly equivalent to one of the $D_{\rho_i}$.  Then there exists
an $m\in M$ such that
\[
\langle m,v_{\rho'}\rangle = 1, \: \: \:
\langle m,v_{\rho_i}\rangle = -1, 
\: \: \:
\langle m,v'\rangle = 0
\]
where $v'$ is any edge generator distinct from $v_{\rho_i}$ or $v_{\rho'}$.
Applying $\langle m,\cdot\rangle$ to (\ref{eq:primrel}) we obtain
\begin{equation}
\label{mrel}
\sum_{j=1}^k\langle m,v_{\rho_j}\rangle
\: = \: \sum_{i=1}^l c_i \langle m,w_i\rangle.
\end{equation}
The only negative term in this equation is $\langle m,v_{\rho_i}\rangle$ 
on the left
hand side.  Therefore the left hand side must also contain a strictly
positive term. This can only happen if $v$ is one of the $v_{\rho_j}$.
\end{proof}

For later use, we note that the same argument shows that the $\{w_i\}$ are
closed under the relation
of linear equivalence of the corresponding divisors, and
in fact linearly equivalent terms have to arise with identical coefficients
$c_i$ in (\ref{eq:primrel}) as
the only way to get the identity $0=c_i-c_j$. 

We let $[K]$ denote the set of equivalence classes of edges in $K$
induced by linear equivalence of the corresponding divisors.
Similarly, we let $[K^-]$ denote the set of equivalence classes of the
edges spanned by the $w_i$ induced by linear equivalence of the
corresponding divisors.

We now discuss the linear terms of $E$ in more detail.
For each $\rho$, write 
\[
E_\rho=\sum_{m\in\Delta_{D_\rho}\cap M}
a_{\rho m}\; x_\rho\prod_{\rho'} x_{\rho'}^{\langle
m,v_\rho'\rangle}.
\]
The linear monomials occurring in $E_\rho$ come in two forms: 
$x_\rho$, and $x_{\rho'}$ for certain $\rho'\ne\rho$.  In the first case,
the monomial $x_\rho$ corresponds to $m = 0 \in \Delta_{D_\rho}$.
In the second case, $x_{\rho'}$ for $\rho'\ne \rho$ can be a term in
$E_\rho$ if for some $m \in M$ we have
\[
\langle m,v_{\rho'}\rangle =1, \: \: \:
\langle m,v_{\rho}\rangle  =-1, 
\text{ and }
\langle m,v_{\rho''}\rangle =0
\]
for all $\rho''\ne\rho,\rho'$.

Note the linear monomials $x_{\rho'}$ correspond to linear equivalences
$D_{\rho'}\sim D_\rho$.  The associated
$m$ is the one corresponding to the character whose divisor satisfies
\[
(\chi^m)=D_{\rho'}-D_\rho.
\]
Now partition the divisors $D_\rho$ into linear equivalence classes,
inducing a corresponding equivalence relation on $\Sigma(1)$:
the linear part of $E_\rho$ becomes
\begin{equation}
E_\rho^{\mathrm{lin}}=\sum_{\rho'\in[\rho]}a_{\rho\rho'}x^{\rho'},
\label{eq:elin}
\end{equation}
where $[\rho]$ is the equivalence class of $\rho$ under the equivalence 
relation just described.

For each such equivalence class $c=[\rho]$, we define a $|c|\times|c|$ matrix
$A_c$ with entries in $W$, where $|c|$ is the size of the equivalence class,
i.e.\ the number of divisors $D_{\rho'}$ linearly equivalent to $D_\rho$
(including $D_\rho$ itself).  The rows and columns of $A_c$ can be naturally
identified with the edges comprising the equivalence class.  The 
$(\rho,\rho')$ entry of $A_c$ is the coefficient $a_{\rho\rho'}$ in
(\ref{eq:elin}) above.

\begin{definition}
The matrix $A_c$  is the \emph{linear coefficient matrix} associated
to the divisor class corresponding to $c$.
   \label{def:linearCoefficients}
\end{definition}

\begin{remark}
   Note that this matrix is precisely the one denoted $A_{(\alpha)}^a$
   in equation 2.5 of \cite{McOrist:2007kp}.
\end{remark}

\begin{remark}
For $\sheaf E=TX$ with its standard Euler sequence, 
$A_c$ is diagonal with diagonal terms $[D_\rho]$.
\end{remark}

\begin{lemma}
   The image of $\ell_K$ only depends on 
   the linear terms of $E$.
   \label{lem:nonlinearIndependent}
\end{lemma}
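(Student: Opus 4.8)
The plan is to trace through the diagram chase defining $\ell_K$ (Lemma~\ref{lem:chasediagram}) and show that only the linear part $E^{\mathrm{lin}}$ of $E$ can contribute to the final output $\alpha_1(z_1)\in\Sym^kW$. The key observation is a weight/degree bookkeeping argument. Recall from the Claim following \eqref{eq:lifttosw} that $\ell_K(z)$ is homogeneous of degree $k$ in the bundle moduli $\{a_{\rho m}\}$, with a contribution $a_{\rho_1 m_1}\cdots a_{\rho_k m_k}$ appearing only when $m_1+\cdots+m_k = 0$ (so that the accompanying character $\chi^{m_1+\cdots+m_k}$ is trivial, consistent with $\ell_K(z)$ having pure weight $0$). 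Now I would examine \emph{which} monomials $a_{\rho_1 m_1}\cdots a_{\rho_k m_k}$ can actually occur, arguing that each individual $m_i$ must already be such that $x_{\rho_i}\prod_{\rho'} x_{\rho'}^{\langle m_i, v_{\rho'}\rangle}$ is a \emph{linear} monomial in the homogeneous coordinates — i.e. each $a_{\rho_i m_i}$ is a linear coefficient in the sense of \eqref{eq:elin}.

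The mechanism is as follows. The maps $\alpha_j$ of \eqref{eq:koszulMaps} act by contracting against $E^\vee$, whose components $E_\rho$ live in $S_{[D_\rho]}\otimes W$. Writing $z_k\in Z^{k-1}(\ZK k)$ and tracking the \v{C}ech degrees, at each application of $\alpha_\ell$ we replace one factor of $\sheaf O(-D_{\rho_i})$ by multiplication by $E_{\rho_i}$, landing us in a \v{C}ech cochain group whose coefficient sheaf has picked up a twist. Since $z_k$ represents the generator of $H^{k-1}(\X,\sheaf O(-D_K))$, which is purely of weight $0$ (Lemma~\ref{lem:invariantCohomology}), and since each intermediate cocycle $\alpha_\ell(z_\ell)$ must also lie in a weight-$0$ cohomology group (by the same lemma applied to $\ZK {\ell}$, all of whose summands are of the form $\sheaf O(-D_J)$ for $J\subsetneq K$ tensored with $\Sym W$), the net $M$-weight is constrained to vanish at every stage. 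This forces, term by term, that the monomial $\prod x_{\rho'}^{\langle m_i,v_{\rho'}\rangle}$ coming out of the $a_{\rho_i m_i}\chi^{m_i}$ expansion of $E_{\rho_i}$ together with the $x_{\rho_i}$ it carries must be accounted for by the trivialization already present; the upshot is that only the terms where $x_{\rho_i}\prod_{\rho'} x_{\rho'}^{\langle m_i,v_{\rho'}\rangle}$ is linear — $m_i=0$ giving $x_{\rho_i}$, or $m_i$ giving $x_{\rho'}$ with $D_{\rho'}\sim D_{\rho_i}$ — can survive, because a nonlinear monomial cannot be cancelled within the weight-$0$ constraint once all $k$ factors are assembled. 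Combined with Lemma~\ref{lem:primlin}, which guarantees that whenever a primitive collection $K$ contains $\rho_i$ it also contains every $\rho'$ with $D_{\rho'}\sim D_{\rho_i}$, so that such linear terms $a_{\rho_i\rho'}$ pair edges \emph{within} $K$, this shows $\ell_K(z)$ is a polynomial purely in the entries of the linear coefficient matrices $A_c$.

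The cleanest way to organize this is probably to run the diagram chase using an \emph{explicit} \v{C}ech resolution adapted to the affine cover coming from the maximal cones, and to observe that at each step the choice of $z_{\ell-1}$ solving $\delta(z_{\ell-1})=\alpha_\ell(z_\ell)$ can be made to depend polynomially on the $a_{\rho m}$ (this is already noted in the footnote after \eqref{eq:formofqk}); then substituting the decomposition $E_\rho = E_\rho^{\mathrm{lin}} + E_\rho^{\mathrm{nonlin}}$ and expanding multilinearly in the $k$ insertions, each pure-nonlinear or mixed term produces a \v{C}ech cochain whose coefficient sheaf is $\sheaf O(D)$ for a divisor $D$ strictly larger than $-D_{K'}$ for the relevant sub-collection, hence (by the Remark after Proposition~\ref{prop:vanishing} and the projectivity argument of Lemma~\ref{lem:invariantCohomology}) contributes $0$ to the relevant intermediate cohomology, and so drops out of $\alpha_1(z_1)$. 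I expect the \textbf{main obstacle} to be making the ``a nonlinear monomial cannot be cancelled'' step fully rigorous: a priori a product of $k$ coefficients, some linear and some nonlinear, could conceivably have total weight $0$, so one must argue not just about total weight but about the weight filtration appearing in each intermediate group $H^{\ell-1}(\ZK \ell)$ — i.e. show that the nonlinear contributions land in weight-nonzero pieces \emph{at the first step they appear} and are killed there, rather than only at the end. This requires carefully pinning down the grading on the generalized Koszul complex and invoking Lemma~\ref{lem:invariantCohomology} at each of the $k$ stages rather than only once.
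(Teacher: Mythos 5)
You have correctly reduced the problem to the right statement — starting from the Claim (\ref{eq:formofqk}), show that in any surviving monomial $a_{\rho_1 m_1}\cdots a_{\rho_k m_k}$ with $m_1+\cdots+m_k=0$, each individual $m_j$ is a ``linear'' weight — but the decisive step is exactly the one you defer as the ``main obstacle,'' and the mechanism you sketch for it does not work as stated. The intermediate cochains $z_\ell$ are not cocycles and are not confined to weight $0$ (Lemma~\ref{lem:invariantCohomology} constrains the cohomology groups, not the cochain or coboundary spaces), and the maps $\alpha_\ell$ are not torus-equivariant once the $a_{\rho m}$ with $m\neq 0$ are switched on — each term $a_{\rho m}\chi^m$ shifts the $M$-weight by $m$ — so ``the net $M$-weight vanishes at every stage'' is not available, and a stage-by-stage weight-filtration analysis would be substantially harder than you indicate. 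Your worry that ``a product of $k$ coefficients, some linear and some nonlinear, could conceivably have total weight $0$'' is precisely the point that must be, and can be, ruled out directly.

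The paper closes this gap with no further cohomological input: it is a two-line convexity argument on the weights themselves. Since $m_j\in\Delta_{D_{\rho_j}}$, one has $\langle m_j,v_{\rho_j}\rangle\ge -1$ and $\langle m_j,v_{\rho}\rangle\ge 0$ for every $\rho\ne\rho_j$. Pairing the relation $m_1+\cdots+m_k=0$ with each $v_{\rho_\ell}$ gives $\sum_{i=1}^{k}\langle m_i,v_{\rho_\ell}\rangle=0$, a vanishing sum of integers all $\ge 0$ except possibly the single term $\langle m_\ell,v_{\rho_\ell}\rangle\ge -1$. Hence for each $j$ either $m_j=0$, or $\langle m_j,v_{\rho_j}\rangle=-1$, $\langle m_j,v_{\rho_i}\rangle=1$ for a unique $i\ne j$, and all other pairings vanish; since the $v_\rho$ span $N_{\RR}$, this pins down $m_j$ as the character with $(\chi^{m_j})=D_{\rho_i}-D_{\rho_j}$, i.e.\ $a_{\rho_j m_j}$ is an entry of a linear coefficient matrix $A_c$. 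So the total-weight-zero constraint alone already forces every factor to be linear — no mixed linear/nonlinear monomial of total weight zero exists — and the Čech-level bookkeeping you propose is unnecessary. As written, your proof is incomplete at its central step.
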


\begin{proof}
We show that each term of (\ref{eq:formofqk}) depends only on the linear terms.
Since
   each $m_j$ is in $\Delta_{D_{\rho_j}}$, we know that
   $\langle m_j, v_{\rho_j} \rangle \geq -1$ while 
   $\langle m_j, v_\rho \rangle \geq 0$ for  $\rho\ne \rho_j$.  
Then the vanishing of
$m_1+\ldots+m_k$ implies that $\sum_{i=1}^k \langle m_i, v_{\rho_\ell}\rangle = 0$ for
   all $\ell$, so we are left with two possibilities for each $j$:

   \begin{enumerate}
      \item 
        $m_j = 0$
      \item 
         $m_j \ne 0$, and there exists a unique $i$ with $i \ne j$ such
that
         \[
         \langle m_j, v_{\rho_\ell} \rangle = 
         \begin{cases}
            1 & \ell = i\\
            -1 & \ell = j \\
            0 & \text{otherwise}.
         \end{cases}
         \]
   \end{enumerate}
As we have seen above, these $m_i$ each are associated with linear terms.
\end{proof}

We can at last explicitly describe the image of $\ell_K$.

\begin{lemma}
   Let $K$ be a primitive collection and $[K] = \{ [D_\rho] | \rho \in
   K\}$ as before.  Up to $\CC$-multiple, the image of $\ell_K$ is
   \[
Q_K:=   \prod_{c \in [K]} \det A_c,
   \]
   \label{lem:collationImage}
where the product is over the linear equivalence classes contained in $K$.
\end{lemma}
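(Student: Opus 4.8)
The plan is to compute $\ell_K(z)$ explicitly by running the diagram chase of Lemma~\ref{lem:chasediagram} through the subcomplex $\ZK\bullet$, using the \v Cech description of $H^{k-1}(\ZK k)\simeq H^{k-1}(X,\sheaf O(-D_K))$ and the fact, established in Lemma~\ref{lem:nonlinearIndependent}, that only the linear part $E^{\mathrm{lin}}_\rho=\sum_{\rho'\in[\rho]}a_{\rho\rho'}x^{\rho'}$ of the map $E$ contributes. First I would fix a convenient \v Cech representative for the generator $z_k$ of $H^{k-1}(X,\sheaf O(-D_K))\simeq\CC$ coming from Proposition~\ref{prop:vanishing}: by Theorem~\ref{thm:simp} this class lives in weight $0$ and is, up to scale, the unique reduced cohomology class of the boundary $\partial$ of the $(k-1)$-simplex on the vertices $v_{\rho_1},\dots,v_{\rho_k}$. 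Concretely one can take the cover by the affine opens $U_{\rho_i}$ and write $z_k$ as the alternating sum associated to this simplicial sphere. Then I would lift successively: $\alpha_k(z_k)\in Z^{k-1}(\ZK{k-1})$ is a coboundary, pick $z_{k-1}$ with $\delta z_{k-1}=\alpha_k(z_k)$, and so on down to $z_1$, and finally read off $\alpha_1(z_1)\in\Sym^k W$.

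The key observation that makes the computation tractable is that the maps $\alpha_j$ in \eqref{eq:koszulMaps} are built from $E^\vee$, and by Lemma~\ref{lem:nonlinearIndependent} at each stage only the linear coefficients $a_{\rho\rho'}$ with $\rho,\rho'$ linearly equivalent survive in a weight-$0$ answer. Hence the whole chase decouples along the partition of $K$ into its linear-equivalence classes $[K]$: for each class $c\in[K]$ of size $|c|$, the contribution is exactly the diagram chase for the ``toy'' Koszul complex on $\oplus_{\rho\in c}\sheaf O(-D_\rho)$ with the linear map given by the matrix $A_c$, and the answer of that toy chase is $\det A_c$ (this is the standard computation identifying the top coboundary in a Koszul-type resolution of a full-rank linear map with its determinant — it is the statement that the extension class of the Koszul complex of a square matrix $A$ is $\det A$). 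Since the generator $z_k$ is a product over the classes of the corresponding boundary-of-simplex generators (compatibly with the decomposition $\exterior\ell Z_K=\bigoplus\bigotimes_c\exterior{\ell_c}Z_c$), the lifts multiply and $\alpha_1(z_1)=\prod_{c\in[K]}\det A_c$ up to an overall nonzero constant. I would phrase this cleanly by first doing the single-class case $K=c$ as a lemma (or inline), then invoking multiplicativity of $\ell_K$ under the Künneth-type splitting of the complex $\ZK\bullet$ induced by the partition into linear-equivalence classes, which is compatible with the product structure $\ZK k\simeq\sheaf O(-D_K)$ and with the way the generator of $H^{k-1}$ factors.

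The main obstacle I anticipate is bookkeeping rather than conceptual: one has to be careful that the lifts $z_\ell$ can be chosen compatibly with the tensor decomposition over $[K]$ — i.e. that solving $\delta z_{\ell-1}=\alpha_\ell(z_\ell)$ respects the product structure — and that the signs and orderings of the wedge factors, together with the signs in \eqref{eq:koszulMaps} and in the simplicial-sphere generator, combine to give an honest $\prod_c\det A_c$ rather than a signed permanent or a sign-ambiguous expression. Since the statement is only asserted ``up to $\CC$-multiple,'' the overall constant need not be tracked, which removes the most error-prone part. A secondary point to handle is the case where some $A_c$ is singular: then $\det A_c=0$, and one must check consistency with the fact that singularity of a linear coefficient matrix forces $\sheaf E$ to fail local freeness along $L_c$ — but for the purposes of this lemma we simply take $E$ as given with $\sheaf E$ locally free (so in particular each relevant chase goes through) and record the determinant, which may legitimately be zero as an element of $\Sym^k W$. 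I would close by noting that when $\sheaf E=T_X$ each $A_c$ is diagonal with entries $[D_\rho]$, so $Q_K=\prod_{\rho\in K}[D_\rho]$, recovering the classical Stanley–Reisner generator and confirming the normalization.
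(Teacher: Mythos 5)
Your overall strategy (run the diagram chase of Lemma~\ref{lem:chasediagram} explicitly, reduce to one linear-equivalence class at a time, and recognize the single-class answer as a determinant) is reasonable in spirit, but the step that carries all the weight --- the ``K\"unneth-type splitting'' of the chase over the classes $c\in[K]$ --- does not hold as stated, and this is a genuine gap rather than bookkeeping. First, the complex $\ZK{\bullet}$ is the degree-$k$ symmetric power of the two-term complex $Z_K\to W\otimes\sheaf O$, and while $\exterior{\ell}Z_K$ does split as $\bigoplus\bigotimes_c\exterior{\ell_c}Z_c$, the target $W\otimes\sheaf O$ is shared by all classes, so $\ZK{\bullet}$ is \emph{not} a tensor product of per-class complexes. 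Second, there are no ``boundary-of-simplex generators'' for the individual classes: for $c\in[K]$ with $K\cap c\subsetneq K$, the set $K\cap c$ is a proper subset of a primitive collection, hence spans a cone of $\Sigma$, and Proposition~\ref{prop:vanishing}~ii) kills \emph{all} cohomology of $\sheaf O(-D_{K\cap c})$; moreover the degrees do not add up, since $\sum_c(|c|-1)=k-|[K]|\ne k-1$ when $|[K]|>1$. What is true is that $V_{-D_K,0}\cong S^{k-2}$ is the topological \emph{join} of the boundary spheres of the per-class simplices, but turning that into a factorization of the lifts $z_\ell$ is precisely the hard part you defer, and it is not automatic. (Your appeal to ``the extension class of the Koszul complex of a square matrix is its determinant'' is also essentially the single-class case of the lemma itself, so it should be proved, not cited as standard.)

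The paper sidesteps all of this with a symmetry argument you may prefer: by (\ref{eq:formofqk}) and Lemma~\ref{lem:nonlinearIndependent}, $\ell_K(\gamma)$ is a polynomial in the linear coefficients $a_{\rho\rho'}$ that is multilinear in the rows of the matrices $A_c$ (one factor $a_{\rho_j m_j}$ per edge $\rho_j\in K$). Transposing two edges $\rho,\rho'$ in the same class reorders $K$, hence reverses the orientation of $S^{k-2}$ and replaces $\gamma$ by $-\gamma$, while merely swapping $E_\rho$ and $E_{\rho'}$; so $\ell_K(\gamma)$ is completely antisymmetric under row permutations within each block $A_c$. A multilinear function of the rows of a square matrix that is alternating in those rows is a scalar multiple of the determinant, so $\ell_K(\gamma)$ is forced, up to $\CC$-multiple, to equal $\prod_{c\in[K]}\det A_c$ --- no explicit \v{C}ech representative, lift, or sign-tracking is needed. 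If you want to salvage your computational route, you would need to either restrict to $|[K]|=1$ and handle the general case by this antisymmetry argument anyway, or honestly construct compatible lifts adapted to the join decomposition, which is substantially more work.
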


\begin{proof}
  Our application of Theorem~\ref{thm:simp} to compute
  \[H^{k-1}(X,\sheaf O(-D_K))\simeq\widetilde{H}^{k-2}(S^{k-2})\simeq\CC\]
  gives us more than the computation of the dimension of the
  cohomology group. If we choose an orientation of $S^{k-2}$ (determined by
  an ordering of the edges in $K$), the fundamental class of $S^{k-2}$
  gives an almost
  canonical generator $\gamma\in H^{k-1}(X,\sheaf O(-D_K))$ 
depending only on this orientation.

If we now choose any two edges $\rho$ and $\rho'$ in the same
equivalence class and interchange them in the ordering of $K$, 
this changes the orientation of
   $S^{k-2}$, and hence the sign of $\gamma$, while switching $E_\rho$
   and $E_{\rho'}$ and leaving everything else about the input data unchanged. 
Then $\ell_K(\gamma)$ is changed to $-\ell_K(\gamma)$.
Thus, given an ordered primitive collection,
 $\ell_K(\gamma)\in\mathrm{Sym}^kW$ is a degree $k$ polynomial function of
the $a_{\rho m}$,
   completely antisymmetric within each equivalence class $c$ of edges
corresponding to linear equivalence of divisors. We conclude
that up to multiple it is
   necessarily the product of determinants associated with blocks of
   linearly equivalent $D_i$.
\end{proof}

Since the expression $\det A_c$ will arise frequently in our computations,
we write
\[
Q_c:= \det A_c.
\]
For a perfect analogy with (\ref{eq:toriccohomology}), we would have to 
express the polymology as a quotient of the
homogeneous coordinate ring.
Instead, we content ourselves with a description of the polymology as a 
quotient of $\mathrm{Sym}^*W$.  We define 
the polymological analogue of the Stanley-Reisner ideal in $\Sym^*W$

\begin{definition}
The \emph{Stanley-Reisner ideal
   of $\sheaf E$} is 
   \begin{equation}
      SR(\X, \sheaf E) = 
      \left ( Q_K
      \;|\; K \text{ a
      primitive collection of } \Sigma\right) \subset \Sym^*W.
      \label{eq:polymologyStanleyReisner}
   \end{equation}
   \label{def:polymologyStanleyReisner}
\end{definition}

\begin{remark}
If $\sheaf E=TX$, then $Q_K=\prod_{\rho\in K}[D_\rho]$, and $SR(\X,T_X)$ is the
image of the usual Stanley-Reisner ideal $SR(\X)$ in $\mathrm{Sym}^*W$ 
under the quotient (\ref{eq:swlineq}).
\end{remark}

We now have the main theorem of this section.

\begin{theorem}
   Let $\X$ be a smooth projective toric variety, $W = \Pic(\X)
   \otimes_\ZZ \CC$, and $\sheaf E \to \X$ 
   a toric deformation of the tangent bundle of $\X$. 
   Then the polymology of $\sheaf E$ is isomorphic as a graded algebra to
   \begin{equation}
      \poly \simeq { \Sym^*W}/{SR(\X, \sheaf E)}
      \label{eq:polymologyIsomorphism}
   \end{equation}
\label{thm:polymologyIsomorphism}
\end{theorem}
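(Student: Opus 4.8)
The plan is to prove \eqref{eq:polymologyIsomorphism} in two stages: first establish that the $Q_K$ lie in the kernel of the natural surjection $\mathrm{Sym}^*W \to \poly$ (giving a surjection $\mathrm{Sym}^*W/SR(\X,\sheaf E) \twoheadrightarrow \poly$), and then show this map is injective by a dimension count in each graded degree. The surjectivity of the algebra map is immediate: Proposition~\ref{prop:mk} shows each $m_k \colon \Sym^k W \to \polypq kk$ is surjective, and Corollary~\ref{cor:polyDiagonal} says $\poly$ is concentrated in bidegrees $(k,k)$, so the $m_k$ together give an algebra surjection from $\Sym^* W$. That the $Q_K$ map to zero is exactly the content of Lemma~\ref{lem:chasediagram}(iii) combined with Lemma~\ref{lem:collationImage}: $Q_K$ is (up to scalar) the image of $\ell_K$, which lands in $\ker m_k$. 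So $SR(\X,\sheaf E)$ is contained in the kernel of the algebra map and we get the induced surjection.

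The heart of the matter is the reverse inequality $\dim_\CC \big(\Sym^*W/SR(\X,\sheaf E)\big)_k \le \dim_\CC \polypq kk$ for each $k$. I would prove this by an upper-semicontinuity / specialization argument in the bundle moduli $\{a_{\rho m}\}$. The key observations are: (a) $\dim \polypq kk$ is \emph{constant} in the bundle moduli, since by Proposition~\ref{prop:gradedIsomorphism} the Euler characteristic consideration plus the vanishing of off-diagonal cohomology forces $\dim \polypq kk = \dim H^k(\X,\exterior k\Omega^1_\X) = \dim H^{k,k}(\X)$ — more precisely, running the same spectral-sequence/exact-sequence chase as in that proof computes $\dim \polypq kk$ in terms of the fixed cohomology of the $\Z kj$, which are bundle-independent; and (b) by Lemma~\ref{lem:nonlinearIndependent} the ideal $SR(\X,\sheaf E)$ depends only on the linear coefficient matrices $A_c$, and by the remark following Definition~\ref{def:linearCoefficients}, the specialization $\sheaf E = T_\X$ corresponds to taking each $A_c$ diagonal with entries $[D_\rho]$. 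For that specialization, $Q_K$ becomes $\prod_{\rho\in K}[D_\rho]$ and $SR(\X,T_\X)$ is the image of the classical Stanley-Reisner ideal under \eqref{eq:swlineq}, so $\Sym^*W/SR(\X,T_\X) \cong S/(P(\X)+SR(\X)) \cong H^*(\X,\CC)$ by \eqref{eq:toriccohomology}, which has the right dimension $\dim H^{k,k}(\X)$ in degree $k$. Since $\dim \big(\Sym^*W/(\text{ideal gen.\ by }k\text{ polynomials})\big)_k$ is upper semicontinuous as those polynomials vary in a family, and it achieves the value $\dim\polypq kk$ at the special point $\sheaf E = T_\X$, it is $\le \dim\polypq kk$ everywhere; combined with the surjection this forces equality of dimensions in every degree, hence the map is an isomorphism.

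The main obstacle is making the semicontinuity argument rigorous, specifically pinning down that the relevant family is well-behaved: the $Q_K$ are explicit polynomials in the $a_{\rho m}$ (products of determinants of matrices whose entries are the linear coefficients, valued in $W$), so the ideal $SR(\X,\sheaf E)$ is the specialization at a point of a family of ideals over the affine space of linear coefficient data, and one must check that the set of linear-coefficient tuples actually arising from \emph{locally free} deformations $\sheaf E$ is nonempty and contains the point corresponding to $T_\X$ (it does, trivially) and that one can connect an arbitrary such $\sheaf E$ to $T_\X$ within the locus where the dimension count applies — but in fact one does not need connectedness: upper semicontinuity of fiber dimension of the quotient holds over the whole affine space of coefficient data, and the bound at the single point $T_\X$ suffices. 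A secondary point requiring care is confirming that for \emph{every} toric deformation the number of generators of $SR(\X,\sheaf E)$ (one $Q_K$ per primitive collection) matches the classical case, so that the quotient really is a specialization of the \emph{same} family; this is automatic since primitive collections are a combinatorial invariant of $\Sigma$. One should also double-check degenerate behavior when some $A_c$ is singular for special moduli — but since we only need the inequality in one direction and it goes the favorable way under specialization, such degenerations only help.
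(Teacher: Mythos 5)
Your first stage (the surjection $\Sym^*W/SR(\X,\sheaf E)\twoheadrightarrow\poly$) is correct and is essentially the first half of the paper's argument, and your observation (a) is also sound: since $\chi(\exterior k\sheaf E^\vee)$ is constant in the (flat) family of deformations and all off-diagonal cohomology vanishes by Proposition~\ref{prop:gradedIsomorphism}, one gets $\dim\polypq kk=(-1)^k\chi(\exterior k\sheaf E^\vee)=h^{k,k}(\X)$ for \emph{every} toric deformation. The problem is the second stage. Upper semicontinuity of $\dim\bigl(\Sym^*W/(Q_K)\bigr)_k$ over the affine space of coefficient data tells you that the locus where this dimension is $\ge d$ is \emph{closed}; hence the value $h^{k,k}(\X)$ attained at the single point $\sheaf E=T_\X$ bounds the dimension only on a Zariski-open neighborhood of $T_\X$ (equivalently, generically). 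It does \emph{not} bound it everywhere: an upper semicontinuous function can jump up on a proper closed subset of the locally free locus that misses $T_\X$, and connectedness of that locus does not prevent this. So your argument proves the theorem only for $\sheaf E$ generic, or sufficiently close to $T_\X$, not for an arbitrary toric deformation. Your closing remark that degenerations of the $A_c$ ``only help'' is in fact backwards: if some $\det A_c$ degenerates, the ideal $SR(\X,\sheaf E)$ shrinks and the quotient grows, which is exactly the unfavorable direction for the inequality $\dim\bigl(\Sym^*W/SR(\X,\sheaf E)\bigr)_k\le\dim\polypq kk$ you need. (Combining your surjection, which gives ``$\ge$'' everywhere, with semicontinuity still only yields equality on a dense open set.)

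The paper closes this gap not by a dimension count but by computing $\ker m_k$ exactly for every $\sheaf E$: in the exact sequences \eqref{eq:Stuff} each coboundary $\delta_{\ell-1}$ is surjective with kernel equal to the image of $H^{\ell-1}(\X,\Z k\ell)$, which by Proposition~\ref{prop:vanishing} is a sum of one-dimensional contributions indexed by primitive collections of size $\ell$ tensored with $\Sym^{k-\ell}W$ (Remark~\ref{rem:identifykernel}); tracing these back to $\Sym^kW$ via the maps $\ell_K$ of Lemma~\ref{lem:chasediagram} and identifying their images as the $Q_K$ via Lemma~\ref{lem:collationImage} shows that $\ker m_k$ is precisely the degree-$k$ part of $SR(\X,\sheaf E)$, with no genericity hypothesis. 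To rescue your approach you would need to show that the Hilbert function of the ideal $(Q_K)$ is actually \emph{constant} on the whole locally free locus, which in practice amounts to redoing this kernel computation.
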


\begin{proof}
The map
$m_k:\Sym^*W\to\poly$ is surjective by Proposition~\ref{prop:mk}.  We only have
to note that its kernel is $SR(\X, \sheaf E)$, by (\ref{eq:Stuff}), Remark
\ref{rem:identifykernel}, and Lemma \ref{lem:collationImage}.
\end{proof}

\section{Quantum sheaf cohomology}
\label{sec:quantum}

In the following, we denote the Mori cone of $\X$ by $\mori \X \subset
H_2(\X, \RR)$, and its integral points by $\moriz \X = \mori \X
\cap H_2(\X, \ZZ)$.  Since for a smooth toric variety
the Mori cone is generated by the curves associated to the cones in
$\Sigma(n-1)$ (see e.g. \cite{Cox:2010tv}), $\mori \X$ is polyhedral.

\subsection{Moduli space}

For each $\beta \in \moriz\X$, we describe the GLSM moduli space
variety $X_\beta$ after \cite{Witten:1993yc,Morrison:1994fr}.  We will
think of $X_\beta$ as a 
compactification of the space of holomorphic maps $\PP^1\to X$ of
class $\beta$, although in the GLSM the $\beta$ arise 
instead as a natural index for
the topological type of the gauge bundle on $\PP^1$.  

Fix a $\beta \in \moriz \X$, and let $d_\rho^\beta=
D_\rho\cdot\beta$.  An actual map $f:\PP^1\to \X$ can be described in 
homogeneous coordinates  as
\begin{equation}
x_\rho=f_\rho,\qquad f_\rho\in H^0(\PP^1,
\sheaf O(d_\rho^\beta)).\footnote{In the GLSM, the  $f_\rho$ are identified with 
the zero modes of certain charged chiral fields in the theory.}
\label{eq:glsmmaps}
\end{equation}
Accordingly, put
\[
\CC_\beta =
\bigoplus_{\rho} H^0(\PP^1, \sheaf O_{\PP^1}(d_\rho^\beta)).  
\]
There is a natural action of $G=\Hom(\Pic(\X), \CC^*)$ on
$\CC_\beta$, where $G$ acts on 
$f_\rho$ as multiplication by $g([D_\rho])$.
Thinking of a point in $\CC_\beta$
as a map $\CC^2 \to \CC^{\Sigma(1)}$, we
define a set $Z_\beta \subset \CC_\beta$ to be those tuples of sections
defining a map whose image is contained in $Z(\Sigma) \subset
\CC^{\Sigma(1)}$.  One can easily check that 
\begin{equation}
X_\beta = \left({\CC_\beta - Z_\beta}\right)/{G}
\label{eq:xbquotient}
\end{equation}
is a toric variety.\footnote{In the GLSM, the constraint leading to the
avoidance of $Z_\beta$ arises from FI terms.}   

We can alternatively describe $X_\beta$ as the  toric variety 
$X_{\Sigma_\beta}$
associated  to a fan $\Sigma_\beta$.
Let $(t_0,t_1)$ be homogeneous
coordinates on $\PP^1$ and for each $\rho$ with $d_\rho^\beta\ge0$ write
\begin{equation}
f_\rho=\sum_{i=0}^{d_\rho^\beta} f_{\rho_i}t_0^it_1^{d_\rho^\beta-i}.
\label{eq:fexp}
\end{equation}

Let $\CC_\beta^*\subset \CC_\beta$ be the subset on which all $f_{\rho_i}$
are nonzero.  Then $T_\beta=\CC_\beta^*/G\subset\X_\beta$ is a dense torus
acting on $\X_\beta$, giving it the structure of a toric variety.  Defining 
the lattice of 1-parameter subgroups of $T_\beta$ as $N_\beta$, we have a fan 
$\Sigma_\beta$ in $N_\beta\otimes\RR$ whose edges $\Sigma_\beta(1)$ naturally
correspond to $T_\beta$-invariant divisors $D_{\rho_i}$ defined by 
$f_{\rho_i}=0$.

We would like to identify the edges in $\Sigma_\beta(1)$ as 
$\rho_{i}$ with $\rho\in\Sigma(1)$ and $i=0,\ldots,
d_\rho^\beta$.  Then the $f_{\rho_i}$ would 
be naturally identified with the homogeneous coordinates of $X_\beta$.  

There is however a subtlety in that not all of the divisors $D_{\rho_i}$ 
defined by $f_{\rho_i}=0$ correspond to edges in the fan $\Sigma_\beta$, 
since it can happen that $D_{\rho_i}$ can
be empty.  Before explaining how these arise in general, an example will
clarify the phenomenon.

We consider
the Hirzebruch surface $F_n$ described as a 2-dimensional toric variety 
whose edges $\rho_1\ldots,\rho_4$ are respectively spanned by the 
vectors
\[
v_1=(1,0),\ v_2=(-1,n),\ v_3=(0,1),\ v_4=(0,-1).
\] 
Related aspects of this example are also discussed in \cite{Donagi:2011}.

Let $\beta=D_{\rho_3}$, the class of the $-n$ curve.  Then the
$d^\beta_{\rho_i}$ are respectively $1,1,-n,0$ for $i=1,\ldots,4$, and so
\[
\CC_\beta=
H^0(\PP^1, \sheaf O_{\PP^1}(1))^2\oplus 
H^0(\PP^1, \sheaf O_{\PP^1}).
\]
Note that $f_{\rho_3}\in H^0(\PP^1, \sheaf O_{\PP^1}(-n))$ is identically
zero, so that 
$Z_\beta$ contains the hyperplane $f_{{(\rho_4)}_0}=0$ owing to the primitive
collection $K=\{\rho_3,\rho_4\}$ in the fan $\Sigma$ of $F_n$.  It follows 
that $D_{(\rho_4)_0}$ is empty, hence $(\rho_4)_0$ is not in the fan
$\Sigma_\beta$.  In fact, $\Sigma_\beta$ is readily identified with the
standard toric fan for $\PP^3$,
with the four edges $(\rho_i)_j$ for $i=1,2$ and $j=0,1$.  We have
$\Sigma_\beta(1)=\{(\rho_i)_j\mid\ i=1,2, {\rm\ and\ } j=0,1\}$.

We now explain when $D_{\rho_i}$ can be empty.  
If $d^\beta_\rho<0$, then there are
no $f_{\rho_i}$.  If $d^\beta_\rho>0$, then
$f_{\rho_i}$ can be zero 
while $f_\rho$ from (\ref{eq:fexp}) is not identically zero, so $D_{\rho_i}$
is certainly nonempty.  It follows that $D_{\rho_i}$ is empty precisely
when $d^\beta_\rho=0$ and for some primitive
collection $K$ of $\Sigma$ containing $\rho$ we have
$d^{\beta}_{\rho'}<0$ for all $\rho'\ne\rho$ in $K$.  

We will consider these $\rho_0$ as 
\emph{degenerate edges} of $\Sigma_\beta$.

To formalize these considerations and facilitate a uniform treatment, 
we introduce the notation

\begin{equation}
\widehat{\Sigma}_\beta(1)=
\left\{\rho_i\mid\rho\in\Sigma(1),\ 0\le i\le d_\rho^\beta
\right\}.
\label{eq:enhancededges}
\end{equation}

In (\ref{eq:enhancededges}), $\widehat{\Sigma}_\beta(1)$ 
is a set of formal 
symbols. We can and will
identify those $\rho_i$ whose associated divisor $D_{\rho_i}$ is nonempty
with edges $\rho_i$ 
of the fan
$\Sigma_\beta(1)$.  No confusion should result from this slight abuse of 
notation.  Thus, $\widehat{\Sigma}_\beta(1)$ is an enhancement of the set of
edges of the fan $\Sigma_\beta$ by the degenerate edges.

In the case of $F_n$, we have 
$\widehat{\Sigma}_\beta(1)=\Sigma_\beta(1)\cup\{(\rho_4)_0\}$.

With this understanding, we can now alternatively 
specify the fan $\Sigma_\beta$
by specifying the primitive collections within 
the set $\widehat{\Sigma}_\beta(1)$
of enhanced edges.

Recalling  that $Z(\Sigma)$ is the union of the linear subspaces $L_K$,
we infer that $Z_\beta$ is the union over all $K$ of the subspaces of sections 
defining maps whose images are contained in $L_K\subset\CC^{\Sigma(1)}$.  
If $K=\{\rho_1,\ldots,\rho_k\}$, then this subspace is defined by imposing
$f_{{\rho_{j}}_i}=0$ for $1\le j\le k$.

Accordingly, for each primitive collection $K \subset \Sigma(1)$, define
$K_\beta \subset \widehat{\Sigma}_\beta(1)$ as the set of
all edges corresponding to those in $K$:
\[
K_\beta = \left\{ \left\{\rho_i \in \widehat{\Sigma}_\beta(1)\right\} \;|\; 
\rho \in K \right\}.
\]
It is straightforward to see that the $K_\beta$ are the primitive collections
for $\X_\beta$, with a natural extension of the notion of primitive 
collection to 
$\widehat{\Sigma}_\beta(1)$.  This is essentially because we can recognize
$Z_\beta$ as the union of the linear subspaces $L_{K_\beta}$.
Furthermore, the fan $\Sigma_\beta$ consists of the cones spanned
by all collections of edges in $\Sigma_\beta(1)$ that do not contain
any primitive collection $K_\beta$.  These general constructions of
toric geometry identify $Z_\beta$ with $Z(\Sigma_\beta)$.  Note that
if $\rho_0$ is a degenerate edge, then by this definition, the
singleton set $\{\rho_0\}$ is a primitive collection.  But this is
exactly what we want, since $\rho_0$ is not part of the actual fan.

For later use, we introduce the following numerical function.

\begin{definition}
Define the function $h^0:\ZZ\to \ZZ$ by
\[
h^0(x)=
\begin{cases}
   x+1 & x\ge -1\\
   0 & x \le -1,
\end{cases}
\]
or more concisely, $h^0(x) = h^0(\PP^1,\sheaf O_{\PP^1}(x))=\mathrm{max}(0,x+1)$.
\label{def:h0def}
\end{definition}

Note that for each edge $\rho \in
\Sigma(1)$ there are $h^0(d_\rho^\beta)$ edges in $\Sigma_\beta(1)$. 

\begin{proposition}
   For all $\beta \in \moriz \X$, $X_\beta$ is a smooth projective toric variety.
   \label{prop:moduliSmooth}
\end{proposition}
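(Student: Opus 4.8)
The plan is to prove smoothness by exhibiting $X_\beta$ as an iterated projective bundle, or more directly by verifying that the fan $\Sigma_\beta$ is smooth (every cone generated by part of a $\ZZ$-basis of $N_\beta$) and then invoking the quotient description \eqref{eq:xbquotient} together with standard toric criteria to get projectivity. The combinatorial heart is the description of $\Sigma_\beta$ via its primitive collections $K_\beta$, which we already have in hand.

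\emph{Step 1: Identify $N_\beta$ and the edges.} First I would pin down the lattice $N_\beta$ of one-parameter subgroups of $T_\beta = \CC_\beta^*/G$ and the primitive generators of the (genuine, non-degenerate) edges $\rho_i \in \Sigma_\beta(1)$. The cleanest route is to use \eqref{eq:picg}: since $\Pic(X_\beta) \simeq \Hom(G_\beta, \CC^*)$ where $G_\beta = G$ acts on $\CC_\beta$ with weight $[D_\rho]$ on each $f_{\rho_i}$, the grading is inherited from $\Pic(X)$ but now there are $h^0(d_\rho^\beta)$ coordinates carrying the class $[D_\rho]$. Dualizing the analogue of \eqref{eq:pic}, one reads off $N_\beta$ and the images $v_{\rho_i} \in N_\beta$ of the dual basis vectors. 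The key bookkeeping fact is that $\dim X_\beta = \left(\sum_\rho h^0(d_\rho^\beta)\right) - \operatorname{rk} G = n + \sum_\rho d_\rho^\beta$ on the locus where all $d_\rho^\beta \geq 0$, with the obvious correction from degenerate edges — this matches the expected dimension of the space of maps.

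\emph{Step 2: Smoothness of each maximal cone.} A maximal cone $\sigma$ of $\Sigma_\beta$ is spanned by a collection of edges in $\Sigma_\beta(1)$ containing no primitive collection $K_\beta$. Using the explicit description $K_\beta = \{\rho_i : \rho \in K\}$ (all enhanced edges over each $\rho \in K$), a maximal cone corresponds to choosing, for each primitive collection $K$ of $\Sigma$, at least one edge $\rho$ with $K$ \emph{not} entirely below $\sigma$ — concretely it amounts to a maximal cone $\tilde\sigma$ of $\Sigma$ together with, for each $\rho \in \Sigma(1)\setminus\tilde\sigma(1)$, a choice of which of the $h^0(d_\rho^\beta)$ enhanced edges $\rho_i$ to \emph{omit}, in a pattern forced by avoiding the degenerate/primitive sets. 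I would then check that the corresponding generators $\{v_{\rho_i}\}$ form part of a $\ZZ$-basis of $N_\beta$; this reduces, by the block structure of $N_\beta$ over $N$, to the smoothness of $\tilde\sigma$ in $\Sigma$ (which holds since $X$ is smooth) plus the trivial observation that the "fiber direction" generators associated to a single $\rho$ are standard basis vectors. The degenerate edges $\rho_0$ need to be handled as singleton primitive collections $\{\rho_0\}$, so they simply never appear in any cone and cause no trouble.

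\emph{Step 3: Projectivity.} Finally, projectivity of $X_\beta$ follows because $\beta$ lies in the Mori cone, so $d_\rho^\beta = D_\rho \cdot \beta$ are controlled and the GIT quotient $(\CC_\beta - Z_\beta)/G$ is a geometric quotient with the stability chamber inherited from that of $X$; one can also argue that $X_\beta$ fibers (via forgetting all but the lowest-degree data, or via a Veronese-type construction) over products of projective spaces and toric varieties built from $X$, hence is projective. The main obstacle I anticipate is \emph{Step 2} — specifically, giving a clean combinatorial bijection between maximal cones of $\Sigma_\beta$ and the relevant choices, and verifying the $\ZZ$-basis property uniformly in the presence of degenerate edges; the subtlety that some $D_{\rho_i}$ are empty has to be tracked carefully so that the count of edges in each maximal cone is exactly $\dim X_\beta$. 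Everything else is routine toric geometry, but this step is where the enhanced-edge formalism earns its keep.
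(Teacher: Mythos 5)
Your route is genuinely different from the paper's, and with the deferred combinatorics filled in it does work, but it is considerably more laborious. The paper never describes the fan $\Sigma_\beta$ at all: it uses the criterion that a simplicial toric quotient $(\CC^{\Sigma(1)}-Z(\Sigma))/G$ is smooth if and only if $G$ acts freely, notes that smoothness of $X$ gives freeness upstairs, and transfers freeness to $\CC_\beta-Z_\beta$ by a one-line evaluation argument: if $f\notin Z_\beta$, then for generic $(t_0,t_1)$ the point $f(t_0,t_1)$ lies in $\CC^{\Sigma(1)}-Z(\Sigma)$, and a $G$-fixed $f$ would force $f(t_0,t_1)$ to be $G$-fixed, a contradiction. (Projectivity is simply cited.) Your Step 2, by contrast, demands an explicit identification of the maximal cones and a $\ZZ$-basis check. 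For the record, the reduction you gesture at does go through: dualizing the analogue of (\ref{eq:pic}) presents $N_\beta$ as $\ZZ^{\widehat{\Sigma}_\beta(1)}/\Pic(\X)^\vee$; the complement of a maximal cone of $\Sigma_\beta$ consists of one chosen $\rho_{i(\rho)}$ for each $\rho$ in the complement of a maximal cone $\tilde\sigma$ of $\Sigma$ (degenerate edges being forced omissions); and since the $\rho_i$-component of $\Pic(\X)^\vee\hookrightarrow\ZZ^{\widehat{\Sigma}_\beta(1)}$ is evaluation against $[D_\rho]$ independently of $i$, unimodularity of the chosen generators is exactly unimodularity of $\tilde\sigma$, i.e.\ smoothness of $X$. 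What your approach buys is explicit fan data for $X_\beta$, which the paper never needs; what the paper's buys is a two-line proof immune to the degenerate-edge bookkeeping you correctly flag as the delicate point. As written, your Step 2 is a plan rather than a proof: the asserted bijection between maximal cones of $\Sigma_\beta$ and pairs (maximal cone of $\Sigma$, choice of omitted enhanced edges) still needs a minimal-transversal argument on the primitive collections $K_\beta$, not just the dimension count.
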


\begin{proof}
   See \cite[Example 7.2.3]{ciocan:2010tq} for a proof that $X_\beta$ is 
projective, and we have already explained that it is toric.  For smoothness,
it is elementary to show that a simplicial 
toric variety $X$ is smooth if and only if, upon writing 
$X=(\CC^{\Sigma(1)}-
Z(\Sigma))/G$, the action of $G$ on
$\CC^{\Sigma(1)}-Z(\Sigma)$ is free.  Thus, the smoothness of $X$ 
implies that $G$ acts freely on $\CC^{\Sigma(1)}-Z(\Sigma)$.

This easily implies that
the action of $G$ on $\CC^{\Sigma_\beta(1)}-Z(\Sigma_\beta)$
is also free as follows.  Let
$f\in \CC^{\Sigma_\beta(1)}-Z(\Sigma_\beta)$ with $f$ expressed in terms of the
$f_{\rho_i}$ as in (\ref{eq:fexp}).  Then for any primitive collection
$K$ and edge $\rho\in K$, we can't have
$f_{\rho_i}=0$ for all $i$ by the definition of $K_\beta$. 
It follows that the $f_{\rho_i}(t_0,t_1)$ can't
all vanish for generic $(t_0,t_1)\in\CC^2$, hence
there exists $(t_0,t_1)\in\CC^2$ with
$f(t_0,t_1)\not\in Z$, where $f(t_0,t_1)\in \CC^{\Sigma(1)}$ has 
coordinates $f_{\rho}(t_0,t_1)$.  
If in addition such an $f$ is a fixed point, 
then $f(t_0,t_1)\in \CC^{\Sigma(1)}-Z(\Sigma)$ would
also be a fixed point, contradicting the smoothness of $X$.  
Therefore $X_\beta$
is smooth as well.
\end{proof}

\begin{remark}
   $\M 0 = \X$; the moduli space of constant maps is $\X$ itself.
\end{remark}

\subsection{Induced sheaf}

Since $\M \beta$ is a smooth projective toric variety, we have its Euler exact
sequence
\begin{equation}
0 \to \mathcal O_{\M \beta}\otimes_{\ZZ} \Pic(X_\beta)^\vee 
\overset {F_0} \longrightarrow
\bigoplus_{\rho_i\in \Sigma_\beta(1)} \mathcal O_{\M \beta}(D_{ {\rho_i}}) \to
T_{\M \beta} \to 0,
\label{eq:xbtes}
\end{equation}
where the $\rho_i^\text{th}$ component of the morphism $F_0$ is $f_{ \rho_i} 
\otimes [D_{ \rho_i}]$.  

Recalling that $D_{\rho_0}$ is empty for a degenerate edge $\rho_0$,
by adding trivial line bundles to each of the first two nonzero bundles in 
(\ref{eq:xbtes}) for each degenerate edge,
we obtain a modification of the Euler sequence of $X_\beta$
\begin{equation}
0 \to \mathcal O_{\M \beta}\otimes W^\vee \overset {\hat{F}_0} \longrightarrow
\bigoplus_{\rho_i\in \widehat{\Sigma}_\beta(1)} \mathcal O_{\M \beta}(D_{ {\rho_i}}) \to
T_{\M \beta} \to 0,
\label{eq:modtes}
\end{equation}
where the $\rho_i^\text{th}$ component of the morphism $\hat{F}_0$ is 
$f_{ \rho_i} 
\otimes [D_{ \rho}]$. 

We add a few words of clarification on the relationship between 
(\ref{eq:xbtes}) and (\ref{eq:modtes}), even though it is not essential for
the sequel.

Let 
\[
\widehat{\CC}_\beta =
\widehat{\bigoplus}_{\rho} H^0(\PP^1, \sheaf O_{\PP^1}(d_\rho^\beta)),
\]
where the hat over the direct sum means that we are omitting the components
of $\CC_\beta$ associated with degenerate edges.  Let $G'\subset G$ be the
subgroup which acts as the identity on 
the linear subspace of $\CC_\beta$ defined
by the vanishing of the $f_{\rho_i}$ with $\rho_i\in \Sigma_\beta(1)$ 
(rather than $\widehat{\Sigma}_\beta(1)$).  Then we have
\begin{equation}
X_\beta=\left(\widehat{\CC}_\beta-\left(Z_\beta\cap \widehat{\CC}_\beta\right)
\right)/G',
\label{eq:usualquotient}
\end{equation}
where $\widehat{\CC}_\beta$ is viewed as a subspace of $\CC_\beta$ in the 
natural way.  In fact, (\ref{eq:usualquotient}) is just the usual description
of $X_\beta$ as a quotient constructed from the fan $\Sigma_\beta(1)$.

By (\ref{eq:picg}), $\Pic(\X)$ is canonically isomorphic to $\Hom(G,\CC^*)$
and $\Pic(\X_\beta)$ is canonically isomorphic to $\Hom(G',\CC^*)$.  
The inclusion $G'\hookrightarrow G$ therefore induces a mapping
$\Pic(\X)\to\Pic(\X_\beta)$ which is needed in justifying the claimed 
relationship between (\ref{eq:xbtes}) and (\ref{eq:modtes}).
It is the modified version (\ref{eq:modtes}) 
of the toric euler sequence
that gets deformed by a deformation $\sheaf E$ of $TX$.  
With slight abuse
of notation, we will refer to deformations of the map $\hat{F}_0$ as giving
rise to toric deformations of the tangent bundle of $X_\beta$.  No confusion
should result.

We now define the induced sheaf $\sheaf E_\beta$ precisely
as dictated by the GLSM.
On $\X$, we defined a bundle $\sheaf E$ in terms of 
sections $E_\rho$ of $\sheaf O_\X(D_\rho)\otimes W$, for each 
$\rho\in\Sigma(1)$. 
We shall now associate to these sections corresponding sections of 
$\sheaf O_{\M \beta}(D_{\rho_i})\otimes W$, 
leading to a toric deformation
$\sheaf E_\beta$ of the tangent bundle of $\X_\beta$ in the modified
sense just explained.

We express each $E_\rho$ in terms of homogeneous coordinates; for emphasis
we write this as $E_\rho=E_\rho(x)$.  Over 
$\X_\beta$ we then make the substitutions
$x_\rho=f_\rho(t_0,t_1)$ for each $\rho\in \Sigma(1)$ to obtain expressions
that we abbreviate as $E_\rho(f(t))$.  We now collect powers of $t_0$ and 
$t_1$, writing the result as
\begin{equation}
E_\rho(f(t))=\sum_{i=0}^{d_\rho^\beta} E_{\rho_i}(f)t_0^it_1^{d_\rho^\beta-i}.
\label{eq:inducedBundle}
\end{equation}
The $W$-valued expressions $E_{\rho_i}(f)$ are then 
interpreted as the components of a toric deformation $\sheaf E_\beta$
of the tangent bundle of $\X_\beta$ in the modified sense, defined by 
the exact sequence

\begin{equation}
0 \to \mathcal O_{\M \beta}\otimes W^\vee \overset {\hat{F}} \longrightarrow
\bigoplus_{\rho_i\in \widehat{\Sigma}_\beta(1)} \mathcal O_{\M \beta}(D_{ {\rho_i}}) \to
\sheaf E_\beta \to 0,
\label{eq:defmodtes}
\end{equation}
where the components of $\hat{F}$ are as described above.

\begin{proposition}
   $\sheaf E_\beta$ is locally-free if $\sheaf E$ is.
\end{proposition}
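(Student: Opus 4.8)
The plan is to show that the injective map $\hat F$ appearing in the presentation (\ref{eq:defmodtes}) is a subbundle inclusion — equivalently, in the sense of (\ref{eq:elocfree}), that its transpose is fiberwise surjective — so that its cokernel $\sheaf E_\beta$ is automatically locally free. Concretely, I will check that for every point $p \in \M\beta$ the induced map on fibers
\[
\hat F_p : W^\vee \lto \bigoplus_{\rho_i \in \widehat\Sigma_\beta(1)} \sheaf O_{\M\beta}(D_{\rho_i})\big|_p
\]
is injective. Before that I would isolate the elementary homogeneity fact underlying the expansion (\ref{eq:inducedBundle}): since $E_\rho(x)$ is homogeneous of degree $[D_\rho] \in \Pic(\X)$ and the substitution $x_{\rho'} = f_{\rho'}(t_0,t_1)$ replaces $x_{\rho'}$ by a binary form of degree $d_{\rho'}^\beta = D_{\rho'}\cdot\beta$ (where monomials involving an $f_{\rho'}$ with $d_{\rho'}^\beta<0$ vanish because $H^0(\PP^1,\sheaf O_{\PP^1}(d_{\rho'}^\beta))=0$), the specialization $E_\rho(f(t))$ is a $W$-valued binary form of degree exactly $d_\rho^\beta$; in particular it vanishes identically when $d_\rho^\beta < 0$, and for $d_\rho^\beta \ge 0$ its coefficients are precisely the vectors $E_{\rho_i}(f) \in W$.

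Now fix $p$, represented by a tuple $f = (f_\rho) \in \CC_\beta - Z_\beta$ as in (\ref{eq:xbquotient}), and suppose $w^\vee \in W^\vee$ lies in the kernel of $\hat F_p$, i.e.\ $\langle w^\vee, E_{\rho_i}(f)\rangle = 0$ for all $\rho_i \in \widehat\Sigma_\beta(1)$. By the homogeneity remark together with the definition of $\widehat\Sigma_\beta(1)$ — which runs over all $0 \le i \le d_\rho^\beta$ for every $\rho \in \Sigma(1)$ — this is equivalent to $\langle w^\vee, E_\rho(f(t_0,t_1))\rangle = 0$ for every $\rho \in \Sigma(1)$ and every $(t_0,t_1) \in \CC^2$ (the cases $d_\rho^\beta < 0$ being automatic). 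Since $f \notin Z_\beta$, the image of the associated map $\CC^2 \to \CC^{\Sigma(1)}$ is not contained in $Z(\Sigma)$, so we may choose $(t_0,t_1)$ with $q := f(t_0,t_1) \notin Z(\Sigma)$; this represents a genuine point $[q] \in \X = (\CC^{\Sigma(1)} - Z(\Sigma))/G$. Reading off the fiber map of $E$ from (\ref{eq:Epresentation}) in Cox homogeneous coordinates, its $\rho$-component carries $w^\vee$ to the value of the polynomial $\langle w^\vee, E_\rho(x)\rangle$ at $x = q$, which we have just shown vanishes for all $\rho$; hence $w^\vee$ lies in the kernel of $E_{[q]}$. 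But $\sheaf E$ is locally free, so (\ref{eq:Epresentation}) is a short exact sequence of vector bundles and $E_{[q]}$ is injective; therefore $w^\vee = 0$. This proves $\hat F$ is fiberwise injective at every point, hence $\sheaf E_\beta$ is locally free.

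The step I expect to require the most care is the bookkeeping around the degenerate edges. For $\rho$ with $d_\rho^\beta = 0$ lying in a primitive collection $K$ with $d_{\rho'}^\beta < 0$ for the remaining $\rho' \in K$, the divisor $D_{\rho_0}$ is empty, so $\sheaf O_{\M\beta}(D_{\rho_0}) = \sheaf O_{\M\beta}$ and $f_\rho$ is a forced nonzero constant; this is exactly why one must deform the \emph{modified} Euler sequence (\ref{eq:modtes})/(\ref{eq:defmodtes}) and keep the degenerate edges inside $\widehat\Sigma_\beta(1)$. Only then is the index set large enough that vanishing of $\langle w^\vee, E_{\rho_i}(f)\rangle$ over $\widehat\Sigma_\beta(1)$ genuinely recovers vanishing of $\langle w^\vee, E_\rho(f(t))\rangle$ as a binary form for \emph{every} $\rho \in \Sigma(1)$, including those with $d_\rho^\beta = 0$. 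Once this identification is cleanly stated, the remainder is the routine fiberwise argument above, needing no input beyond local-freeness of $\sheaf E$ and the non-containment $\mathrm{im}\,f \not\subset Z(\Sigma)$.
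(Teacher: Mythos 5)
Your argument is correct and is essentially the paper's own proof in dual form: the paper shows directly that the $E_{\rho_i}(f)$ span $W$ by picking $t$ with $f(t)\notin Z(\Sigma)$ and noting $E_\rho(f(t))$ lies in the span of the $E_{\rho_i}(f)$, while you show the equivalent statement that no nonzero $w^\vee\in W^\vee$ annihilates all the $E_{\rho_i}(f)$. The extra bookkeeping you supply about degenerate edges and the binary-form expansion is consistent with, and slightly more explicit than, what the paper records.
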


\begin{proof}
The local freeness of $\sheaf E$ is equivalent to the assertion that the
$E_\rho(x)$ span $W$ for all $x\in \CC^{\Sigma(1)}-Z(\Sigma)$---this is just
the surjectivity of (\ref{eq:elocfree}).  

Now let $f\in C_\beta-Z_\beta$. 
Taking a generic $t=(t_0,t_1)\in \CC^2$
as in the proof of Proposition~\ref{prop:moduliSmooth}, it 
follows that $f(t_0,t_1)\in \CC^{\Sigma(1)}-Z(\Sigma)$, hence the 
$E_\rho(f(t))$ span $W$.  But (\ref{eq:inducedBundle}) says that $E_\rho(f(t))$
is in the span of the $E_{\rho_i}(f)$. It follows immediately that 
the $E_{\rho_i}(f)$ span $W$ as well, hence $E_\beta$ is locally
free.
\end{proof}

We now turn to the computation of the polymology of $(\X_\beta,
\sheaf E_\beta)$.  Since $\sheaf E_\beta$ is a toric deformation of the
tangent bundle of $\X_\beta$ in the modified sense, its polymology algebra 
can be computed by the same method as in Section~\ref{sec:polymology}, 
resulting in 
a description of its polymology as a quotient of the symmetric algebra of $W$.
The only change that is needed is to consider degenerate edges.  Recall 
that for a degenerate edge
$\rho_0$, $D_{\rho_0}$ is empty.   So we have to supplement
Proposition~\ref{prop:vanishing} with
\[
H^\ell(\X_\beta,\sheaf O(-D_{\rho_0}))=\left\{
\begin{array}{cl}
\CC&\ell=0\\
0 & \ell >0
\end{array}
\right. 
\]
for degenerate edges $\rho_0$.
The reasoning in Section~\ref{sec:polymology} produces an element
$Q_{[\rho_0]}\in W$ associated to the primitive collection $\{\rho_0\}$
which is in the kernel of the map $W\to H^1(\X_\beta,\sheaf E_\beta^\vee)$
arising from the long exact sequence associated to the dual
of (\ref{eq:defmodtes}).  We let 
$\widehat{SR}(X_\beta,\sheaf E_\beta)$ be the ideal in $\Sym^*W$ generated by
$SR(X_\beta,\sheaf E_\beta)$ and the 
$Q_{[\rho_0]}$ just described.

To compute $\widehat{SR}(X_\beta,
\sheaf E_\beta)$, we just need to compute the $Q_{K_\beta}$.

Fix a $\rho\in\Sigma(1)$ and write
\[
E_\rho=\sum_{\rho}'A_{\rho\rho'}x_{\rho'}+\cdots,
\]
where the sum is over edges $\rho'$ with $D_{\rho'}$ linearly equivalent
to $D_\rho$ and the $\cdots$ represent the omitted nonlinear terms.  Said 
differently, $A$ is the linear coefficient matrix $A_c$ associated to
the linear equivalence class $c$ containing $\rho$.  Then
\[
\begin{array}{ccl}
E_\rho(f)& = &\sum_{\rho'}A_{\rho\rho'}f_{\rho'}+\cdots\\
&=&\sum_{\rho',i}A_{\rho\rho'}f_{\rho'_i}t_0^it_1^{d_\beta-i}+\cdots,
\end{array}
\]
so that
\begin{equation}
E_{\rho_i}=\sum_{\rho'}A_{\rho\rho'}f_{\rho'_i}+\cdots.
\label{eq:block}
\end{equation}
Denoting the analogue of $A_c$ for $\sheaf E_\beta$ by $(A_\beta)_c$,
equation (\ref{eq:block}) says that $(A_\beta)_c$ has a block diagonal form
consisting of $h^0(d_\beta)$ copies of $A_c$.  Thus
\[
\det\left(A_\beta\right)_c=Q_c^{h^0(d_\beta)}.
\]
It follows immediately that
\begin{equation}
Q_{K_\beta}=\prod_{c\in [K]} Q_c^{h^0(D_c\cdot\beta)}
\label{eq:qbk}
\end{equation}
and 
\[
H^*_{\sheaf E_\beta}=\Sym^*W/\widehat{SR}(X_\beta,\sheaf E_\beta),
\]
where
\[
\widehat{SR}(X_\beta,\sheaf E_\beta)=      \left ( Q_{K_\beta}
      \;|\; K \text{ a
      primitive collection of } \Sigma\right) \subset \Sym^*W.
\]

Note that if $K$ is a primitive collection of $\Sigma$ containing an edge
$\rho$ of $\Sigma$ with $\rho_0$ a degenerate edge of $\Sigma_\beta$, then 
from (\ref{eq:qbk}) we get $Q_{K_\beta}=Q_{[\rho]}$, where $[\rho]=\{\rho\}$
is the linear equivalence class of $\rho$.  But this is precisely the linear
part of $\widehat{SR}(X_\beta,\sheaf E_\beta)$ by our earlier discussion.

As with the classical case, we will define the correlation functions in the
sector labelled by $\beta$ as 
elements of the one-dimensional vector space
$H^{n_\beta}(\X_\beta,\exterior {n_\beta}\sheaf E_\beta)$, where $n_\beta$ is the dimension of
$\X_\beta$.  The precise definition will be spelled out later, but first
we have to grapple with the normalization issue.

The correlation functions will be obtained as usual by adding the contributions
over the sectors $\beta$.  However, since these contributions live in {\em different\/}
one-dimensional vector spaces, we will describe a distingushed space $H^*$,
together with a collection of isomorphisms 
\[
H^{n_\beta}_{\sheaf E_\beta} \simeq H^*
\]
for each $\beta$, in which the contributions will be summed.  This will be accomplished by assembling the $H^{n_\beta}_{\sheaf E_\beta}$
into a direct system in the next section.

\subsection{Direct system of polymologies}
\label{sec:direct}

For every $\beta \in H_2(\X, \ZZ)$, we have constructed an induced
deformation $\sheaf E_\beta$ of the toric Euler sequence on $\X_\beta$.  
The algebra $\polybeta$ is generated by elements of 
$H^1(\X_\beta,\sheaf E_\beta^\vee) \simeq W$.  

We now construct a direct system from these
polymologies and show that the one-dimensional spaces
$H^{n_\beta}(\X_\beta,\exterior {n_\beta}\sheaf E_\beta^\vee)$ are preserved by
the maps of the direct system, hence by restriction also give a direct system.
We will show that the direct limit is a one-dimensional vector space, in which
the correlation functions take their values.

For each $c$ corresponding to a linear equivalence class of divisors $D_\rho$,
we put $d_c^\beta=d_\rho^\beta$, for any $\rho$ in the equivalence class.

\begin{definition}
For classes $\beta,\beta'\in H_2(X,\ZZ)$, we say that $\beta'$ 
{\em dominates\/} $\beta$
if $\beta'-\beta$ is effective and
$h^0(d_c^{\beta'})\ge h^0(d_c^{\beta})$ for all
linear equivalence classes $c$ of the irreducible toric divisors $D_\rho$.
\end{definition}

If $\beta'$ dominates $\beta$, 
we define the expression
\begin{equation}
   \label{def:r}
   R_{\beta'\beta}=\prod_c Q_c^{h^0(d_c^{\beta'})-h^0(d_c^\beta)}\in 
\mathrm{Sym}^* W.
\end{equation}

\begin{lemma}
   Suppose that $\beta'$ dominates $\beta$.  Then
   \[
   R_{\beta'\beta}\left(
SR(\X_\beta,\sheaf E_\beta)\right)\subset SR(\X_{\beta'},\sheaf E_{\beta'}).
   \]
\label{lem:domsr}
\end{lemma}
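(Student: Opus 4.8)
The plan is to reduce the asserted ideal inclusion to a generator-by-generator monomial identity. Since $SR(\X_\beta,\sheaf E_\beta)$ is generated by the elements $Q_{K_\beta}$ as $K$ ranges over the primitive collections of $\Sigma$, with $Q_{K_\beta}$ given by \eqref{eq:qbk}, and since $SR(\X_{\beta'},\sheaf E_{\beta'})$ is an ideal, it suffices to show that $R_{\beta'\beta}\cdot Q_{K_\beta}\in SR(\X_{\beta'},\sheaf E_{\beta'})$ for each such $K$; the general element of $R_{\beta'\beta}\cdot SR(\X_\beta,\sheaf E_\beta)$ is then an $\Sym^*W$-combination of these.

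First I would invoke Lemma~\ref{lem:primlin}: a primitive collection $K$ is a union of complete linear-equivalence classes of edges, so the set $[K]$ of classes meeting $K$ is well defined, and every linear-equivalence class $c$ of irreducible toric divisors is either contained in $K$ or disjoint from $K$. Hence the product over all $c$ defining $R_{\beta'\beta}$ in \eqref{def:r} factors cleanly as a product over $c\in[K]$ times a product over $c\notin[K]$.

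Then the computation is immediate. Combining \eqref{eq:qbk} and \eqref{def:r} and using the factorization just noted,
\[
R_{\beta'\beta}\cdot Q_{K_\beta}
= \prod_{c\in[K]} Q_c^{\,h^0(d_c^{\beta'})-h^0(d_c^\beta)+h^0(d_c^\beta)}
\cdot \prod_{c\notin[K]} Q_c^{\,h^0(d_c^{\beta'})-h^0(d_c^\beta)}
= Q_{K_{\beta'}}\cdot \prod_{c\notin[K]} Q_c^{\,h^0(d_c^{\beta'})-h^0(d_c^\beta)}.
\]
Because $\beta'$ dominates $\beta$, every exponent $h^0(d_c^{\beta'})-h^0(d_c^\beta)$ is nonnegative, so the trailing product is a genuine element of $\Sym^*W$ (each $Q_c=\det A_c$ lies in $\Sym^{|c|}W$), and $Q_{K_{\beta'}}$ is one of the defining generators of $SR(\X_{\beta'},\sheaf E_{\beta'})$. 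Therefore $R_{\beta'\beta}\cdot Q_{K_\beta}$ lies in that ideal, and the lemma follows.

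The only genuinely substantive point — and the one step I would spell out carefully rather than wave at — is the appeal to Lemma~\ref{lem:primlin} to guarantee that $[K]$ and its complement partition the set of all linear-equivalence classes, since the factorization of $R_{\beta'\beta}$ above (and hence the whole argument) depends on it; one should also note explicitly where the domination hypothesis is used, namely to keep all exponents in the trailing product nonnegative so that it stays inside $\Sym^*W$. Beyond that the proof is pure bookkeeping with the $h^0$ exponents, so I do not anticipate a real obstacle.
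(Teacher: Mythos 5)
Your proof is correct and follows essentially the same route as the paper's: reduce to the generators $Q_{K_\beta}$, compare the exponent of each $Q_c$ in $R_{\beta'\beta}Q_{K_\beta}$ against that in $Q_{K_{\beta'}}$ (splitting into $c\in[K]$ versus $c\notin[K]$), and use dominance to keep the leftover exponents nonnegative. Your explicit appeal to Lemma~\ref{lem:primlin} to justify that each linear-equivalence class is either contained in or disjoint from $K$ is a nice touch of added care, but it is the same argument.
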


\begin{proof}
We will show that for each primitive collection $K$,
$R_{\beta'\beta}Q_{K_\beta}$
is a multiple of $Q_{K_{\beta'}}$.  This will suffice to prove the lemma,
by the definitions of $SR(\X_\beta,\sheaf E_\beta)$ and 
$SR(\X_{\beta'},\sheaf E_{\beta'})$.

For this, it suffices to compare the powers of $Q_c$ occurring 
in $R_{\beta'\beta}Q_{K_\beta}$ and $Q_{K_{\beta'}}$, for each $c$.
If $c\in [K]$, then
the exponent of $Q_c$ in $R_{\beta'\beta}Q_{K_\beta}$ is
\[
h^0(d_c^{\beta'})-h^0(d_c^\beta)+h^0(d_c^\beta) = h^0(d_c^{\beta'}),
\]
which is the exponent of $Q_c$ in $Q_{K_{\beta'}}$.

If $c\not\in[K]$, then the exponent of $Q_c$ in $R_{\beta'\beta}Q_{K_\beta}$ is
$h^0(d_c^{\beta'})-h^0(d_c^\beta)$,
the exponent of $Q_c$ in $Q_{K_{\beta'}}$ is $0$, and the required inequality
holds by the dominance assumption.
\end{proof}

Whenever $\beta'$ dominates $\beta$, 
multiplication by $R_{\beta'\beta}$ induces a well-defined map
\[
f_{\beta'\beta}:H^*_{\sheaf E_\beta}(X_\beta)\to
H^*_{\sheaf E_{\beta'}}(X_{\beta'})
\]
by Lemma~\ref{lem:domsr}.  It is straightforward to verify that the
$\{f_{\beta'\beta}\}$ form a direct system.  We have to show that the
maps are compatible and that any $\beta_1$ and $\beta_2$ are dominated by
some $\beta$.  Compatibility is
obvious, and given any $\beta_1$ and $\beta_2$,
choose a $\beta_3$ effective that has positive 
intersection with each $D_c$\footnote{An intersection of ample divisors
suffices.} and
set $\beta=\beta_1+\beta_2+n\beta_3$
for some $n\gg 0$.
Then $\beta$ dominates both $\beta_1$
and $\beta_2$.

For simplicity of notation, let 
$H^{n_\beta}_{\sheaf E_\beta}$ be the degree $n_\beta$ part of 
$\Sym^*W/(SR(X_\beta),\sheaf E_\beta)$ (which is canonically isomorphic to
$H^{n_\beta}(X_\beta,\exterior {n_\beta}\sheaf E_\beta^\vee)$).

\begin{lemma}
   \ 
   \vspace*{-1em}
   \label{lem:direct}
\begin{enumerate}[i)]
\item If $\beta'$ dominates $\beta$, then $f_{\beta'\beta}(H^{n_\beta}_{\sheaf E_\beta})
\subset H^{n_{\beta'}}_{\sheaf E_{\beta'}}$.  Thus the maps
\[g_{\beta'\beta}:=f_{\beta'\beta}|_{H^{n_\beta}_{\sheaf E_\beta}}:
H^{n_\beta}\to 
H^{n_{\beta'}}_{\sheaf E_{\beta'}}\]
also form a direct system.
\item If $\X_\beta$ and $\X_{\beta'}$ are nonempty, then 
$g_{\beta'\beta}$ is an isomorphism 
for deformations $\sheaf E$ sufficiently close to $TX$ in the moduli space
of toric deformations of $TX$.
\end{enumerate}
\end{lemma}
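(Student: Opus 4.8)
The plan is to dispose of part i) by a degree count, and to reduce part ii) to a single non-vanishing statement which I verify directly when $\sheaf E=T_X$ and then propagate by an openness argument.

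\textbf{Part i).} Recall from (\ref{def:r}) that $f_{\beta'\beta}$ is multiplication by $R_{\beta'\beta}=\prod_c Q_c^{\,h^0(d_c^{\beta'})-h^0(d_c^\beta)}$, and that $Q_c=\det A_c$ lies in $\Sym^{|c|}W$ because $A_c$ is a $|c|\times|c|$ matrix over $W$. Hence
\[
\deg R_{\beta'\beta}=\sum_c |c|\bigl(h^0(d_c^{\beta'})-h^0(d_c^\beta)\bigr)=\sum_\rho\bigl(h^0(d_\rho^{\beta'})-h^0(d_\rho^\beta)\bigr).
\]
On the other hand, taking ranks in the modified toric Euler sequence (\ref{eq:modtes}) gives $n_\beta=\sum_\rho h^0(d_\rho^\beta)-\dim W$, so $\deg R_{\beta'\beta}=n_{\beta'}-n_\beta$. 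Therefore multiplication by $R_{\beta'\beta}$ carries the degree-$n_\beta$ piece of $\Sym^*W/SR(\X_\beta,\sheaf E_\beta)$ into the degree-$n_{\beta'}$ piece of $\Sym^*W/SR(\X_{\beta'},\sheaf E_{\beta'})$, which is the asserted inclusion; the $g_{\beta'\beta}$ then form a direct system because the $f_{\beta'\beta}$ do and restriction preserves the compatibilities.

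\textbf{Part ii), reduction.} First, $H^{n_\beta}_{\sheaf E_\beta}$ and $H^{n_{\beta'}}_{\sheaf E_{\beta'}}$ are one-dimensional, of dimension independent of $\sheaf E$: by Proposition~\ref{prop:gradedIsomorphism} only diagonal Hodge pieces survive, so $\dim H^k(\X,\exterior k\sheaf E^\vee)=(-1)^k\chi(\X,\exterior k\sheaf E^\vee)$, and the Euler characteristic is constant in the flat family of toric deformations; comparing with $\sheaf E=T_X$ (where these are $H^{2n_\beta}$, $H^{2n_{\beta'}}$) gives the claim. Hence $g_{\beta'\beta}$ is an isomorphism iff it is nonzero. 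Since $\sheaf E_{\beta'}$ is omalous in the modified sense ($\det\sheaf E_{\beta'}^\vee\simeq\omega_{\X_{\beta'}}$ because the degenerate line bundles are trivial), its polymology is a bigraded Frobenius algebra whose socle is the top degree $H^{n_{\beta'}}_{\sheaf E_{\beta'}}$, so the pairing $H^{n_\beta}_{\sheaf E_{\beta'}}\times H^{n_{\beta'}-n_\beta}_{\sheaf E_{\beta'}}\to H^{n_{\beta'}}_{\sheaf E_{\beta'}}$ is perfect. Combining this with Lemma~\ref{lem:domsr} (which guarantees $R_{\beta'\beta}\cdot P\in SR(\X_{\beta'},\sheaf E_{\beta'})$ for every $P\in SR(\X_\beta,\sheaf E_\beta)$), one checks that $g_{\beta'\beta}$ is nonzero precisely when the class of $R_{\beta'\beta}$ in $\Sym^*W/SR(\X_{\beta'},\sheaf E_{\beta'})$ is nonzero, i.e. $R_{\beta'\beta}\notin SR(\X_{\beta'},\sheaf E_{\beta'})$.

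\textbf{Part ii), the case $\sheaf E=T_X$.} Here $A_c$ is diagonal with entries $[D_\rho]$, so $Q_c=\prod_{\rho\in c}[D_\rho]$ and $\Sym^*W/SR(\X_{\beta'},T_{\X_{\beta'}})\simeq H^*(\X_{\beta'},\CC)$. Writing $n_{[\rho]}=h^0(d_\rho^{\beta'})-h^0(d_\rho^\beta)$ we get $R_{\beta'\beta}=\prod_\rho [D_\rho]^{\,n_{[\rho]}}$. Since all edges $\rho_i\in\widehat\Sigma_{\beta'}(1)$ over a fixed $\rho$ have the same class $[D_\rho]$ in $\Pic(\X_{\beta'})$, for any choice of subsets $S_\rho\subset\{0,\dots,d_\rho^{\beta'}\}$ with $|S_\rho|=n_{[\rho]}$ we may rewrite $R_{\beta'\beta}=\prod_{\rho_i\in T}[D_{\rho_i}]$ with $T=\{\rho_i\mid i\in S_\rho\}$, a set of distinct edges; such a squarefree monomial is nonzero in $H^*(\X_{\beta'},\CC)$ exactly when $T$ contains no primitive collection of $\Sigma_{\beta'}$. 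The primitive collections of $\Sigma_{\beta'}$ are the $K_{\beta'}$ for $K$ primitive in $\Sigma$, together with the degenerate singletons. Using that $\X_\beta\ne\emptyset$ forbids $d_\rho^\beta<0$ for all $\rho$ in a primitive collection (otherwise $Z_\beta=\CC_\beta$), one checks that the $S_\rho$ can be chosen to avoid both kinds: take $S_\rho$ a proper subset of $\{0,\dots,d_\rho^{\beta'}\}$ whenever $d_\rho^\beta\ge0$ (possible since then $n_{[\rho]}=d_\rho^{\beta'}-d_\rho^\beta<d_\rho^{\beta'}+1$), and $S_\rho=\{0,\dots,d_\rho^{\beta'}\}$ (resp.\ $\emptyset$) when $d_\rho^\beta<0$ and $d_\rho^{\beta'}\ge0$ (resp.\ $d_\rho^{\beta'}<0$). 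With this choice $T$ never contains a full $K_{\beta'}$ (that would force $d_\rho^\beta<0$ throughout $K$, so $\X_\beta=\emptyset$) and never contains a degenerate singleton $\{\rho_0\}$ (which would force $d_\rho^\beta=0$, hence $S_\rho=\emptyset$). Thus $R_{\beta'\beta}\notin SR(\X_{\beta'},T_{\X_{\beta'}})$, so $g_{\beta'\beta}$ is an isomorphism for $\sheaf E=T_X$.

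\textbf{Conclusion and main obstacle.} The condition ``$g_{\beta'\beta}$ is an isomorphism'' is open in the moduli of toric deformations: $g_{\beta'\beta}$ is a linear map between fixed one-dimensional spaces (each a quotient of a symmetric power of $W$ by a subspace of constant dimension, by the previous paragraph's count) depending polynomially on the deformation parameters $a_{\rho m}$, and a nonzero linear map stays nonzero nearby; since it is an isomorphism at $T_X$, it is an isomorphism in a neighborhood. I expect the main obstacle to be the combinatorial core of the third paragraph — arranging the sets $S_\rho$ so that $T$ meets no primitive collection of $\Sigma_{\beta'}$, and tracing exactly how nonemptiness of $\X_\beta$ enters (both for the $K_{\beta'}$ and, more delicately, for the degenerate singletons). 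A secondary technical point is the equivalence ``$g_{\beta'\beta}$ nonzero $\iff R_{\beta'\beta}\notin SR(\X_{\beta'},\sheaf E_{\beta'})$'', which rests on the Frobenius/socle structure of the polymology of $(\X_{\beta'},\sheaf E_{\beta'})$, on Lemma~\ref{lem:domsr}, and on the constancy of the relevant graded dimensions that legitimizes the final openness step.
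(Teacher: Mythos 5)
Your proof is correct and follows essentially the same route as the paper: part i) by the same degree count $\deg R_{\beta'\beta}=n_{\beta'}-n_\beta$, and part ii) by reducing, via Poincar\'e/Serre duality for the polymology of $(\X_{\beta'},\sheaf E_{\beta'})$, to showing $R_{\beta'\beta}\notin SR(\X_{\beta'},\sheaf E_{\beta'})$, verifying this at $\sheaf E=T_X$, and spreading out by an openness/semicontinuity argument. The only (minor) difference is in the $T_X$ verification: you exhibit $R_{\beta'\beta}$ directly as a squarefree monomial in the $[D_{\rho_i}]$ whose support spans a cone of $\Sigma_{\beta'}$, whereas the paper multiplies the point class of $\X_\beta$ by the monomial of added variables and observes that the product is the point class of $\X_{\beta'}$ --- both arguments turn on the same use of the nonemptiness of $\X_\beta$ to avoid the primitive collections $K_{\beta'}$ and the degenerate singletons.
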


It follows immediately from Lemma~\ref{lem:direct} that the direct limit
\[
H^*:=\lim_{\stackrel{\longrightarrow}{g}}H^{n_\beta}_{\sheaf E_\beta}
\]
is a 1-dimensional vector space,
and the induced maps $i_\beta:H^*_\beta\to H^*$ are isomorphisms.  
The correlation functions will all take values in $H^*$.  

\begin{proof}[Proof of Lemma \ref{lem:direct}]
The emptiness of $\X_\beta$ is equivalent to $D_c\cdot\beta<0$ for all $c$
that are part of some fixed primitive collection $K$, by the definition of the
primitive collections for $\X_\beta$.

For i), we just have to show that the cohomology degrees are compatible 
with $g_{\beta'\beta}$.  

Noting that $Q_c$ has degree $|c|$, the number of divisors $D_\rho$ in the
corresponding linear equivalence class, we see that
\[
\begin{array}{ccl}
\deg R_{\beta'\beta}&=&\sum_c |c|\left(h^0(d_c^{\beta'})-h^0(d_c^{\beta})\right)\\
&=&\sum_{\rho}\left(h^0(d_\rho^{\beta'})-
h^0(d_\rho^{\beta})\right).
\end{array}
\]
Thus we must show
\begin{equation}
n_{\beta'}=n_\beta+\sum_{\rho}\left(h^0(d_\rho^{\beta'})-h^0(d_\rho^{\beta})\right).
\label{eq:dimident}
\end{equation}
Before computing $n_\beta=\dim \X_\beta$ we note that for a general toric variety $X$ we have
\[
\dim \X=|\Sigma(1)|-h^2(\X)=\left(\sum_{\rho}1\right)-h^2(X),
\]
as follows from the quotient description (\ref{eq:quotientdescription}) and $\dim G = \mathrm{rank}(Pic(\X))
=h^2(\X)$.
Applying the same calculation to $\X_\beta$, we count the edges in $\Sigma_\beta(1)$ and recall that
$h^2(\X_\beta)=h^2(X)$ to conclude
\begin{equation}
n_\beta=\dim \X_\beta  
= \left(\sum_{\rho}h^0(d_\rho^\beta)\right)-h^2(X),
\label{eq:nb}
\end{equation}
and (\ref{eq:dimident}) follows immediately.

For ii), we just have to show that the map $g_{\beta'\beta}$ of one-dimensional vector spaces is nonzero.

{\em Claim:\/} If $R_{\beta'\beta}\not\in SR(\X_{\beta'},{\sheaf E_{\beta'}})$, 
then $g_{\beta'\beta}$ is an isomorphism.

To justify the claim, the hypotheses can be restated as saying that
$[R_{\beta'\beta}]$ is nonzero in $H^*_{\sheaf E_{\beta'}}$.  Since $\X_{\beta'}$ is smooth and $\sheaf E_{\beta'}$ is locally free,
$H^*_{\sheaf E_{\beta'}}$ 
satisfies
Poincar\'e duality.  Thus, there exists an element $p\in\mathrm{Sym}^* W$ such that
$[pR_{\beta'\beta}]$ is a nonzero element of $H^{n_{\beta'}}_{\sheaf E_{\beta'}}$.
Thus $g_{\beta'\beta}(p)\ne 0$, justifying the claim.

It therefore suffices to show that $R_{\beta'\beta}\not\in SR(\X_{\beta'},\sheaf E_{\beta'})$.
We do this by first verifying it for $\sheaf E = TX$.
Once we show that, we
have proven the first part of the lemma in a neighborhood of $TX$
by the closedness of the ideal membership condition.

We now assume that $\sheaf E = T_X$ and identify the polymology of $\sheaf E$ with the cohomology of $X$.
We will show that $g_{\beta'\beta}$ applied to the cohomology class of a point
of $\X_\beta$ is the cohomology class of a point
of $\X_{\beta'}$.\footnote{This matching of point classes appeared in the
context of ordinary cohomology in \cite{Morrison:1994fr}, and was part of
our motivation for the definition of $R_{\beta'\beta}$.}  Since the class of a point is nontrivial, it follows that
$g_{\beta'\beta}$ is nonzero.  
In this case, it is easy to see that $Q_c=[\prod_{\rho\in c} x_\rho]$ by looking at the components of
the toric Euler sequence (\ref{eq:Tpresentation}).

Recall that $S=\CC[x_\rho \; |\; \rho \in \Sigma(1)]$ is 
the homogeneous coordinate ring of $X$.  We have the 
ring homorphism $S\to \mathrm{Sym}^*W$ defined by 
taking $x_\rho$ to the cohomology class of $D_\rho$.  Starting with a polynomial $p$, we can take its image in $\mathrm{Sym}^* W$ and then take a further
quotient by SR$(\X)$
to get a cohomology class $[p]\in H^*(X)$. 
We also let $S_\beta$ be the homogeneous
coordinate ring of $\X_\beta$, with a similar map to $\mathrm{Sym}^*W$ and we use
the notation $[p]_\beta$ for the image of a polynomial in $H^*(\X_\beta)$.

If $\beta'$ dominates $\beta$, then
$S_\beta$ can be regarded as a subring of $S_{\beta'}$ in a natural way as
follows.  Recall that $S_\beta$ is generated by the variables 
$x_{\rho_1},\ldots,x_{\rho_{d^\beta_\rho}}$ while $S_{\beta'}$ is generated by the 
variables $x_{\rho_1},\ldots,x_{\rho_{d^{\beta'}_\rho}}$ .  The dominance 
assumption implies that $d^{\beta'}_\rho\ge d^{\beta}_\rho$ for each $\rho$,
so the shared nomenclature of the variables provides a natural 
embedding of $S_\beta$ in $S_{\beta'}$.
Furthermore $R_{\beta'\beta}$ is simply
the image in $\mathrm{Sym}W$ of the product 
\[
m=\prod_{\rho} \prod_{i=d^\beta_\rho+1}^{d^{\beta'}_\rho}x_{\rho_i}
\]
corresponding
to the additional edges 
added in going from $\X_\beta$ to $\X_{\beta'}$.  

When we multiply $R_{\beta'\beta}$ as represented by $m$ by the class of a 
point represented by a product of variables in $S_\beta$ 
corresponding to a maximal cone $\sigma$ of $\X_\beta$,
it is clear that multiplication by $m$ corresponds to 
simply appending the additional 
edges $\rho_{d^\beta_\rho+1},\ldots,\rho_{d^{\beta'}_\rho}$ needed to complete
$\sigma$ to a maximal cone of $\X_{\beta'}$.  Since the resulting 
monomial represents the class of a point, a nonzero cohomology class,
the resulting monomial in $S_{\beta'}$ cannot be in the Stanley-Reisner
ideal.

Part of the above argument appeared in \cite{Morrison:1994fr}.
\end{proof}

\subsection{Correlation Functions.}  In this section, we will define
the correlation functions.  In the half-twisted GLSM, correlation functions in
sector $\beta$ can be nonzero only if the operators have degree
$c_1(X)\cdot\beta +\dim(X)$.  Since
$c_1(X)=\sum_{\rho}D_\rho$, it follows that
\[
c_1(X)\cdot \beta=\sum_{\rho}d_\rho^\beta, 
\]
hence
\begin{equation}
c_1(X)\cdot\beta +\dim(X)=\sum_{\rho}\left(d_\rho^\beta+1\right)-h^2(X), 
\label{eq:virdim}
\end{equation}
The formula (\ref{eq:virdim}) plays the role of the virtual or expected dimension of Gromov-Witten theory.

Note that (\ref{eq:nb}) and (\ref{eq:virdim}) differ only in that 
$h^0(d_\rho^\beta)$ in
(\ref{eq:nb}) is replaced by $d_\rho^\beta+1$ in (\ref{eq:virdim}).  These are in fact equal, unless
$d_\rho^\beta\le -2$ for some $\rho$.  Such a situation is the analogue
of excess dimension in Gromov-Witten theory.  In our situation, we have both
the excess dimension of $\X_\beta$ and the excess rank of $\sheaf E_\beta$.

In this case, to compensate, we need something to play the role of the obstruction classes of
Gromov-Witten theory.  These were called four-fermi terms in \cite{Katz:2004nn} because of how
they arose in the path integral.  

Define \( h^1(x)=h^1(\sheaf O_{\PP^1}(x)) \)
in analogy with Definition~\ref{def:h0def}.
Then our formula for the four-fermi terms is
\begin{equation}
F_\beta=\prod_c Q_c^{h^1(d_c^\beta)},
\label{eq:fourfermi}
\end{equation}
where the product is over all linear equivalence classes $c$ of the 
divisors $D_\rho$.

At last, we can define the correlation functions in sector $\beta$.
Let
\[
p\in\Sym^{c_1(X)\cdot\beta+\dim X}W.
\]  
A simple computation of degree shows that $pF_\beta\in H^{n_\beta}_{\sheaf E_\beta}$:
the degree of $pF_\beta$ is 
\[
\begin{split}
   \text{deg}(p F_\beta) &= c_1(X)\cdot\beta+\dim X+\sum_c|c|h^1(d_c^\beta)\\
&=\sum_{\rho}\left(d_\rho^\beta+1\right)-h^2(X)+\sum_{\rho}h^1(d_\rho^\beta)\\ 
&=\sum_{\rho}h^0(d_\rho^\beta)-h^2(X) \\
&=n_\beta.
\end{split}
\]
In the third line of the above computation, we have used the identity
\begin{equation}
h^0(d_\rho^\beta)-h^1(d_\rho^\beta)=d_\rho^\beta+1,
\label{eq:rr}
\end{equation}
which is Riemann Roch for $\PP^1$.  

Finally, the correlation function is defined as
\begin{equation}
\langle p\rangle_\beta = i_\beta\left(p\, F_\beta\right)\in H^*,
\label{eq:cordef}
\end{equation}
where $i_\beta$ was defined immediately following the statement of 
Lemma~\ref{lem:direct}.
Following our discussion at the beginning of this section, if
$p\in \Sym^dW$ with $d\ne c_1(X)\cdot \beta+\dim X$, we define $\langle p\rangle_\beta=0$.
By design, all correlation functions live in the same one-dimensional
vector space $H^*$, so can be added over $\beta$.  To formalize the sum,
we recall one version of the Novikov ring.

\begin{definition}
The {\em Novikov ring\/} $\CC[ q^\beta]$ of $\X$ is the ring
generated over $\CC$ by the formal expressions
$q^\beta$ for each $\beta\in\mori \X$, subject to the relations 
$q^\beta q^{\beta'}=q^{\beta+\beta'}$.
\end{definition}

For any $p\in\Sym^*W$ we define the correlation function
\[
\langle p\rangle=\sum_\beta\langle p\rangle_\beta q^\beta\in\Sym^*W\otimes
\CC[ q^\beta].
\]

\subsection{Quantum Cohomology Ring}
In the quasi-topological sector of the 
half-twisted GLSM, as in all quantum field theories where correlation
functions are independent of the insertion point, an operator
$\sheaf O$ is the trivial operator iff
\[\langle \sheaf O,\sheaf O_1,\ldots,
\sheaf O_k\rangle=0
\] 
for all operators $\sheaf O_1,\ldots,
\sheaf O_k$.  Accordingly, the trivial operators with coefficients in the 
Novikov ring form an ideal in the
ring $\Sym^*W\otimes \CC[ q^\beta]$, which we suggestively call
the {\em quantum Stanley-Reisner ideal\/} and write as 
$QSR(X,\sheaf E)$.

\begin{definition}
The {\em quantum sheaf cohomology\/} of $(X,\sheaf E)$ is the ring
\[
QH^*_{\sheaf E}(X)=\left(\Sym^*W\otimes\CC[ q^\beta]\right)/
QSR(X,\sheaf E).
\]
\label{def:quantumsr}
\end{definition}

By definition, if we set all $q^\beta=0$, the correlation functions
become the classical correlation functions described in 
Section~\ref{sec:polymology} and the quantum Stanley-Reisner ideal
$QSR(X,\sheaf E)$ becomes the ordinary Stanley-Reisner ideal
$SR(X,\sheaf E)$.  Thus, $QSR(X,\sheaf E)$ is a deformation of
$SR(X,\sheaf E)$.  Accordingly, we expect $QSR(X,\sheaf E)$ to be generated
by deformations of the generators $Q_K$ of $SR(X,\sheaf E)$.  In fact,
passing to the localization to make the comparison, the relations in 
\cite{McOrist:2008ji} are of just 
this form.  We will view these as predictions for the generators in our 
set-up and then prove that they are correct.

Fixing a primitive collection $K$, we rewrite (\ref{eq:primrel}) as
\begin{equation}
\label{aprimrel}
\sum a_\rho v_\rho=0,
\end{equation}  
where $a_\rho=1$ for each $\rho\in K$.
Then there exists a unique $\beta_K\in H_2(X,\ZZ)$ such that
\[
d_\rho^{\beta_K}=D_\rho\cdot\beta_K=a_\rho\qquad\forall\rho.
\]
Furthermore, the $\beta_K$ generate the cone of effective curves.  Details of
these assertions can be found in \cite{Cox:2010tv}.

Then there is proposed a relation
\begin{equation}
\label{eq:qcrelation}
\prod_{c\in [K]}
Q_c
=q^{\beta_K}\prod_{c\in [K^-]}
Q_c^{-d_c^{\beta_K}}
\end{equation}
Since the left hand side of (\ref{eq:qcrelation}) is just $Q_K$,
(\ref{eq:qcrelation}) says that the quantities
\[
Q_K-q^{\beta_K}\prod_{c\in [K^-]}
Q_c^{-d_c^{\beta_K}}
\] 
are in $QSR(X,\sheaf E)$ and specialize to the 
generator $Q_K$ of $SR(X,\sheaf E)$ when the $q^\beta$ are set to 0.

By definition,
(\ref{eq:qcrelation}) is equivalent to the identity of correlation functions
\begin{equation}
\langle Y\prod_{c\in [K]}
Q_c\rangle_{\beta+
\beta_K}=
\langle Y \prod_{c\in [K^-]}
Q_c^{-d_c^{\beta_K}}\rangle_\beta
\label{eq:corrident}
\end{equation}
for any $Y\in \mathrm{Sym}^*W$ and $\beta\in H_2(X,\ZZ)$.  

\begin{theorem}
The quantum cohomology relations (\ref{eq:qcrelation})
hold for all primitive collections $K$.
\label{thm:qcrelations}
\end{theorem}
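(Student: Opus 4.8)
The plan is to unwind the definition of $QSR(X,\sheaf E)$ so that the asserted relation becomes the family of identities (\ref{eq:corrident}) among correlation functions, and then to verify each of those by a direct comparison of monomials inside $\Sym^* W$ after passing to a common dominating class. Expanding the definitions of $QSR(X,\sheaf E)$ and of $\langle\,\cdot\,\rangle=\sum_\beta\langle\,\cdot\,\rangle_\beta q^\beta$ and matching coefficients of $q^\beta$, the relation (\ref{eq:qcrelation}) is equivalent to
\[
\langle Y\prod_{c\in[K]}Q_c\rangle_{\beta+\beta_K}=\langle Y\prod_{c\in[K^-]}Q_c^{-d_c^{\beta_K}}\rangle_\beta
\]
for every $Y\in\Sym^* W$ and every $\beta\in H_2(X,\ZZ)$. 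From (\ref{aprimrel}) one records that $d_c^{\beta_K}$ equals $1$ on the classes $c\in[K]$, equals $-c_i<0$ on the class $c\in[K^-]$ of the ray $w_i$, and equals $0$ on all remaining classes, and that $[K]$, $[K^-]$ and the remaining classes are pairwise disjoint. If $\deg Y$ does not make both sides potentially nonzero, both vanish by convention and there is nothing to prove (the two degree constraints on $Y$ coincide, since $c_1(X)\cdot\beta_K=\deg Q_K-\deg\prod_{c\in[K^-]}Q_c^{-d_c^{\beta_K}}$ by (\ref{aprimrel})). If $\beta$ is not effective while $\beta+\beta_K$ is, then, $\beta_K$ generating an extremal ray of the Mori cone, both insertions contain the factor $\prod_{c\in[K]}Q_c=Q_K\in SR(X,\sheaf E)$ (Definition~\ref{def:polymologyStanleyReisner}) and both sides vanish; likewise if $X_\beta$ or $X_{\beta+\beta_K}$ is empty both sides vanish by the emptiness analysis in the proof of Lemma~\ref{lem:direct}. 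So we may assume $\beta$ and $\beta+\beta_K$ are effective with $X_\beta$ and $X_{\beta+\beta_K}$ nonempty, and $Y$ of the correct degree.

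Recall $\langle p\rangle_\gamma=i_\gamma(pF_\gamma)$ with $F_\gamma=\prod_c Q_c^{h^1(d_c^\gamma)}$, and choose $\beta'$ dominating both $\beta$ and $\beta+\beta_K$ — for instance $\beta'=\beta+\beta_K+n\beta_3$ with $\beta_3$ meeting every $D_c$ positively and $n\gg0$, which keeps $X_{\beta'}$ nonempty. Since $i_\gamma=i_{\beta'}\circ f_{\beta'\gamma}$ and $f_{\beta'\gamma}\colon H^*_{\sheaf E_\gamma}(X_\gamma)\to H^*_{\sheaf E_{\beta'}}(X_{\beta'})$ is the map induced by multiplication by $R_{\beta'\gamma}=\prod_c Q_c^{h^0(d_c^{\beta'})-h^0(d_c^\gamma)}$, the identity follows once we establish the equality of honest polynomials
\[
R_{\beta',\beta+\beta_K}\, Y\prod_{c\in[K]}Q_c\, F_{\beta+\beta_K}=R_{\beta'\beta}\, Y\prod_{c\in[K^-]}Q_c^{-d_c^{\beta_K}}\, F_\beta
\]
in $\Sym^* W$ itself. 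Each side is $Y$ times a monomial in the $Q_c$, so it suffices to compare, for each class $c$, the exponent of $Q_c$. Writing $\varepsilon_c$ for the net contribution of the two insertion factors — $\varepsilon_c=1$ for $c\in[K]$, $\varepsilon_c=d_c^{\beta_K}$ for $c\in[K^-]$, and $\varepsilon_c=0$ otherwise — the exponent of $Q_c$ on the left minus that on the right is
\[
\bigl(h^0(d_c^{\beta})-h^1(d_c^{\beta})\bigr)-\bigl(h^0(d_c^{\beta+\beta_K})-h^1(d_c^{\beta+\beta_K})\bigr)+\varepsilon_c .
\]
By Riemann--Roch on $\PP^1$, equation (\ref{eq:rr}), $h^0-h^1$ of a line bundle equals its degree plus one, so the bracketed difference is $d_c^{\beta}-d_c^{\beta+\beta_K}=-d_c^{\beta_K}$; and by the recorded values $\varepsilon_c=d_c^{\beta_K}$ for every class $c$ (both are $1$ on $[K]$, both are $-c_i$ on $[K^-]$, both are $0$ elsewhere). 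Hence the exponent difference is $-d_c^{\beta_K}+d_c^{\beta_K}=0$ for all $c$, the two polynomials coincide, their images in $H^*_{\sheaf E_{\beta'}}(X_{\beta'})$ agree, and applying $i_{\beta'}$ gives (\ref{eq:corrident}); thus (\ref{eq:qcrelation}) holds for every primitive collection $K$.

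All the real content here has already been built: the direct system of polymologies, the dimension identity (\ref{eq:dimident}) guaranteeing the degrees match, and the explicit formulas for $R_{\beta'\beta}$ and $F_\beta$. What remains is the bookkeeping of $Q_c$-exponents together with the two bookkeeping subtleties — choosing a single class $\beta'$ dominating both sectors at once, and disposing of the degenerate sectors (empty moduli spaces, non-effective $\beta$, wrong-degree $Y$), all of which collapse to the already-established fact $Q_K\in SR(X,\sheaf E)$. I do not expect any genuine obstacle beyond these routine verifications; the only place one must be slightly attentive is checking that $\varepsilon_c$ really coincides with $d_c^{\beta_K}$ on the nose across all three classes of edge.
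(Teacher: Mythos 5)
Your argument is correct and is essentially the paper's own proof: reduce (\ref{eq:qcrelation}) to the correlation identity (\ref{eq:corrident}), pass to a common dominating class $\beta'$, and verify the resulting monomial identity in $\Sym^*W$ by matching the exponent of each $Q_c$ via Riemann--Roch on $\PP^1$. The only differences are cosmetic --- you unify the exponent check through the single observation $\varepsilon_c=d_c^{\beta_K}$ where the paper splits into the three cases $d_c^{\beta_K}>0$, $=0$, $<0$, and you append some edge-case remarks (non-effective $\beta$, empty moduli spaces) that the paper does not discuss and that are not needed once the identity is established at the level of polynomials.
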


\begin{proof}
We show (\ref{eq:corrident}) for any $Y\in \Sym^*W$.
Choosing $\beta'$ dominating both $\beta$ and $\beta+\beta_K$, we have to
show the equality  
\begin{equation}
R_{\beta',\beta+\beta_K}F_{\beta+\beta_K}Y\prod_{c\in K}
Q_c=R_{\beta'\beta} F_\beta Y
\prod_{c\in [K^-]}
Q_c^{-D_c\cdot\beta_K}
\label{eq:qcrelcor}
\end{equation}
as elements of the quotient ring
\[
\Sym^*W/SR(\X_{\beta'},\sheaf E_{\beta'}).
\]
In fact, we will see that (\ref{eq:qcrelcor}) holds in $\Sym^*W$.
For this it suffices to show
\begin{equation}
R_{\beta',\beta+\beta_K}F_{\beta+\beta_K}\prod_{c\in K}
Q_c=R_{\beta'\beta} F_\beta 
\prod_{c\in [K^-]}
Q_c^{-D_c\cdot\beta_K}
\label{eq:corinbp}
\end{equation}

Both sides of (\ref{eq:corinbp}) expand to products of powers of the $Q_c$,
so we just have to check the exponents of each $Q_c$.  
We break this up into three cases,
according to whether $d_c^{\beta_K}$ is
positive, negative, or zero, or equivalently, $c\in [K],\ c\in [K^-]$,
or $c$ in neither $[K]$ nor $[K^-]$.  In any case, we note that
\begin{equation}
d_c^{\beta+\beta_K}=d_c^\beta+d_c^{\beta_K}
\label{eq:dclinear}
\end{equation}
If $d_c^{\beta_K}=0$, then the required equality of exponents is
\[
h^0(d_c^{\beta'})-h^0(d_c^{\beta+\beta_K})+h^1(d_c^{\beta+\beta_K})=
h^0(d_c^{\beta'})-h^0(d_c^{\beta})+h^1(d_c^{\beta}).
\]
However, in this case, 
$d_c^\beta=d_c^{\beta+\beta_K}$ by
(\ref{eq:dclinear}) and equality is clear.

If $d_c^{\beta_K}>0$ then
$c\in [K]$ and
the required equality of exponents is
\[
h^0(d_c^{\beta'})-h^0(d_c^{\beta+\beta_K})+
h^1(d_c^{\beta+\beta_K})+1=
h^0(d_c^{\beta'})-h^0(d_c^{\beta})+h^1(d_c^{\beta}).
\]
The equality follows immediately from (\ref{eq:dclinear}), $d_c^\beta=1$,
and two applications of (\ref{eq:rr}) (one time 
with $\beta$ replaced by $\beta+\beta_K$).

Finally, if $d_c^{\beta_K}<0$, then $c\in [K^-]$ and we have to show
\[
h^0(d_c^{\beta'})-h^0(d_c^{\beta+\beta_K})+h^1(d_c^{\beta+\beta_K})=
h^0(d_c^{\beta'})-h^0(d_c^{\beta})+h^1(d_c^{\beta})-d_c^{\beta_K},
\]
which is easily verified in the same way.
\end{proof}

\bigskip From physics, we expect this result to generalize to complete
intersections in toric varieties.  Suppose that $Y\subset X$ is a
complete intersection in a toric variety $X$.  Then the tangent bundle
of $Y$ is the cohomology of a monad.  The cohomology of a 
small deformation of this
monad will be a vector bundle $\sheaf E$ on $Y$.  Then $\mathrm{Sym}^*W$
generates a subalgebra of the polymology of $\sheaf E$, which we call 
the \emph{toric polymology} of $\sheaf E$.  We write
\begin{equation}
H^*_{\sheaf E}(X)^{\mathrm{toric}}=\left(\mathrm{Sym}^*W\right)/
\mathrm{SR}(X,\sheaf E),
\label{eq:srci}
\end{equation}
where for present purposes $\mathrm{SR}(X,\sheaf E)$ is defined by 
(\ref{eq:srci}).

\begin{conjecture}
There is a \emph{toric quantum sheaf cohomology ring} 
$QH^*_{\sheaf E}(X)^{\mathrm{toric}}$ which is of the form
\[
QH^*_{\sheaf E}(X)^{\mathrm{toric}}=\left(\mathrm{Sym}^*W\right)/
\mathrm{QSR}(X,\sheaf E),
\]
where $\mathrm{QSR}(X,\sheaf E)$ specializes to $\mathrm{SR}(X,\sheaf E)$
after setting all the $q^\beta$ to zero.
\end{conjecture}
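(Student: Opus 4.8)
The plan is to unwind the definitions so that the transcendental-looking relation \eqref{eq:qcrelation} in $\Sym^*W\otimes\CC[q^\beta]$ collapses to a purely combinatorial identity among monomials in the $Q_c$ inside one ordinary ring. As already observed, \eqref{eq:qcrelation} is equivalent to the family of correlation-function identities \eqref{eq:corrident} indexed by $Y\in\Sym^*W$ and $\beta\in H_2(X,\ZZ)$, so it is enough to establish these.

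First I would fix $Y$ and $\beta$ and pick a class $\beta'$ dominating both $\beta$ and $\beta+\beta_K$; such a $\beta'$ exists by the cofinality argument given just after Lemma~\ref{lem:direct}. Using the definition $\langle p\rangle_\gamma=i_\gamma(pF_\gamma)$ together with the factorization $i_\gamma=i_{\beta'}\circ g_{\beta'\gamma}$ of the direct-limit maps, where $g_{\beta'\gamma}$ is multiplication by $R_{\beta'\gamma}$, and the isomorphism property of $i_{\beta'}$ (Lemma~\ref{lem:direct}), the identity \eqref{eq:corrident} is transported into the single ring $\Sym^*W/SR(\X_{\beta'},\sheaf E_{\beta'})$, where it becomes \eqref{eq:qcrelcor}. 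I would then note that it suffices to prove the lift \eqref{eq:corinbp} of this equation in $\Sym^*W$ itself, after cancelling the common factor $Y$; this makes the remaining argument unconditional and independent of any ideal-membership subtleties.

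The last step is bookkeeping. Both sides of \eqref{eq:corinbp} are products of integer powers of the $Q_c$, assembled from $R_{\beta'\gamma}$, the four-fermi terms $F_\gamma$, and the explicit factors in \eqref{eq:qcrelation}, so one only has to match the exponent of each $Q_c$ (the contributions $h^0(d_c^{\beta'})$ from the two $R$-factors cancel at the outset). The three inputs are: the defining property $d_\rho^{\beta_K}=a_\rho$ of $\beta_K$ from \eqref{aprimrel}, which gives $d_c^{\beta_K}=1$ for $c\in[K]$, $d_c^{\beta_K}<0$ for $c\in[K^-]$, and $d_c^{\beta_K}=0$ otherwise; the linearity $d_c^{\beta+\beta_K}=d_c^\beta+d_c^{\beta_K}$; and Riemann--Roch on $\PP^1$ in the form $h^0(x)-h^1(x)=x+1$, i.e.\ \eqref{eq:rr}. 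Splitting according to the sign of $d_c^{\beta_K}$, each case reduces to an elementary identity among $h^0$'s and $h^1$'s: the extra $+1$ present when $c\in[K]$ is absorbed by one use of Riemann--Roch, and the extra $-d_c^{\beta_K}$ present when $c\in[K^-]$ is absorbed by the linearity relation.

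I do not expect a genuine obstacle here; the real work has been done upstream in constructing the four-fermi term $F_\beta$ \eqref{eq:fourfermi}, the transition classes $R_{\beta'\beta}$ \eqref{def:r}, and the direct system of Section~\ref{sec:direct}, together with Lemma~\ref{lem:direct}. The one point needing care is the very first reduction: the two correlation functions a priori live in different one-dimensional spaces $H^{n_\beta}_{\sheaf E_\beta}$ and $H^{n_{\beta+\beta_K}}_{\sheaf E_{\beta+\beta_K}}$, and comparing them by pushforward to $H^{n_{\beta'}}_{\sheaf E_{\beta'}}$ is legitimate only because $\deg R_{\beta'\gamma}$ is exactly the dimension shift $n_{\beta'}-n_\gamma$ for both $\gamma=\beta$ and $\gamma=\beta+\beta_K$ --- which is part i) of Lemma~\ref{lem:direct}, hence already available.
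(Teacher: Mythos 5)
There is a genuine gap here, and it is one of scope: what you have written out is, essentially verbatim, the paper's proof of Theorem~\ref{thm:qcrelations}, which establishes the relations (\ref{eq:qcrelation}) when the variety in question is the ambient toric variety itself. But the statement you were asked to prove is the Conjecture, which concerns a complete intersection $Y\subset X$ whose deformed tangent bundle $\sheaf E$ is the cohomology of a deformed monad on $Y$, and whose \emph{toric polymology} $H^*_{\sheaf E}(X)^{\mathrm{toric}}=\Sym^*W/\mathrm{SR}(X,\sheaf E)$ is only a subalgebra of the full polymology of $\sheaf E$ on $Y$. The paper explicitly flags this as open and notes that only the special case $Y=X$ has been proven --- that special case being exactly Theorem~\ref{thm:qcrelations}, whose proof you have reproduced. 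Your argument never engages with $Y$, the monad, or the restriction to maps landing in $Y$.

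Concretely, every ingredient you invoke --- the GLSM moduli space $X_\beta$ as a smooth projective toric variety, the induced bundle $\sheaf E_\beta$ built by substituting $x_\rho=f_\rho(t)$ into the components $E_\rho$, the four-fermi terms $F_\beta$ of (\ref{eq:fourfermi}), the transition elements $R_{\beta'\beta}$ of (\ref{def:r}), and the direct system of Lemma~\ref{lem:direct} --- is constructed in the paper only for toric deformations of $T_X$ on the toric variety $X$. For $Y\subsetneq X$ none of this machinery exists yet: one must decide on a compactification of the space of maps $\PP^1\to Y$ of class $\beta$ (which is no longer toric), define the induced sheaf and an analogue of the four-fermi/obstruction classes there, show the top-degree pieces still assemble into a direct system with one-dimensional limit, and only then attempt an exponent-matching computation. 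The combinatorial bookkeeping with $h^0$, $h^1$, and Riemann--Roch on $\PP^1$ that closes your argument is the easy part and is specific to the toric case; it does not transfer. So the proposal proves the already-established special case and leaves the actual content of the Conjecture untouched.
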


Note that we have proven this conjecture for $Y=X$.  

\section*{Acknowledgements}

We would like to thank Matt Ballard, Jock McOrist, Ilarion Melnikov, and Hal Schenck
for helpful discussions.
RD acknowledges partial support by NSF
grants 0908487 and 0636606.
JG acknowledges support by NSF grant 0636606.
SK acknowledges partial support from NSF grant DMS-05-55678.
ES acknowledges partial support by NSF grants DMS-0705381 and PHY-0755614,
hospitality of the Erwin Schrodinger Institute in Vienna and the University
of Pennsylvania while part of the work was completed.

\newpage
\bibliography{qsctesd}

\end{document}